\DeclareMathAlphabet{\mathpzc}{OT1}{pzc}{m}{it}
\begin{document}

\pagenumbering{arabic}
\title[\hfilneg \hfil The Shrira equation]
{Two dimensional solitary waves in shear flows}

\author[A. Esfahani and  A. Pastor \hfil\hfilneg]
{}

\date{}
\subjclass[2000]{35Q35,  35B65, 35A15, 35B40}
\keywords{Nonlinear PDE; Solitary Wave Solution; Regularity; Decay}

\maketitle
\begin{center}
 {\large \textbf{Amin Esfahani}}\\ {\small School of Mathematics and Computer Science\\
Damghan University, Damghan 36715-364, Iran
\\
E-mail: amin@impa.br, esfahani@du.ac.ir }\\
 \end{center}
\begin{center}
 {\large \textbf{Ademir Pastor}}\\ {\small IMECC--UNICAMP\\
Rua S\'ergio Buarque de Holanda, 651\\ Cidade Universit\'aria,
13083-859, Campinas--SP, Brazil.\\
 E-mail: apastor@ime.unicamp.br}\\
 \end{center}

\begin{abstract}
In this paper we study existence and asymptotic behavior of solitary-wave solutions for the generalized Shrira equation, a  two-dimensional model appearing in shear flows. The method used to show the existence of such special solutions is based on the mountain pass theorem. One of the main difficulties consists in showing the compact embedding of the energy space in the Lesbesgue spaces; this is dealt with interpolation theory. Regularity and decay properties of the solitary waves are also established.
\end{abstract}

\numberwithin{equation}{section}

\newtheorem{theorem}{{Theorem}}[section]
\newtheorem{lemma}[theorem]{{\sc \textbf{Lemma}}}
\newtheorem{definition}[theorem]{{ \textbf{Definition}}}
\newtheorem{proposition}[theorem]{Proposition}
\newtheorem{remark}[theorem]{{\sc \textbf{Remark}}}
\newtheorem{corollary}[theorem]{{\sc \textbf{Corollary}}}
\newcommand{\rr}{{\mathbb R}}
\newcommand{\ee}{{\rm e}}
\newcommand{\ii}{{\rm i}}
\newcommand{\rrt}{{{\mathbb R}^2}}
\newcommand{\dd}{{d}}
\newcommand{\lt}{{L^2(\mathbb{R}^2)}}
\newcommand{\T}{{\mathbb T}}
\newcommand{\C}{{\mathbb C}}
\newcommand{\N}{{\mathbb N}}
\newcommand{\lam}{{\lambda}}
\newcommand{\ff}{\varphi}
\newcommand{\ti}{\widetilde}
\newcommand{\what}{\widehat}
\newcommand{\hd}{D^{1/2}_x}
\newcommand{\nhd}{D^{-1/2}_x}
\newcommand{\om}{\omega}
\newcommand{\dr}{S_r(\xi)}
\newcommand{\eps}{\epsilon}
\newcommand{\dk}{S_k}
\newcommand{\A}{\mathscr{A}}
\newcommand{\h}{\mathscr{H}}
\newcommand{\z}{\mathscr{Z}}
\newcommand{\e}{\mathscr{E}}
\newcommand{\f}{\mathscr{F}}
\newcommand{\fim}{\hfill$\square$\\ \\}

\section{Introduction}

Shear flows appear in natural and engineering environments, and in many physical situations. It is connected with a shear stress in solid mechanics, and with the flow induced by a force in fluid dynamics, for instance. In this context, the evolution of essentially two-dimensional weakly long waves is, usually, described by simplified models using the paraxial approximation.
In \cite{shrira}, Shrira described a model for the propagation of nonlinear long-wave perturbations on the background of a boundary-layer type plane-parallel shear flow without inflection points. Within the model, the amplitude $v$ of the longitudinal velocity of the fluid is governed by the equation
\begin{equation}\label{generalshrira}
v_t+\gamma v_x+\alpha vv_x-\beta Q(v_x)=0,
\end{equation}
where $\alpha$, $\beta$, and $\gamma$ are parameters expressed through a profile of the shear flow and $Q$ is the Cauchy-Hadamard integral transform given by
$$
Q(f)(x,y)=\frac{1}{2\pi}{\rm p.v.}\int_{\rr^2}\frac{f(z,w)}{\big((x-z)^2+(y-w)^2\big)^{3/2}}dzdw.
$$
The model also describes the amplitude of the perturbation of the horizontal
velocity component of a sheared flow of electrons (see \cite{GZ}). For additional applications see also \cite{ass}, \cite{pelinovsky-shrira},  \cite{pe-st}.

When considering nearly one-dimensional waves, in the dimensionless form\-  $u(x',y',t')=\alpha v(x-ct,\sqrt{2}y,\beta t)/2\beta$, equation \eqref{generalshrira} can be reduced to the \textit{so called} Shrira equation (see \cite{pelinovsky-shrira})
\begin{equation}\label{shrira-original}
u_t-\mathscr{H}\Delta u+2uu_x=0,
\end{equation}
where we omitted the primes. Here, $\Delta$ denotes the two-dimensional Laplacian operator and $\mathscr{H}$ is the Hilbert transform defined by
\[
\mathscr{H}u(x,y,t)=\frac{1}{\pi}{\rm p.v.}\int_\rr\frac{u(z,y,t)}{x-z} dz.
\]
In particular, at least from the mathematical viewpoint, equation \eqref{shrira-original} can be seen as a two-dimensional extension of the well-known Benjamin-Ono (BO) equation
\begin{equation}\label{boequation}
u_t-\mathscr{H}\partial_x^2 u+2uu_x=0, \quad x,t\in\rr.
\end{equation}
The study of multi-dimensional extension of BO equation has received considerable attention in recent years (see e.g., \cite{cp1}, \cite{cp2}, \cite{ep-ill}, \cite{EP1}, \cite{EP3}, \cite{epb}, \cite{Latorre}, \cite{maris}, \cite{rb}, and references therein). However,  when a suitable result is available for \eqref{boequation}, is not completely clear how to extend such a result for \eqref{shrira-original} and, in general, it demands extra efforts.

To the best of our knowledge, not so much is known about equations \eqref{generalshrira} and \eqref{shrira-original} and a few papers are available in the current literature.  In particular, numerical results concerned with the instability of one-dimensional solitons of the BO equation were presented in \cite{pelinovsky-shrira} and \cite{GZ}. It is to be observed that these equations presents a strong anisotropic character of the dispersive part, which turns out to be one of the main difficulties to be dealt with.

In this paper we are interested in studying existence, regularity and decay properties of solitary waves for the generalized  two-dimensional Shrira equation, namely,
\begin{equation}\label{main-shrira}
u_t-\mathscr{H}\Delta u+(f(u))_x=0,
\end{equation}
where $u=u(x,y,t)$, $(x,y)\in\rrt$, $t\geq0$, and $f$ is a real-valued continuous function. Observe that if $f(u)=u^2$ then equation \eqref{main-shrira}  reduces to \eqref{shrira-original}.

For one hand, from the physical point of view, a solitary wave is a wave that propagates without changing its profile along the temporal evolution, usually it has one global peak and decays far away from the peak. On the other hand, from the mathematical point of view, a solitary wave is a special solution of a partial differential equation belonging to a very particular function space. Nowadays, existence and properties of solitary waves  have become one of the major issues in PDEs. The interested reader will find a large number of good texts in the current literature, which we refrain from list them at this moment.

The solitary-wave solutions of interest in the context of equation \eqref{main-shrira} have the form $$u(x,y,t)=\ff(x-ct,y),$$ where $c> 0$ is the wave speed and $\ff$ is a real-valued function with a suitable decay at infinity. Substituting this form into \eqref{main-shrira}, it transpires that $\ff$ must satisfy the nonlinear  equation
\begin{equation}\label{shrira-0}
-c\ff_x-\mathscr{H}\Delta\ff+(f(\ff))_x=0,
\end{equation}
where we have replaced the variable $z=x-ct$ by $x$. This last equation can be rewritten in the following form
\begin{equation}\label{shrira}
-c\ff-\mathscr{H}\partial_x^{-1}\Delta\ff+f(\ff)=0.
\end{equation}
Hence, in order to show the existence of solitary waves, it suffices to show that \eqref{shrira-0}, or equivalently \eqref{shrira}, has a solution.

\begin{remark}\label{scalingremark}
Note that the wave speed $c$   can be normalized to $1$  at least if $f$ is homogeneous of degree $p+1$ such as $f(u)=u^{p+1}$. Indeed, the scale change
\begin{equation}  \label{normalized}
\phi(x,y)=a\ff\left(bx,dy\right),
\end{equation}
transforms \eqref{shrira-0} in $\ff$, into the same in $\phi$, but with $c=1$, where $a=c^{-1/p}$ and $b=d=c^{-1}$.
\end{remark}

By multiplying \eqref{shrira} by $\varphi$ and integrating by parts, one sees that the
 natural space to study \eqref{shrira} is
\[
\z=\left\{u\in L^2(\rrt);\;\hd u,\;\nhd u_y\in L^2(\rrt)\right\},
\]
equipped with the norm
\[
\|u\|_\z=\left(\int_\rrt(cu^2+|\hd u|^2+|\nhd u_y|^2)\dd x\;\dd y\right)^{1/2},
\]
where $c>0$ is a fixed constant and $D_x^{\pm1/2}$    denotes   the fractional derivatives operator of order $\pm1/2$ with respect to $x$, defined via Fourier transform by $({D^{\pm1/2}_xu})^{\wedge}(\xi,\eta)=|\xi|^{\pm1/2}\widehat{u}(\xi,\eta)$. Note that $\z$ is a Hilbert space with the  scalar product
\[
(u,v)_\z=\int_\rrt(cuv+\hd u\hd v+\nhd u_y\nhd v_y)dxdy.
\]
Also, $\z$ is an anisotropic space including fractional negative derivatives, which, for one hand, brings many difficulties when applying analytical methods.

The paper is organized as follows. We start Section \ref{sec2} by showing a suitable Gagliardo-Nirenberg-type inequality, which ensures that the space $\z$ is embedded in suitable Lebesgue spaces. In order to show that indeed we have a compact embedding we use interpolation theory. Thus, we are able to see $\z$ as an interpolation space between two compatible pair of Hilbert spaces containing only integer derivatives. With the compact embedding in hand, we  use the mountain pass theorem without the Palais-Smale condition, in order to show the existence of at least one nontrivial solution. Variational characterizations of such a solutions are also provided. In Section \ref{sec3} we study regularity and decay properties of the solitary waves. Our regularity results is based on the so called Lizorkin lemma, which gives a sufficient condition to a function be a Fourier multiplier on $L^p(\rr^n)$. The decay properties are obtained once we write the equation as a convolution equation and get some suitable estimates on the corresponding kernel. Of course, the anisotropic structure of the kernel also brings many technical difficulties. Finally, in Section \ref{sec4} we present a nonexistence result of positive solitary waves.

The issue of the orbital stability/instability of solitary waves of \eqref{main-shrira} will be studied in a future work.\\

\noindent {\bf Notation.} Otherwise stated, we follow the standard notation in PDEs. In particular, we use $C$ to denote several positive constants that may vary from line to line. In order to simplify notation in some places where the constant is not important, if $a$ and $b$ are two positive numbers, we use $a\lesssim b$ to mean that there exists a positive constant $C$ such that $a\leq Cb$. By $L^p=L^p(\rrt)$ we denote the standard Lebesgue space. Sometimes we use subscript to indicate which variable we are concerned with; for instance, $L^p_x=L^p_x(\rr)$ means the space $L^p(\rr)$ with respect to the variable $x$; thus given a function $f=f(x,y)$, the notation $\|f\|_{L^p_x}$ means we are taking the $L^p$ norm of $f$ only with respect to $x$. Also, if no confusion is caused, we use $\int_{\rrt}f$ instead of $\int_{\rrt}f(x,y)\dd x\dd y$.

\section{Existence of Solitary Waves}\label{sec2}

In this section we provide the existence of solitary-wave solutions for \eqref{main-shrira}. As we already said, our main tool in doing so will be mountain pass theorem.

\subsection{A Gagliardo-Nirenberg-type inequality and embeddings}

First, we are going to obtain an embedding of the space $\z$, which is  appropriate to study equation \eqref{shrira}. For the sake of simplicity, in this subsection, we assume that the constant $c$ in the definition of $\z$ is normalized to 1.

\begin{lemma}[Gagliardo-Nirenberg-type inequality]\label{embed}
 Assume $0\leq p\leq2$. Then there is a constant $C>0$ (depending only on $p$) such that for any $\ff\in\mathscr{Z}$,
\begin{equation}\label{GN}
\|\ff\|_{L^{p+2}}^{p+2}\leq C\|\ff\|_{L^2}^{2-p}\left\|D_x^{-1/2}\ff_y\right\|_{L^2}^{p/2}\left\|D_x^{1/2}\ff\right\|_{L^2}^{3p/2}.
\end{equation}
As a consequence, it follows that there is a constant $C>0$ such that for all $\ff\in\mathscr{Z}$,
\[
\|\ff\|_{L^{p+2}}\leq C\|\ff\|_{\mathscr{Z}},
\]
which is to say $\mathscr{Z}$ is  continuously embedded in $L^{p+2}$.
\end{lemma}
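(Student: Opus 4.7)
The plan is to prove \eqref{GN} by interpolating between the trivial case $p=0$ (where both sides equal $\|\ff\|_{L^2}^2$) and the $L^4$-endpoint
\[
\|\ff\|_{L^4}^4 \leq C\left\|D_x^{-1/2}\ff_y\right\|_{L^2}\left\|D_x^{1/2}\ff\right\|_{L^2}^3,
\]
corresponding to $p=2$; the continuous embedding will then be a routine consequence of \eqref{GN}.

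To establish the $L^4$-endpoint, I would first fix $y$ and apply the standard one-dimensional Gagliardo-Nirenberg estimate $\|v\|_{L^4(\rr)}^4 \leq C\|v\|_{L^2(\rr)}^2\|D^{1/2}v\|_{L^2(\rr)}^2$ (which follows from the Sobolev embedding $\dot{H}^{1/4}(\rr)\hookrightarrow L^4(\rr)$ combined with interpolation between $\dot{H}^{1/2}(\rr)$ and $L^2(\rr)$) to $\ff(\cdot,y)$, and then integrate in $y$ to obtain
\[
\|\ff\|_{L^4}^4 \leq C\int_\rr \|\ff(\cdot,y)\|_{L^2_x}^2\,\|D_x^{1/2}\ff(\cdot,y)\|_{L^2_x}^2\,dy.
\]
The crucial point is to bound $\sup_y\|\ff(\cdot,y)\|_{L^2_x}^2$ by the anisotropic norm. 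Taking $\ff$ Schwartz by density, the function $g(y):=\|\ff(\cdot,y)\|_{L^2_x}^2$ vanishes at $\pm\infty$, so that $g(y)=2\int_{-\infty}^y\int_\rr \ff\,\ff_s\,dx\,ds$. The key move is the Plancherel identity $\int_\rr \ff\,\ff_s\,dx = \int_\rr (D_x^{1/2}\ff)(D_x^{-1/2}\ff_s)\,dx$, which combined with Cauchy-Schwarz in $(x,s)$ gives
\[
\sup_y\|\ff(\cdot,y)\|_{L^2_x}^2 \leq 2\,\|D_x^{1/2}\ff\|_{L^2}\,\|D_x^{-1/2}\ff_y\|_{L^2},
\]
and pulling this out of the preceding integral yields the $L^4$-endpoint.

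For the general range $0\leq p\leq 2$, H\"older interpolation between $L^2$ and $L^4$ produces $\|\ff\|_{L^{p+2}}^{p+2}\leq \|\ff\|_{L^2}^{2-p}\|\ff\|_{L^4}^{2p}$, and substituting the $L^4$-endpoint bound for $\|\ff\|_{L^4}^{2p}$ gives \eqref{GN} exactly. The continuous embedding is then immediate: each of the three factors on the right-hand side of \eqref{GN} is dominated by $\|\ff\|_{\mathscr{Z}}$, and the exponents $(2-p)+p/2+3p/2=p+2$ sum correctly, so $\|\ff\|_{L^{p+2}}^{p+2}\leq C\|\ff\|_{\mathscr{Z}}^{p+2}$. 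I expect the main obstacle to be the $p=2$ endpoint, and more precisely recognizing the Plancherel rewriting that causes the transverse derivative $\ff_y$ to enter only through the norm $\|D_x^{-1/2}\ff_y\|_{L^2}$ built into $\mathscr{Z}$.
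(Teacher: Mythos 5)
Your proposal is correct and follows essentially the same route as the paper: the heart of both arguments is the identical trace-type bound $\sup_y\|\ff(\cdot,y)\|_{L^2_x}^2\leq 2\|D_x^{1/2}\ff\|_{L^2}\|D_x^{-1/2}\ff_y\|_{L^2}$ obtained by writing $\int\ff\ff_y\,dx=\int D_x^{1/2}\ff\,D_x^{-1/2}\ff_y\,dx$ and applying Cauchy--Schwarz, combined with the one-dimensional Gagliardo--Nirenberg inequality in $x$. The only (cosmetic) difference is that you prove just the $p=2$ endpoint and recover $0\leq p\leq2$ by Lebesgue interpolation between $L^2$ and $L^4$, whereas the paper runs the 1D inequality at exponent $p+2$ directly and then applies H\"older in $y$; both yield \eqref{GN} with the stated exponents.
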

\begin{proof}
It suffices to assume $0<p\leq2$.  The lemma will be established only for $C_0^\infty(\rrt)$ functions; a standard limit method then can be used to complete the proof. By the Gagliardo-Nirenberg inequality (see, for instance, \cite{ABLS} or \cite{hmoli}), there exists $C>0$ such that, for all $g\in H^{1/2}(\rr)$,
\[
\|g\|_{L^{p+2}(\rr)}\leq C\|g\|_{L^2(\rr)}^{\frac{2}{p+2}}\|D_x^{1/2}g\|_{L^2(\rr)}^{\frac{p}{p+2}}.
\]
Assume for the moment that $0<p<2$. Integrating on the $y$ variable, it follows that
\begin{equation}\label{ineq-0}
\begin{split}
\|\ff\|_{L^{p+2}(\rrt)}^{p+2}&\leq C\int_{\rr}\|\ff(\cdot,y)\|_{L^2(\rr)}^2\;\|D_x^{1/2}\ff(\cdot,y)\|_{L^2(\rr)}^p\;\dd y\\
&\leq C \left\|\|\ff(\cdot,y)\|^2_{L^2(\rr)}\right\|_{L^{\frac{2}{2-p}}(\rr)}\;
\|D_x^{1/2}\ff\|_{L^2(\rrt)}^p\\
&\leq C\|\ff\|_{L^2(\rrt)}^{2-p}\;
\sup_{y\in\rr}\|\ff(\cdot,y)\|_{L^2(\rr)}^p\;
\|D_x^{1/2}\ff\|_{L^2(\rrt)}^p.
\end{split}
\end{equation}
We now estimate the middle term in \eqref{ineq-0}.  Fixed $y\in\rr$, from H\"older's inequality, we deduce
\begin{equation}
\begin{split}
\|\ff(\cdot,y)\|_{L^2(\rr)}^2&
=2\int_{\rr}\int_{-\infty}^y\ff(x,z)\ff_y(x,z)\dd z\dd x\\
&=2\int_{-\infty}^y\int_{\rr}\hd\ff(x,z)\nhd\ff_y(x,z)\dd x\dd z\\
&\leq 2\int_\rrt |\hd\ff(x,y)||\nhd\ff_y(x,y)| \dd x\dd y\\
&\leq 2\|\hd\ff\|_{L^2(\rrt)}\|\nhd\ff_y\|_{L^2(\rrt)}.
\end{split}
\end{equation}
As a consequence,
\begin{equation}\label{enq-1}
\sup_{y\in\rr}\|\ff(\cdot,y)\|_{L^2(\rr)}\leq C\|D_x^{1/2}\ff\|_{L^2(\rrt)}^{1/2}\|\nhd\ff_y\|_{L^2(\rrt)}^{1/2}.
\end{equation}
A combination of \eqref{enq-1} with \eqref{ineq-0} yields the first statement if $0<p<2$. For $p=2$, from the first inequality in \eqref{ineq-0} and \eqref{enq-1}, we deduce
\begin{equation*}
\begin{split}
\|\ff\|_{L^{4}(\rrt)}^{4}&\leq C\int_{\rr}\|\ff(\cdot,y)\|_{L^2(\rr)}^2\;\|D_x^{1/2}\ff(\cdot,y)\|_{L^2(\rr)}^2\;\dd y\\
&\leq C
\sup_{y\in\rr}\|\ff(\cdot,y)\|_{L^2(\rr)}^2\;
\|D_x^{1/2}\ff\|_{L^2(\rrt)}^2\\
&\leq C\|\nhd\ff_y\|_{L^2(\rr)}\|\hd\ff_y\|_{L^2(\rr)}^3,
\end{split}
\end{equation*}
which is the desired. The lemma is thus proved.
\end{proof}
\begin{remark}
An argument  similar to that in Lemma \ref{embed} gives the continuous embedding $\z\hookrightarrow L_y^qL_x^r(\rrt)$, for any $q,r\geq2$ satisfying $\frac{1}{q}+\frac{1}{r}\geq\frac{1}{2}$.
\end{remark}

As is well known in the theory of critical point, in order to rule out the trivial solution, a compactness result is usually necessary. Here, we will prove the following.

\begin{lemma}[Compact embedding]\label{complemma}
If $0\leq p<2$ then the embedding $\z\hookrightarrow L^{p+2}_{loc}(\rrt)$ is compact.
\end{lemma}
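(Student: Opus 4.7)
My plan splits the proof into two layers: first establish the compactness of $\z\hookrightarrow L^2_{loc}(\rrt)$, and then bootstrap to $L^{p+2}_{loc}$ via the Gagliardo--Nirenberg inequality of Lemma \ref{embed} combined with a H\"older interpolation between $L^2$ and $L^4$. The restriction $p<2$ should emerge naturally in the second layer.

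For the first layer, my key claim is that $\z$ embeds continuously into $H^{1/2}(\rrt)$, after which Rellich--Kondrachov together with a diagonal argument does the job. In Fourier variables the $\z$-norm corresponds to the weight $w(\xi,\eta)=1+|\xi|+\eta^2/|\xi|$, while the $H^{1/2}$-norm corresponds to $(1+|\xi|^2+|\eta|^2)^{1/2}$. The elementary input is AM--GM,
\[
2|\eta|=2\sqrt{|\xi|\cdot\eta^2/|\xi|}\leq |\xi|+\eta^2/|\xi|,
\]
which gives $(1+|\xi|^2+|\eta|^2)^{1/2}\leq 1+|\xi|+|\eta|\lesssim w(\xi,\eta)$. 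A more structural route, matching the authors' phrasing, is to check that $w^2\sim 1+\xi^2+\eta^4/\xi^2$ and then identify $\z$ with the $\tfrac12$-interpolation space between $L^2(\rrt)$ and the integer-order Hilbert space $\{u\in L^2:\partial_x u,\,\partial_x^{-1}\partial_y^2 u\in L^2\}$, transferring compactness via the general theorem on interpolation of compact operators.

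For the second layer, I would take a bounded sequence $\{\phi_n\}\subset\z$, extract by the previous step a subsequence (still denoted $\phi_n$) converging in $L^2_{loc}(\rrt)$ to some $\phi$, which reflexivity of $\z$ identifies with its weak $\z$-limit. Lemma \ref{embed} at the endpoint $p=2$ provides the uniform bound $\sup_n\|\phi_n\|_{L^4(\rrt)}<\infty$, and for any compact $K\subset\rrt$, H\"older's inequality yields
\[
\|\phi_n-\phi\|_{L^{p+2}(K)}\leq \|\phi_n-\phi\|_{L^2(K)}^{\theta}\,\|\phi_n-\phi\|_{L^4(K)}^{1-\theta},\qquad \theta=\frac{2-p}{p+2}.
\]
For $0\leq p<2$ one has $\theta>0$, so the right-hand side tends to zero and the conclusion follows.

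The hard part will be the first layer: $\z$ involves the \emph{negative}-order operator $\nhd$ and the weight $w$ is singular at $\xi=0$, so at first sight one cannot expect $\z$-boundedness to control any isotropic regularity. The rescue is that this very singularity, via AM--GM, actually forces control of $|\eta|$; the interpolation identification is the clean way to formalize this. The same $L^4$ ceiling in Lemma \ref{embed} pinpoints $p=2$ as genuinely borderline for this method, since when $p=2$ there is no exponent strictly above $p+2$ available to interpolate against.
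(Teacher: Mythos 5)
Your argument is correct, and its key step is genuinely different from the paper's. Your AM--GM observation $2|\eta|=2\sqrt{|\xi|\cdot\eta^2/|\xi|}\leq|\xi|+\eta^2/|\xi|$ does give $(1+\xi^2+\eta^2)^{1/2}\leq 1+|\xi|+|\eta|\lesssim 1+|\xi|+\eta^2/|\xi|$, hence the continuous embedding $\z\hookrightarrow H^{1/2}(\rrt)$, after which compactness into $L^2_{loc}$ is the standard fractional Rellich--Kondrachov theorem; your H\"older interpolation between $L^2(K)$ and the uniform $L^4$ bound of Lemma \ref{embed}, with $\theta=\frac{2-p}{p+2}>0$ precisely when $p<2$, correctly upgrades this to $L^{p+2}_{loc}$. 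The paper performs the same reduction to $L^2_{loc}$ but then argues directly: it extends a bounded sequence to functions supported in a fixed square (this is where the extension operator of Lemma \ref{extesionop} and the interpolation identification $\z=(X^1,L^2)_{1/2}$ enter, to make the extension bounded on $X^{1/2}$), passes to a weak limit, and splits the Fourier plane into $\{|\xi|\geq\rho\}$, $\{|\xi|\leq\rho,|\eta|\geq\rho\}$, and a compact box, controlling the first two regions by $\|\hd\ff_n\|_{L^2}$ and $\|\nhd\partial_y\ff_n\|_{L^2}$ respectively and the box by dominated convergence --- in effect reproving fractional Rellich in this anisotropic setting. Your route is shorter and more elementary for this lemma, and it exposes the stronger fact that $\z$ sits inside an \emph{isotropic} Sobolev space, which the paper's argument does not state; the paper's heavier machinery is not wasted, however, since the extension operator and the interpolation identities are reused later (Lemma \ref{boundemb}, Proposition \ref{prop2.9}, and the vanishing Lemma \ref{cpt}), so you would still need them elsewhere even after simplifying this proof.
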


Due to the anisotropic property of $\z$ involving negative derivatives, some difficulties appear in the proof of Lemma \ref{complemma}. To do so, we will identify $\z$ as an interpolation space by using the real interpolation method.

For any real number $s\geq0$, we introduce the space
\[
X^s:=\Big\{f\in \mathcal{S}'(\rrt); \; (1+|\xi|+|\xi|^{-1}\eta^2)^{s}\widehat{f}\in L^2(\rrt)\Big\}.
\]
The space $X^s$ is a Hilbert spaces endowed with the scalar product
$$
(f,g)_{X^s}:=\int_\rrt(1+|\xi|+|\xi|^{-1}\eta^2)^{2s}\widehat{f}(\xi,\eta)\,\overline{\widehat{g}}(\xi,\eta)\;d\xi d\eta.
$$
In particular, from Plancherel's theorem we have $X^0=L^2$ and $X^{1/2}=\z$. If  $0\leq s_1\leq s_2$ then $X^{s_1}\subset X^{s_2}$. The space $X^1$ is a suitable space that involves only integer derivatives and, moreover, it can be defined as the closure of $\partial_x(C_0^\infty(\rr^2))$ for the norm (see \cite{dbs1})
$$
\|\ff_x\|_{X^1}=\left( \|\ff_x\|_{L^2}^2+\|\ff_{xx}\|_{L^2}^2+\|\ff_{yy}\|_{L^2}^2\right)^{1/2}.
$$
So, our idea is to look $\z$ as an interpolation space between $L^2$ and $X^1$.

In what follows, if $(H_0,H_1)$ is a compatible pair of Hilbert spaces and $\theta\in(0,1)$, we denote by $(H_0,H_1)_{\theta}$ the  space $(H_0,H_1)_{\theta,2}$. Here, for $q\in(1,\infty)$, $(H_0,H_1)_{\theta,q}$  denotes the intermediate space with respect to the couple $(H_0,H_1)$ using either the $J$-method or the $K$-method (see e.g., \cite{bergh}, \cite{mclean}, or \cite{triebel}). For our purposes, the following results will be useful.

\begin{lemma}\label{lemmaintb2}
Let $(X_0,X_1)$ and $(Y_0,Y_1)$ be two compatible pair of Hilbert spaces. Then, for $0<\theta<1$, $((X_0,X_1)_\theta, (Y_0,Y_1)_\theta)$ is a pair of interpolation spaces with respect to $((X_0,X_1), (Y_0,Y_1))$, which is exact of exponent $\theta$.

In particular, if $A$ is a bounded linear operator from $X_0$ to $Y_0$ and from $X_1$ to $Y_1$, then it is also bounded from $(X_0,X_1)_\theta$ to $(Y_0,Y_1)_\theta$.
\end{lemma}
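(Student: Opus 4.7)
The plan is to establish the stated interpolation property directly from the $K$-method definition, since only the $\theta=1/2$, $q=2$ case is needed later (and the argument is uniform in $q$). Throughout, assume $A\colon X_0\to Y_0$ and $A\colon X_1\to Y_1$ are bounded with operator norms $M_0$ and $M_1$ respectively, and recall the $K$-functional
\[
K(t,f;X_0,X_1)=\inf_{f=f_0+f_1}\bigl(\|f_0\|_{X_0}+t\|f_1\|_{X_1}\bigr),\qquad t>0,
\]
so that $\|f\|_{(X_0,X_1)_{\theta,2}}=\bigl(\int_0^\infty (t^{-\theta}K(t,f;X_0,X_1))^2\,\tfrac{dt}{t}\bigr)^{1/2}$.

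First, I would take an arbitrary admissible decomposition $f=f_0+f_1$ with $f_0\in X_0$ and $f_1\in X_1$. Applying $A$ componentwise yields $Af=Af_0+Af_1$ with $Af_0\in Y_0$ and $Af_1\in Y_1$, and the boundedness hypotheses give
\[
\|Af_0\|_{Y_0}+t\|Af_1\|_{Y_1}\leq M_0\|f_0\|_{X_0}+tM_1\|f_1\|_{X_1}=M_0\bigl(\|f_0\|_{X_0}+t\tfrac{M_1}{M_0}\|f_1\|_{X_1}\bigr).
\]
Taking the infimum over all admissible decompositions of $f$ on the right-hand side (which only tightens the bound for $K(t,Af;Y_0,Y_1)$) produces the key estimate
\[
K(t,Af;Y_0,Y_1)\leq M_0\,K\!\left(\tfrac{M_1}{M_0}t,\,f;X_0,X_1\right).
\]

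Next, I would insert this into the $(Y_0,Y_1)_{\theta,2}$-norm and perform the change of variable $s=\tfrac{M_1}{M_0}t$, under which $\tfrac{dt}{t}=\tfrac{ds}{s}$ and $t^{-\theta}=\bigl(\tfrac{M_0}{M_1}\bigr)^{-\theta}s^{-\theta}$. This gives
\[
\|Af\|_{(Y_0,Y_1)_{\theta,2}}\leq M_0\bigl(\tfrac{M_0}{M_1}\bigr)^{-\theta}\|f\|_{(X_0,X_1)_{\theta,2}}=M_0^{1-\theta}M_1^{\theta}\,\|f\|_{(X_0,X_1)_{\theta,2}},
\]
which is precisely the statement that the pair is exact of exponent $\theta$, and in particular that $A$ extends to a bounded operator between the interpolated spaces. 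Since the first paragraph of the lemma is a purely formal consequence of the boundedness just proved (by definition of the interpolation property), this completes the argument.

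I do not anticipate a real obstacle: this is a standard fact for the $K$-method, valid for any compatible couples of Banach spaces, and in the Hilbertian setting used here one could alternatively invoke Bergh--L\"ofstr\"om or Triebel directly. The only mild point of care is bookkeeping of the constants in the change of variable, to make sure the exponent in $M_0^{1-\theta}M_1^{\theta}$ comes out correctly.
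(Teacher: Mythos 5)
Your proof is correct. Note, however, that the paper does not actually prove this lemma: it simply cites \cite[Chapter 1]{triebel} and \cite[Appendix B]{mclean}, since this is the standard interpolation property of the real method. What you have written out is precisely the textbook proof of that cited fact via the $K$-functional: the estimate
\[
K(t,Af;Y_0,Y_1)\leq M_0\,K\!\left(\tfrac{M_1}{M_0}t,\,f;X_0,X_1\right)
\]
followed by the substitution $s=\tfrac{M_1}{M_0}t$ is exactly how one shows the couple is exact of exponent $\theta$, and your constant $M_0^{1-\theta}M_1^{\theta}$ is the right one. Two minor points of hygiene: (i) you should dispose of the degenerate case $M_0=0$ (or $M_1=0$) separately, e.g.\ by replacing $M_0$ with $M_0+\varepsilon$ and letting $\varepsilon\to0$, since you divide by $M_0$; (ii) your argument is phrased for the $K$-method, while the paper allows either the $J$- or $K$-method --- this is harmless because the two are equivalent with equivalent norms (the equivalence theorem), but it is worth a sentence if one wants the ``exact of exponent $\theta$'' claim with the stated normalization. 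Your observation that the argument is uniform in $q$ and works for general Banach couples is accurate; the Hilbert structure plays no role here.
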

\begin{proof}
See, for instance, \cite[Chapter 1]{triebel} or \cite[Appendix B]{mclean}.
\end{proof}

\begin{lemma}\label{chandlemma}
Let $(H_0,H_1)$ be a compatible pair of Hilbert spaces,
let $(X,\mathcal{M},\mu)$ be a measure space and let $\mathcal{Y}$ denote the set of measurable
functions from $X$ to $\C$. Suppose that there exist a linear map $\mathcal{A}:H_0+H_1\to \mathcal{Y}$ and,
for $j= 0, 1$, functions $w_j \in \mathcal{Y}$, with  $w_j > 0$ almost everywhere, such that the
mappings $\mathcal{A}:H_j\to L^2(X, \mathcal{M},w_j\mu)$ are unitary isomorphisms. For $\theta\in(0,1)$, define
$$
H_\theta=\Big\{ \phi\in H_0+H_1; \|\phi\|_{H_\theta}:=\left( \int_X |w_\theta \mathcal{A}\phi|^2\;d\mu  \right)^{1/2}<\infty \Big\},
$$
where $w_\theta=w_0^{1-\theta}w_1^\theta$. Then $H_\theta=(H_0,H_1)_\theta$ with
equality of norms.
\end{lemma}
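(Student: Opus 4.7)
The plan is to use the unitary isomorphism $\mathcal{A}$ to transfer the interpolation question from the abstract pair $(H_0,H_1)$ to a concrete pair of weighted $L^2$ spaces, where the interpolation space is classically identified, and then to pull back via $\mathcal{A}^{-1}$.

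First, the hypothesis that $\mathcal{A}:H_j\to L^2(X,\mathcal{M},w_j\mu)$ is a unitary isomorphism for $j=0,1$ says that $\mathcal{A}$ and $\mathcal{A}^{-1}$ are norm-preserving bounded linear maps between the compatible pairs $(H_0,H_1)$ and $\bigl(L^2(w_0\mu),L^2(w_1\mu)\bigr)$, with operator norms equal to $1$ on each level. Applying Lemma \ref{lemmaintb2}, whose exactness of exponent $\theta$ preserves norm constants, one concludes that $\mathcal{A}$ induces a unitary isomorphism
$$
\mathcal{A}:(H_0,H_1)_\theta \longrightarrow \bigl(L^2(X,w_0\mu),L^2(X,w_1\mu)\bigr)_\theta.
$$

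Second, I would invoke the standard interpolation identity for weighted $L^2$ spaces,
$$
\bigl(L^2(X,w_0\mu),L^2(X,w_1\mu)\bigr)_\theta = L^2(X,w_\theta\mu),\qquad w_\theta = w_0^{1-\theta}w_1^\theta,
$$
with equality of norms under the natural normalization of the $K$-method. This is classical: computing the Hilbertian $K$-functional pointwise on $X$ via the explicit scalar decomposition $g(x) = g_0(x) + g_1(x)$ minimizing $|g_0|^2 w_0(x) + t^2|g_1|^2 w_1(x)$ yields
$$
K_2(t,g)^2 = \int_X \frac{t^2 w_0 w_1}{w_0 + t^2 w_1}\,|g|^2 \, d\mu,
$$
and integrating $t^{-2\theta} K_2(t,g)^2 \, dt/t$ via Fubini, followed by a Beta-function evaluation of the resulting scalar integral, produces precisely the weight $w_\theta = w_0^{1-\theta} w_1^\theta$ (with the universal constant absorbed in the chosen normalization of the $K$-norm).

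Combining the two steps, $\mathcal{A}$ is a unitary isomorphism from $(H_0,H_1)_\theta$ onto $L^2(X,w_\theta\mu)$, and the condition $\mathcal{A}\phi\in L^2(X,w_\theta\mu)$ with the corresponding norm is by design the defining property of $H_\theta$ in the statement. Hence $H_\theta = (H_0,H_1)_\theta$ with equality of norms. The main obstacle is the upgrade in the second step from an a priori equivalence of norms to strict equality; this is normalization-dependent and is why the statement is typically formulated relative to the fixed conventions of a standard reference such as \cite{bergh} or \cite{triebel}. Granting those conventions, the argument reduces to the two transfer steps above.
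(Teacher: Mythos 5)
Your argument is correct and is essentially the proof behind the result the paper invokes: the paper gives no proof of its own but simply cites \cite{chan} (Corollary 3.2), and the argument there is exactly your two-step transfer --- interpolate the unitary map using the exactness in Lemma \ref{lemmaintb2}, then identify $\bigl(L^2(X,w_0\mu),L^2(X,w_1\mu)\bigr)_\theta$ with $L^2(X,w_\theta\mu)$ via the pointwise computation of the Hilbertian $K$-functional. Your closing caveat is the right one: exact equality (rather than mere equivalence) of norms holds only for the quadratic $K_2$-normalization of the real method, which is the convention adopted in \cite{chan} and \cite{mclean}.
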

\begin{proof}
See \cite[Corollary 3.2]{chan}.
\end{proof}

As an application of the above lemma, we have.

\begin{lemma}\label{interplemma}
The space $\z$ is such that
$$
\z=X^{1/2}=(X^1,L^2)_{1/2}.
$$
\end{lemma}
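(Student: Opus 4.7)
The plan is to prove the two equalities $\mathscr{Z}=X^{1/2}$ and $X^{1/2}=(X^1,L^2)_{1/2}$ separately. The first equality is a direct Plancherel computation on the Fourier side, and the second is a direct application of Lemma \ref{chandlemma} with the Fourier transform playing the role of the unitary map $\mathcal{A}$.

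For the identification $\mathscr{Z}=X^{1/2}$, I will use Plancherel together with the Fourier symbols of $D_x^{\pm1/2}$ to rewrite the $\mathscr{Z}$-norm (with $c=1$) as
$$
\|u\|_{\mathscr{Z}}^2 \;=\; \int_{\rrt}\bigl(1+|\xi|+|\xi|^{-1}\eta^2\bigr)\,|\widehat{u}(\xi,\eta)|^2\,d\xi\,d\eta,
$$
which coincides with $\|u\|_{X^{1/2}}^2$ from the definition of the $X^s$-inner product at $s=1/2$. Thus $\mathscr{Z}$ and $X^{1/2}$ agree as Hilbert spaces with identical norms.

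For the identification $X^{1/2}=(X^1,L^2)_{1/2}$, I will invoke Lemma \ref{chandlemma} with $H_0=X^1$, $H_1=L^2(\rrt)$, measure space $(\rrt,d\xi\,d\eta)$, linear map $\mathcal{A}=\mathcal{F}$ (well defined on $X^1+L^2\subset\mathcal{S}'(\rrt)$), and weights $w_0(\xi,\eta)=1+|\xi|+|\xi|^{-1}\eta^2$ and $w_1(\xi,\eta)\equiv 1$. Plancherel's theorem makes $\mathcal{F}:L^2\to L^2(\rrt,w_1\,d\xi\,d\eta)$ a unitary isomorphism, and the Fourier-weight definition of $X^1$ makes $\mathcal{F}:X^1\to L^2(\rrt,w_0\,d\xi\,d\eta)$ a unitary isomorphism by construction. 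Taking $\theta=1/2$, the interpolated weight is $w_{1/2}=w_0^{1/2}w_1^{1/2}=(1+|\xi|+|\xi|^{-1}\eta^2)^{1/2}$, so that
$$
\|\phi\|_{(X^1,L^2)_{1/2}}^2 \;=\; \int_{\rrt}\bigl(1+|\xi|+|\xi|^{-1}\eta^2\bigr)|\widehat{\phi}|^2\,d\xi\,d\eta \;=\; \|\phi\|_{X^{1/2}}^2,
$$
which yields the desired equality with matching norms.

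The main technical point to verify is that the two descriptions of $X^1$ offered in the excerpt are mutually consistent: the Fourier-weight definition inherited from $X^s$ at $s=1$, and the Sobolev-type norm $\|\varphi_x\|_{L^2}^2+\|\varphi_{xx}\|_{L^2}^2+\|\varphi_{yy}\|_{L^2}^2$ arising from the identification of $X^1$ as the closure of $\partial_x(C_0^\infty(\rrt))$. Setting $u=\varphi_x$, so that $\widehat{\varphi}(\xi,\eta)=\widehat{u}(\xi,\eta)/(i\xi)$, a short computation turns the latter into $\int_{\rrt}(1+|\xi|^2+|\xi|^{-2}\eta^4)|\widehat{u}|^2\,d\xi\,d\eta$, which is equivalent (but not equal) to $\int_{\rrt}(1+|\xi|+|\xi|^{-1}\eta^2)^2|\widehat{u}|^2\,d\xi\,d\eta$ via the elementary inequality $(1+a+b)^2\asymp 1+a^2+b^2$ on $[0,\infty)^2$. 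Since the two norms define the same Hilbert topology on $X^1$, adopting the Fourier-weight norm when invoking Lemma \ref{chandlemma} is harmless and the identification of interpolation spaces does not depend on this choice.
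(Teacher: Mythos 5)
Your proposal is correct and follows essentially the same route as the paper: the identification $\mathscr{Z}=X^{1/2}$ is a direct Plancherel computation (which the paper records right after defining $X^s$), and the interpolation identity is obtained by applying Lemma \ref{chandlemma} with $\mathcal{A}=\mathcal{F}$ and power weights, exactly as in the paper's one-line proof. The only point to watch is the power of the weight assigned to $X^1$ (it should be chosen so that $\mathcal{F}:X^1\to L^2(\rrt,w\,d\xi\,d\eta)$ is genuinely unitary for the convention used in Lemma \ref{chandlemma}, which forces the square of $1+|\xi|+|\xi|^{-1}\eta^2$ under the measure-weight reading), but since the geometric mean at $\theta=1/2$ lands on the $X^{1/2}$ weight in either convention, this bookkeeping does not affect the conclusion; your extra verification that the two descriptions of $X^1$ give equivalent norms is a worthwhile addition.
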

\begin{proof}
It suffices to apply Lemma \ref{chandlemma} with $X=\rr^2$, $w_0=1$, $w_1=(1+|\xi|+|\xi|^{-1}\eta^2)$, and $\mathcal{A}$ being the Fourier transform.
\end{proof}

For any open set $\Omega\subset\rr^2$ and $s\geq0$, we define
$$
X^s(\Omega):=\Big\{u\in L^2(\Omega); \;u=f|_\Omega, \;\mbox{for\;\;some}\;f\in X^s\Big\}.
$$
Endowed with the norm
$$
\|u\|_{X^s(\Omega)}:=\inf\Big\{\|f\|_{X^s};\;u=f|_\Omega\;\mbox{with}\; f\in X^s \Big\},
$$
the space $X^s(\Omega)$ is a Hilbert space.

The next step is the construction of an extension operator from $X^1(\Omega)$ to $X^1$, where $\Omega$ is a rectangle. This construction was essentially given in \cite{lopez}, but for the sake of completeness we bring the details.

\begin{lemma}\label{extesionop}
Let $\Omega=(a,b)\times(c,d)$. Then, there exists a bounded (extension) linear operator, say, $E$, from $X^1(\Omega)$ to $X^1$ such that, for any $u\in X^1(\Omega)$, $Eu=u$ in $\Omega$, $\|Eu\|_{L^2}\leq C\|u\|_{L^2(\Omega)}$ and  $\|Eu\|_{X^1}\leq C\|u\|_{X^1(\Omega)}$, where $C$ is a constant depending only on $\Omega$.
\end{lemma}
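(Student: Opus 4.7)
The extension is built by Hestenes--Seeley reflection plus cutoff, with a corrective term to respect the nonlocal structure of $X^{1}$. By an affine change of variables we may assume $\Omega=(0,1)^{2}$. First, I would extend $u$ to a neighborhood $\Omega^{*}\supset\overline{\Omega}$ by reflecting across each side of $\partial\Omega$ with the second-order Hestenes--Seeley formula (e.g.\ $\widetilde u(x,y)=3u(-x,y)-2u(-2x,y)$ for $x\in(-\tfrac12,0)$, and symmetrically on the other three sides), and multiply by a smooth cutoff $\chi\equiv 1$ on $\Omega$ with $\operatorname{supp}\chi\subset\Omega^{*}$. The resulting $\chi\widetilde u$ is compactly supported, agrees with $u$ on $\Omega$, and the $L^{2}$ bound is immediate from the explicit formulas.

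The main obstacle is the $X^{1}$ bound. Because the $X^{1}$ norm carries the weight $(1+|\xi|+|\xi|^{-1}\eta^{2})^{2}$, any compactly supported $g\in X^{1}$ with $\widehat g$ regular at $\xi=0$ must satisfy $\widehat g(0,\eta)=0$, equivalently $\int_{\rr}g(x,y)\,dx=0$ for a.e.\ $y$; a naive reflection typically violates this constraint. I would correct for this by setting $Eu:=\chi\widetilde u-\rho(x)F(y)$, where $F(y):=\int_{\rr}\chi(x,y)\widetilde u(x,y)\,dx$ and $\rho\in C_{0}^{\infty}(\rr)$ is fixed with $\int_{\rr}\rho=1$ and support disjoint from $[0,1]$. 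Then $Eu$ still agrees with $u$ on $\Omega$, is compactly supported in $\rr^{2}$, and satisfies $\int_{\rr}Eu(x,y)\,dx=0$ for a.e.\ $y$, so it admits a compactly supported $L^{2}$-primitive $\psi(x,y):=\int_{-\infty}^{x}Eu(z,y)\,dz$ in the $x$-variable.

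To close the $X^{1}$ estimate I would verify that $\psi_{x}=Eu$, $\psi_{xx}=(Eu)_{x}$, and $\psi_{yy}$ all lie in $L^{2}(\rr^{2})$ with norms controlled by $\|u\|_{X^{1}(\Omega)}$. The first two follow from the explicit reflection formulas and the boundedness of multiplication by $\chi$, together with the standard fact that the Hestenes--Seeley formula bounds $\|\widetilde u_{x}\|_{L^{2}(\Omega^{*})}$ by $\|u_{x}\|_{L^{2}(\Omega)}$. The term $\psi_{yy}$ is the most delicate, since it involves the second derivative in $y$ of the integrated quantity $F$ and of $\chi\widetilde u$; I would estimate it by relating $F$ to a suitable ambient primitive of $u$ on $\Omega$, whose $L^{2}$ control in the $y$-variable is precisely what is encoded in the assumption $u\in X^{1}(\Omega)$. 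This is essentially the construction sketched in \cite{lopez}.
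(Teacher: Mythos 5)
Your proposal correctly isolates the real obstruction -- that a compactly supported element of $X^1$ must have zero $x$-mean for a.e.\ $y$ because of the weight $|\xi|^{-1}\eta^2$ -- and the correction $\chi\widetilde u-\rho(x)F(y)$ is a legitimate way to restore that constraint without disturbing $u$ on $\Omega$. The $L^2$ bound and the bounds on $\psi_x=Eu$ and $\psi_{xx}=(Eu)_x$ also go through as you say. But the step you flag as ``most delicate'' is where the argument actually breaks down as written, and it is not a routine verification. Writing $\psi_{yy}(x,y)=\int_{-\infty}^x\partial_y^2[\chi\widetilde u](z,y)\,dz-R(x)F''(y)$, the term $\int_{-\infty}^x\chi\,\widetilde u_{yy}\,dz$ cannot be estimated by putting $\widetilde u_{yy}$ in $L^2$, since $u_{yy}$ is not controlled by $\|u\|_{X^1(\Omega)}$ -- only $\partial_x^{-1}\partial_y^2u$ is. To exploit that, you must convert the reflected $\widetilde u_{yy}$ into reflections of $\partial_x^{-1}\partial_y^2u$; but reflection in $x$ does not commute with $\partial_x^{-1}$, and on the reflected strips the $x$-primitive of $\widetilde u_{yy}$ picks up integration ``constants'' (functions of $y$ alone, essentially traces of $\partial_y^2\partial_x^{-1}u$ on vertical lines) whose $L^2_y$ norms are not controlled by the $L^2(\Omega)$ norm of $\partial_y^2\partial_x^{-1}u$. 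The same difficulty reappears in $F''(y)=\int\partial_y^2(\chi\widetilde u)\,dx$. So ``relating $F$ to a suitable ambient primitive of $u$'' is precisely the missing content, not a detail.

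The paper avoids all of this by working with the primitive from the start: since $X^1$ is by definition the closure of $\partial_x(C_0^\infty)$, one writes $u=\partial_xf$ in $\Omega$, normalizes $f_0=f-\frac1{b-a}\int_a^bf\,dx$ so that Poincar\'e gives $\|f_0\|_{L^2(\Omega)}\lesssim\|u\|_{L^2(\Omega)}$, extends $f_0$ (not $u$) by a higher-order reflection preserving two derivatives, and sets $Eu=\partial_x(\eta f_2)$. Then the zero-mean constraint is automatic, and $\partial_x^{-1}\partial_y^2Eu=\partial_y^2(\eta f_2)$ is bounded directly by $\|f_2\|,\|\partial_yf_2\|,\|\partial_y^2f_2\|$ on the enlarged rectangle, each of which the reflection controls by the corresponding norm of $f_0$ on $\Omega$ (with $\|\partial_yf_0\|_{L^2(\Omega)}$ absorbed into $\|\partial_y^2f_0\|_{L^2(\Omega)}=\|\partial_x^{-1}\partial_y^2u\|_{L^2(\Omega)}$ by a one-dimensional Poincar\'e-type inequality in $y$). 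If you want to salvage your construction, the natural fix is to perform your reflection-plus-correction on the primitive rather than on $u$ -- at which point you have essentially reproduced the paper's proof.
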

\begin{proof}
Take any $u\in X^1(\Omega)$ and, without loss of generality, assume that $u=\partial_xf$  in $\Omega$, for some smooth function $f\in C_0^\infty(\rr^2)$ with $\|\partial_xf\|_{X^1}\leq 2\|u\|_{X^1(\Omega)}$. Define
$$
f_0(x,y)=f(x,y)-\frac{1}{b-a}\int_a^bf(x,y)\,dx.
$$
In $\Omega$ it is clear that $u=\partial_xf_0$. From Poincar\'e's inequality,
$$
\int_a^b\left|f(x,y)-\int_a^bf(z,y)\,dz \right|^2\,dx\leq (b-a)^2\int_a^b|\partial_xf(x,y)|^2\,dx.
$$
Hence, integrating with respect to $y$ on $(c,d)$,
\begin{equation}\label{fel5}
\|f_0\|_{L^2(\Omega)}\leq (b-a)\|\partial_xf\|_{L^2(\Omega)}=(b-a)\|u\|_{L^2(\Omega)}.
\end{equation}

Now we extend $f_0$ to the rectangle $[2a-b,2b-a]\times[c,d]$ by using a generalized reflection argument. Indeed, let
\begin{displaymath}
f_1(x,y) = \left\{ \begin{array}{ll}
f_0(x,y), & {\rm if}\; x\in[a,b],\\
{\displaystyle \sum_{i=1}^4a_if_0\Big(\frac{i+1}{i}b-\frac{1}{i}x,y\Big)}, & \textrm{if}\; x\in[b,2b-a],\\
{\displaystyle\sum_{i=1}^4a_if_0\Big(\frac{i+1}{i}a-\frac{1}{i}x,y\Big),} & \textrm{if}\; x\in[2a-b,a],
\end{array} \right.
\end{displaymath}
where the coefficients $a_i$ are such that
\begin{displaymath}
\left\{ \begin{array}{ll}
a_1+a_2+a_3+a_4=1,\\\\
{\displaystyle a_1+\frac{a_2}{2}+\frac{a_3}{3}+\frac{a_4}{4}=-1},\\\\
{\displaystyle a_1+\frac{a_2}{4}+\frac{a_3}{9}+\frac{a_4}{16}=1},\\\\
{\displaystyle a_1+\frac{a_2}{8}+\frac{a_3}{27}+\frac{a_4}{64}=-1}.\\\\
\end{array} \right.
\end{displaymath}
It is clear that $f_1$ is a $C^2$ function on $(2a-b,2b-a)\times(c,d)$ with
$$
\|\partial^\alpha f_1\|_{L^2((2a-b,2b-a)\times(c,d))}\leq C\|\partial^\alpha f_0\|_{L^2(\Omega)},
$$
for all multi-indices $\alpha\in \N^2$ with $|\alpha|\leq2$.
By using the same argument we can extend $f_1$ to the rectangle $\widetilde{\Omega}=(2a-b,2b-a)\times(2c-d,2d-c)$ by defining a $C^2$ function $f_2$ such that
\begin{equation}\label{fel6}
\|\partial^\alpha f_2\|_{L^2(\widetilde{\Omega})}\leq C\|\partial^\alpha f_0\|_{L^2(\Omega)},
\end{equation}
for all multi-indices $\alpha\in \N^2$ with $|\alpha|\leq2$.

Now take a smooth function $\eta$ such that $\eta\equiv1$ on $\Omega$ and $\eta\equiv0$ on $\rr^2\setminus\widetilde{\Omega}$. Finally, define the extension operator $E$ by setting $Eu=\partial_x(\eta f_2)$. Let us estimate $Eu$ in the  $X^1$ norm. First of all, note that from \eqref{fel5} and \eqref{fel6}, we have
\begin{equation}\label{fel7}
\begin{split}
\|Eu\|_{L^2}&\leq C\Big( \|f_2\|_{L^2(\widetilde{\Omega})}+\|\partial_x f_2\|_{L^2(\widetilde{\Omega})} \Big)\\
&\leq C\Big( \|f_0\|_{L^2({\Omega})}+\|\partial_x f_0\|_{L^2({\Omega})} \Big)\\
&\leq C\|u\|_{L^2({\Omega})}.
\end{split}
\end{equation}
Also, by using \eqref{fel7} and \eqref{fel6},
\begin{equation}\label{fel8}
\begin{split}
\|\partial_xEu\|_{L^2}&\leq C\Big( \|f_2\|_{L^2(\widetilde{\Omega})}+\|\partial_x f_2\|_{L^2(\widetilde{\Omega})}+\|\partial_x^2f_2\|_{L^2(\widetilde{\Omega})} \Big)\\
&\leq C\Big(\|u\|_{L^2({\Omega})}+ \|\partial_x(\partial_xf_0)\|_{L^2({\Omega})} \Big)\\
&\leq C\Big(\|u\|_{L^2({\Omega})}+\|\partial_xu\|_{L^2({\Omega})}\Big).
\end{split}
\end{equation}
It remains to estimate $\partial_x^{-1}\partial_y^2Eu$. In this case, we have
\begin{equation}\label{fel9}
\begin{split}
\|\partial_x^{-1}\partial_y^2Eu\|_{L^2}&\leq C\Big( \|f_2\|_{L^2(\widetilde{\Omega})}+\|\partial_y f_2\|_{L^2(\widetilde{\Omega})}+\|\partial_y^2f_2\|_{L^2(\widetilde{\Omega})} \Big)\\
&\leq C\Big(\|u\|_{L^2({\Omega})}+\|\partial_y f
_0\|_{L^2({\Omega})}+ \|\partial_y^2f_0\|_{L^2({\Omega})} \Big)\\
&\leq  C\Big(\|u\|_{L^2({\Omega})}+\|\partial_y f
_0\|_{L^2({\Omega})}+ \|\partial_x^{-1}\partial_y^2u\|_{L^2({\Omega})} \Big).
\end{split}
\end{equation}
Note that
$$
\partial_yf_0(x,y)=\int_c^d\partial_y^2f_0(x,z)\,dz.
$$
Hence, from the Cauchy-Schwarz inequality,
$$
|\partial_yf_0(x,y)|^2\leq (y-c)\int_c^d|\partial_y^2f_0(x,z)|^2\,dz.
$$
This last inequality now implies
\begin{equation}\label{fel10}
\|\partial_yf_0\|_{L^2(\Omega)}\leq \frac{d-c}{\sqrt2}\|\partial_y^2f_0\|_{L^2(\Omega)}\leq \frac{d-c}{\sqrt2}\|\partial_x^{-1}\partial_y^2u\|_{L^2({\Omega})}
\end{equation}
Gathering together \eqref{fel9} and \eqref{fel10}, we obtain
$$
\|\partial_x^{-1}\partial_y^2Eu\|_{L^2}\leq  C\Big(\|u\|_{L^2({\Omega})}+ \|\partial_x^{-1}\partial_y^2u\|_{L^2({\Omega})} \Big).
$$
The proof of the lemma is thus completed.
\end{proof}

\begin{remark}\label{remarkC}
A simple inspection in the proof of Lemma \ref{extesionop} reveals that the positive constant $C$ depend only on the difference $b-a$ and $d-c$, but not on the rectangle $\Omega$ itself.
\end{remark}

With the extension operator in hand, we can also prove that the space $X^{1/2}(\Omega)$ is also the interpolation of $L^2(\Omega)$ and $X^1(\Omega)$. Results of this type are well known in the context of the standard Sobolev spaces, see, for instance, \cite[Lemma 4.2]{chan}.

\begin{lemma}\label{interplem1}
	Let $\Omega=(a,b)\times(c,d)$. Then, $X^{1/2}(\Omega)=(L^2(\Omega),X^1(\Omega))_{1/2}$, with equivalence of norms.
\end{lemma}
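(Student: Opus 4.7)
The plan is to establish both continuous inclusions by combining the interpolation functor property from Lemma \ref{lemmaintb2} with the extension operator built in Lemma \ref{extesionop} and with the trivial restriction operator. The starting point is the global identification from Lemma \ref{interplemma}, namely $X^{1/2}=(L^2,X^1)_{1/2}$ (which equals $(X^1,L^2)_{1/2}$ by the symmetry $(H_0,H_1)_\theta=(H_1,H_0)_{1-\theta}$).

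First I would handle the easy inclusion $X^{1/2}(\Omega)\hookrightarrow (L^2(\Omega),X^1(\Omega))_{1/2}$. The restriction map $R:f\mapsto f|_\Omega$ is clearly bounded as $R:L^2\to L^2(\Omega)$ and as $R:X^1\to X^1(\Omega)$, both with norm at most one by the very definition of the quotient norms on $X^s(\Omega)$. Lemma \ref{lemmaintb2} then gives that $R$ is bounded from $(L^2,X^1)_{1/2}=X^{1/2}$ into $(L^2(\Omega),X^1(\Omega))_{1/2}$. Given $u\in X^{1/2}(\Omega)$, pick any $f\in X^{1/2}$ with $u=f|_\Omega$; then $u=Rf$ belongs to the interpolation space and satisfies $\|u\|_{(L^2(\Omega),X^1(\Omega))_{1/2}}\leq C\|f\|_{X^{1/2}}$. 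Taking the infimum over such $f$ yields $\|u\|_{(L^2(\Omega),X^1(\Omega))_{1/2}}\leq C\|u\|_{X^{1/2}(\Omega)}$.

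For the reverse inclusion $(L^2(\Omega),X^1(\Omega))_{1/2}\hookrightarrow X^{1/2}(\Omega)$, I would invoke the extension operator $E$ from Lemma \ref{extesionop}. That lemma provides a single linear map that is bounded both as $E:L^2(\Omega)\to L^2$ and as $E:X^1(\Omega)\to X^1$, with $Eu=u$ on $\Omega$. Applying Lemma \ref{lemmaintb2} once more, $E$ extends to a bounded operator $(L^2(\Omega),X^1(\Omega))_{1/2}\to (L^2,X^1)_{1/2}=X^{1/2}$. For any $u$ in the interpolation space, $Eu\in X^{1/2}$ is an extension of $u$, so $u\in X^{1/2}(\Omega)$ with $\|u\|_{X^{1/2}(\Omega)}\leq \|Eu\|_{X^{1/2}}\leq C\|u\|_{(L^2(\Omega),X^1(\Omega))_{1/2}}$.

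Combining both bounds gives the equivalence of norms. The only genuinely nontrivial input is the existence of a simultaneous bounded extension $E$ on $L^2(\Omega)$ and $X^1(\Omega)$, which is precisely the content of Lemma \ref{extesionop}; everything else is a direct application of the exact interpolation property of the functor $(\cdot,\cdot)_{1/2}$ between compatible pairs of Hilbert spaces. No Palais--Smale-type difficulty arises here, so I do not anticipate a hidden obstacle beyond correctly invoking the already-established extension.
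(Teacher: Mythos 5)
Your argument is correct and follows essentially the same route as the paper: both inclusions are obtained by applying the exact interpolation property (Lemma \ref{lemmaintb2}) to the restriction operator in one direction and to the extension operator of Lemma \ref{extesionop} in the other, using the identification $X^{1/2}=(X^1,L^2)_{1/2}$ from Lemma \ref{interplemma}. Your explicit remark that $(H_0,H_1)_{1/2}=(H_1,H_0)_{1/2}$ and your tracking of the norm constants are slightly more careful than the paper's write-up, but the substance is identical.
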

\begin{proof}
From Lemma \ref{interplemma} we know that $X^{1/2}=(X^1,L^2)_{1/2}$. Note that, for any $s\geq0$, the restriction operator  $R:X^s\to X^s(\Omega)$ is bounded. Thus, from Lemma  \ref{lemmaintb2},
\begin{equation}\label{fel1}
X^{1/2}(\Omega)=R(X^{1/2})\subset (L^2(\Omega),X^1(\Omega))_{1/2}.
\end{equation}
On the other hand, the extension operator $E$ constructed in Lemma  \ref{extesionop} is bounded from $L^2(\Omega)$ to $L^2$ and from $X^1(\Omega)$ to $X^1$. Thus, another application of Lemma \ref{lemmaintb2} gives $E((L^2(\Omega),X^1(\Omega))_{1/2})\subset (X^1,L^2)_{1/2}=X^{1/2}$. Hence,
\begin{equation}\label{fel2}
(L^2(\Omega),X^1(\Omega))_{1/2}=RE((L^2(\Omega),X^1(\Omega))_{1/2})\subset R(X^{1/2})=X^{1/2}(\Omega).
\end{equation}
A combination of \eqref{fel1} and \eqref{fel2} yields the desired.
\end{proof}

\begin{proposition}\label{prop2.9}
Let $\{\Omega_i\}_{i\in \N}$ be a covering of $\rr^2$, where $\Omega_i$ is an open square with edges parallel to the coordinate axis and side-length $\ell$, and such that each point of $\rr^2$ is contained in at most three squares. Then, there exists a constant $C>0$, such that
$$
\sum_{i=0}^\infty\|u\|_{X^{1/2}(\Omega_i)}^2\leq C\|u\|_{X^{1/2}}^2,
$$
for any $u\in X^{1/2}$.
\end{proposition}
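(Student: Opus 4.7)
The plan is to recognize the claimed estimate as the boundedness of the restriction operator
\[
T\colon u\longmapsto\bigl(u|_{\Omega_i}\bigr)_{i\in\N}
\]
acting between direct sums of Hilbert spaces; writing $\|Tu\|_{\ell^2(X^{1/2}(\Omega_i))}^2:=\sum_i\|u|_{\Omega_i}\|_{X^{1/2}(\Omega_i)}^2$, the inequality to be shown is simply $\|Tu\|_{\ell^2(X^{1/2}(\Omega_i))}\le C\|u\|_{X^{1/2}}$. I would deduce this by interpolation between the two much easier endpoint bounds $T\colon L^2\to\ell^2(L^2(\Omega_i))$ and $T\colon X^1\to\ell^2(X^1(\Omega_i))$.

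The $L^2$-endpoint is immediate from the bounded-overlap hypothesis:
\[
\sum_i\|u|_{\Omega_i}\|_{L^2(\Omega_i)}^2=\sum_i\int_{\Omega_i}|u|^2\,\dd x\,\dd y\le 3\|u\|_{L^2}^2.
\]
For the $X^1$-endpoint, the strategy is to revisit the proof of Lemma \ref{extesionop} and extract the following refinement, implicit in its construction: for any $u\in X^1$ globally defined and any square $\Omega=\Omega_i$ of side-length $\ell$, the extension $E(u|_\Omega)$ produced there satisfies
\[
\|E(u|_\Omega)\|_{X^1}^2\le C\Bigl(\|u\|_{L^2(\Omega)}^2+\|\partial_x u\|_{L^2(\Omega)}^2+\|\partial_x^{-1}\partial_y^2 u\|_{L^2(\Omega)}^2\Bigr),
\]
where, crucially, $C$ depends only on $\ell$ and \emph{not} on the position of $\Omega$, as recorded in Remark \ref{remarkC}. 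Combining this with the trivial bound $\|u|_{\Omega_i}\|_{X^1(\Omega_i)}\le\|E(u|_{\Omega_i})\|_{X^1}$ coming from the infimum in the definition of the restriction norm, and summing over $i$ while applying the bounded-overlap property to each of the three $L^2$-terms on the right, yields
\[
\sum_i\|u|_{\Omega_i}\|_{X^1(\Omega_i)}^2\le 3C^2\,\|u\|_{X^1}^2.
\]

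With both endpoints in hand, the interpolation theorem (Lemma \ref{lemmaintb2}) applied with $\theta=1/2$ gives the boundedness of $T$ from $(L^2,X^1)_{1/2}$ into $(\ell^2(L^2(\Omega_i)),\ell^2(X^1(\Omega_i)))_{1/2}$. The left-hand side is $X^{1/2}$ by Lemma \ref{interplemma}. For the right-hand side I would invoke the standard fact (see, e.g., \cite{bergh}, \cite{triebel}) that real interpolation commutes with $\ell^2$-direct sums, which identifies this space with $\ell^2\bigl((L^2(\Omega_i),X^1(\Omega_i))_{1/2}\bigr)$; by Lemma \ref{interplem1} this in turn coincides with $\ell^2(X^{1/2}(\Omega_i))$. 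Threading the identifications together produces exactly the inequality asserted in the proposition.

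The main technical point, and the reason for insisting on a covering by squares of fixed side-length with bounded overlap, is the uniformity in $i$ of every constant involved---both the constant in the refined extension bound above and those appearing in the norm equivalence of Lemma \ref{interplem1}. Both requirements are secured by the translation-invariant character of the construction of $E$ highlighted in Remark \ref{remarkC}. Once uniformity is in place, the $\ell^2$-sum and interpolation arguments are entirely routine.
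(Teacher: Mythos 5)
Your proposal is correct and follows essentially the same route as the paper's proof: both endpoints ($L^2$ via bounded overlap, $X^1$ via the extension operator of Lemma \ref{extesionop} with the uniform constant of Remark \ref{remarkC} and the bound $\|u\|_{X^1(\Omega_i)}\le\|E_iu\|_{X^1}$) and then interpolation of the $\ell^2$-valued sequence spaces (the paper cites Theorem 1.18.1 of \cite{triebel} for exactly the commutation of real interpolation with $\ell^2$-direct sums that you invoke), together with Lemmas \ref{lemmaintb2}, \ref{interplemma} and \ref{interplem1}. Your explicit remark on the uniformity in $i$ of the constants in Lemma \ref{interplem1} is a point the paper leaves implicit, but it is not a deviation in method.
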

\begin{proof}
Let $E_i$ be the extension operator from $X^1(\Omega_i)$ to $X^1$ as constructed in Lemma \ref{extesionop}. Thus, from Lemma \ref{extesionop},
\begin{equation}\label{fel11}
\|E_iu\|^2_{X^1}\leq C \int_{\Omega_i}(|u|^2+|\partial_x u|^2+|\partial_x^{-1}\partial_y^2 u|^2).
\end{equation}
As observed in Remark \ref{remarkC}, the constant $C$ in \eqref{fel11} depends only on $\ell$ but not on $i\in\N$. By observing that the restriction operator $R_i:X^1\to X^1(\Omega_i)$ is bounded with norm 1 and the composition $R_iE_i$ is the identity operator, we obtain
\begin{equation}\label{fel12}
\|u\|_{X^1(\Omega_i)}\leq \|E_iu\|_{X^1}.
\end{equation}
Hence, \eqref{fel11} and \eqref{fel12} imply
$$
\sum_{i=0}^\infty\|u\|^2_{X^1(\Omega_i)}\leq C\sum_{i=0}^\infty\int_{\Omega_i}(|u|^2+|\partial_x u|^2+|\partial_x^{-1}\partial_y^2 u|^2)\leq 3C\|u\|^2_{X^1}.
$$
This means that the restriction operator is bounded from $X^1$ to $\ell^2(X^1(\Omega_i))$. On the other hand, the trivial inequality,
$$
\sum_{i=0}^\infty\|u\|_{L^2(\Omega_i)}\leq 3\|u\|_{L^2},
$$
implies that the restriction operator is also bounded from $L^2$ to $\ell^2(L^2(\Omega_i))$. Then, Theorem 1.18.1 in \cite{triebel} combined with Lemmas \ref{lemmaintb2}, \ref{interplemma}, and \ref{interplem1} gives that the restriction is bounded from $X^{1/2}$ to $\ell^2(X^{1/2}(\Omega_i))$, which is the desired conclusion.
\end{proof}

Now we are ready to prove Lemma \ref{complemma}.

\begin{proof}[Proof of Lemma \ref{complemma}]
Let  $\{\ff_n\}$ be a bounded sequence in $\mathscr{Z}=X^{1/2}$ and select a constant $C_0>0$ such that $\|\ff_n\|_\mathscr{Z}\leq C_0$. It is sufficient to show that  $\{\ff_n\}$ has a convergent subsequence in $L^2_{loc}(\rr^2)$, because if this is true then Lemma \ref{embed} implies that  $\{\ff_n\}$ also has a convergent subsequence in $L^{p+2}_{loc}(\rr^2)$, $0<p<2$. To do that, it suffices to show that $\{\ff_n\}$ converges, up to a subsequence, in $L^2(\Omega_R)$, where $\Omega_R$ is a square with center at the origin, edges parallel to the coordinate axis,  and side-length $R>0$. Let $E_R$ be the extension operator constructed in Lemma \ref{extesionop}. By construction, if $u\in X^{1/2}$ then $E_R(u)=u$ in $\Omega_R$ and $E_R(u)=0$ in $\rr^2\setminus\Omega_{3R}$. Thus, without loss of generality, we can assume that $\ff_n=E_R(\ff_n)$ for all $n\in\N$. Now, since $X^{1/2}$ is a Hilbert space, there exists $\ff\in X^{1/2}$ such that $\ff_n\rightharpoonup\ff$ weakly in $X^{1/2}$. In addition, replacing $\ff_n$ by $\ff_n-\ff$, if necessary, we can assume  $\ff=0$, that is, $\ff_n\rightharpoonup0$ in $X^{1/2}$.

Fixed $\rho>0$ to be chosen later, define
\[\begin{split}
&Q_0=\{(\xi,\eta)\in\rr^2;\,|\xi|\leq\rho,\;|\eta|\leq\rho\},\\
&Q_1=\{(\xi,\eta)\in\rr^2;\,|\xi|\leq\rho,\;|\eta|\geq\rho\},\\
&Q_2=\{(\xi,\eta)\in\rr^2;\,|\xi|\geq\rho\}.
\end{split}\]
Plancherel's identity and the fact that $\ff_n=0$ outside the square $\Omega_{3R}$  yield
\begin{equation}\label{prob-in}
\int_{\Omega_{3R}}|\ff_n|^2= \int_{\rr^2}|\ff_n|^2 =\int_{\rr^2}|\what{\ff}_n|^2=\sum_{i=0}^2\int_{Q_i}|\what{\ff}_n|^2.
\end{equation}

From the definitions of $Q_1$ and $Q_2$, it is clear that
\[
\int_{Q_1}|\what{\ff}_n|^2
=\int_{Q_1}\frac{|\xi|}{|\eta|^2}\left|\widehat{D_x^{-1/2}\partial_y\ff}_n\right|^2
\leq\frac{1}{\rho}\left\|D_x^{-1/2}\partial_y\ff_n\right\|_{L^2(\rr^2)}
\leq \frac{C_0}{\rho}
\]
and
\[
\int_{Q_2}|\what{\ff}_n|^2
=\int_{Q_2}\frac{1}{|\xi|}\left|\what{D_x^{1/2}\ff}_n\right|^2
\leq\frac{1}{\rho}\left\|D_x^{1/2}\ff_n\right\|_{L^2(\rr^2)}
\leq \frac{C_0}{\rho}.
\]
Fix $\varepsilon>0$; then choosing $\rho>0$ sufficiently large leads to
\[
\int_{Q_1}|\what{\ff}_n|^2+\int_{Q_2}|\what{\ff}_n|^2\leq\varepsilon/2.
\]
Since $\ff_n\rightharpoonup0$ in $L^2(\rr^2)$, then $\what{\ff}_n$ tends to zero as $n\to\infty$ and
\begin{equation}\label{prob1-in}
|\what{\ff}_n(\xi,\eta)|\leq\|\ff_n\|_{L^1(\Omega_{3R})}\leq C\|\ff_n\|_{L^2}.
\end{equation}
 Lebesgue's dominated convergence theorem implies that
\[
\lim_{n\to\infty}\int_{Q_0}|\what{\ff}_n|^2=0.
\]
 Thus we have proved that, up to a subsequence, $\ff_n\to0$ in $L^2_{{\rm loc}}(\rr^2)$, which concludes the proof of the lemma.
\end{proof}

We conclude this section by observing that Lemma \ref{embed} also holds when norms are restrict to a rectangle.

\begin{lemma}\label{boundemb}
Assume $0\leq p\leq 2$. Let $\Omega=(a,b)\times(c,d)$ be a rectangle. There exist a constant $C>0$ such that, for any $\ff\in X^{1/2}(\Omega)$,
$$
\|\ff\|_{L^{p+2}(\Omega)}\leq C\|\ff\|_{X^{1/2}(\Omega)}.
$$
\end{lemma}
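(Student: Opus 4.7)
The plan is to reduce the inequality on $\Omega$ to the corresponding global inequality already established in Lemma \ref{embed}, via the extension operator of Lemma \ref{extesionop} together with the interpolation identification given by Lemma \ref{interplem1}.

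More precisely, first I would invoke Lemma \ref{extesionop} to produce an extension operator $E$ defined on $X^1(\Omega)$ with values in $X^1$. The key observation is that this same $E$ is also bounded from $L^2(\Omega)$ into $L^2$. Hence $E$ is a bounded operator simultaneously between the two compatible pairs $(L^2(\Omega),X^1(\Omega))$ and $(L^2,X^1)$, and Lemma \ref{lemmaintb2} then gives boundedness at the interpolation level of exponent $1/2$. Using Lemma \ref{interplem1} on the source side and Lemma \ref{interplemma} on the target side, this boundedness reads
$$
E:X^{1/2}(\Omega)\longrightarrow X^{1/2}=\z, \qquad \|E\varphi\|_{\z}\leq C\|\varphi\|_{X^{1/2}(\Omega)},
$$
with $E\varphi=\varphi$ on $\Omega$ and constant $C$ depending only on $\Omega$.

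Once such an extension is available, the lemma follows immediately: given $\varphi\in X^{1/2}(\Omega)$, apply Lemma \ref{embed} to $E\varphi\in\z$ to obtain
$$
\|\varphi\|_{L^{p+2}(\Omega)}\leq \|E\varphi\|_{L^{p+2}(\rr^2)}\leq C\|E\varphi\|_{\z}\leq C\|\varphi\|_{X^{1/2}(\Omega)},
$$
which is the claim.

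There is essentially no obstacle here — all the genuine work has already been done in Lemmas \ref{embed}, \ref{extesionop}, \ref{interplemma} and \ref{interplem1}. The only subtle point is to notice that the extension operator constructed for the integer-derivative space $X^1$ automatically upgrades, by interpolation, to a bounded extension operator on the fractional space $X^{1/2}(\Omega)$; this is exactly what makes the reduction to the global Gagliardo–Nirenberg inequality legitimate.
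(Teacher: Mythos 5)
Your argument is correct and is essentially identical to the paper's own proof: the paper likewise obtains boundedness of the extension operator from $X^{1/2}(\Omega)$ to $X^{1/2}$ by combining Lemmas \ref{extesionop} and \ref{lemmaintb2} (with the identifications of Lemmas \ref{interplemma} and \ref{interplem1}), and then concludes via the continuous embedding $X^{1/2}\hookrightarrow L^{p+2}(\rrt)$ of Lemma \ref{embed} followed by restriction to $\Omega$. Nothing further is needed.
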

\begin{proof}
From Lemmas \ref{extesionop} and \ref{lemmaintb2} we know that the extension operator is bounded from $X^{1/2}(\Omega)$ to $X^{1/2}$. Now it suffices to note that the identity operator is continuous from $X^{1/2}$ to $L^{p+2}(\rrt)$ and the restriction operator is continuous from $L^{p+2}(\rrt)$ to $L^{p+2}(\Omega)$.
\end{proof}

\subsection{Pohojaev-type identities and nonexistence of solitary waves}

As usual, let us first to get an insight for which class of nonlinearities, solutions of \eqref{shrira} are expected. This is done with integration by parts.

\begin{theorem}\label{nonexistence}
Assume $c>0$. Equation  \eqref{main-shrira} does not possesses solitary-wave solutions of the form $u(x,y,t)=\ff(x-ct,t)$, $\ff\in\z$, whether
\begin{enumerate}[(i)]
\item ${\displaystyle \int_{\rr^2} \ff f(\ff)\;\dd x\dd y\leq2\int_{\rr^2}F(\ff)\;\dd x\dd y}$; or
\item  ${\displaystyle \int_{\rr^2} \left(\ff f(\ff)+2F(\ff)\right)\;\dd x\dd y\leq0}$.
\end{enumerate}
\end{theorem}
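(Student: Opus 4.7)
The plan is to derive two Pohozaev-type identities for any $\varphi\in\mathscr{Z}$ satisfying \eqref{shrira}, and then deduce that the hypotheses (i) and (ii) are incompatible with $\varphi\not\equiv 0$.

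First, recall that $\mathscr{H}\partial_x^{-1}\Delta$ is the Fourier multiplier with symbol $(\xi^2+\eta^2)/|\xi|=|\xi|+\eta^2/|\xi|$. Multiplying \eqref{shrira} by $\varphi$ and integrating (using Plancherel on the linear term, after the standard regularization $\varphi\mapsto\varphi*\rho_\varepsilon$ to justify the pairing in $\mathscr{Z}$) gives
\begin{equation}\label{poh1}
c\|\varphi\|_{L^2}^2+\|D_x^{1/2}\varphi\|_{L^2}^2+\|D_x^{-1/2}\varphi_y\|_{L^2}^2=\int_{\rrt}\varphi f(\varphi),
\end{equation}
that is, $\|\varphi\|_\mathscr{Z}^2=\int\varphi f(\varphi)$.

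Next, I introduce the action functional $I(\varphi)=\frac{c}{2}\|\varphi\|_{L^2}^2+\frac{1}{2}\bigl(\|D_x^{1/2}\varphi\|_{L^2}^2+\|D_x^{-1/2}\varphi_y\|_{L^2}^2\bigr)-\int F(\varphi)$, whose critical points in $\mathscr{Z}$ are exactly the solutions of \eqref{shrira}. To produce the second identity, I use the isotropic dilation $\varphi_\lambda(x,y):=\varphi(\lambda x,\lambda y)$. A change of variables on the Fourier side shows
\begin{equation*}
\|\varphi_\lambda\|_{L^2}^2=\lambda^{-2}\|\varphi\|_{L^2}^2,\qquad
\|D_x^{1/2}\varphi_\lambda\|_{L^2}^2+\|D_x^{-1/2}(\varphi_\lambda)_y\|_{L^2}^2=\lambda^{-1}\bigl(\|D_x^{1/2}\varphi\|_{L^2}^2+\|D_x^{-1/2}\varphi_y\|_{L^2}^2\bigr),
\end{equation*}
and $\int F(\varphi_\lambda)=\lambda^{-2}\int F(\varphi)$. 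Since $\varphi$ is a critical point of $I$, $\frac{d}{d\lambda}I(\varphi_\lambda)\big|_{\lambda=1}=0$, which yields
\begin{equation}\label{poh2}
2c\|\varphi\|_{L^2}^2+\|D_x^{1/2}\varphi\|_{L^2}^2+\|D_x^{-1/2}\varphi_y\|_{L^2}^2=4\int_{\rrt}F(\varphi).
\end{equation}

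Combining \eqref{poh1} and \eqref{poh2} gives the two key relations
\begin{equation*}
\int_{\rrt}\varphi f(\varphi)-2\int_{\rrt}F(\varphi)=\tfrac{1}{2}\bigl(\|D_x^{1/2}\varphi\|_{L^2}^2+\|D_x^{-1/2}\varphi_y\|_{L^2}^2\bigr),
\end{equation*}
\begin{equation*}
\int_{\rrt}\varphi f(\varphi)+2\int_{\rrt}F(\varphi)=2c\|\varphi\|_{L^2}^2+\tfrac{3}{2}\bigl(\|D_x^{1/2}\varphi\|_{L^2}^2+\|D_x^{-1/2}\varphi_y\|_{L^2}^2\bigr).
\end{equation*}
Under (i) the right-hand side of the first equality must vanish, forcing $D_x^{1/2}\varphi=0$, hence $\widehat{\varphi}$ is supported on $\{\xi=0\}$ and thus $\varphi\equiv 0$ since $\widehat{\varphi}\in L^2(\rrt)$. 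Under (ii) the second equality forces $c\|\varphi\|_{L^2}^2=0$, so again $\varphi\equiv 0$.

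I expect the main obstacle to be justifying \eqref{poh1} and the differentiability of $\lambda\mapsto I(\varphi_\lambda)$ purely at the $\mathscr{Z}$ level, because of the non-local, anisotropic nature of $\mathscr{H}\partial_x^{-1}\Delta$ and the negative derivative in the $y$-direction present in the definition of $\mathscr{Z}$. This is handled by approximating $\varphi$ by Schwartz functions (using a smooth cutoff in Fourier space together with mollification, which preserves $\mathscr{Z}$-convergence by the dominated convergence theorem applied to $(1+|\xi|+|\xi|^{-1}\eta^2)^{1/2}\widehat{\varphi}$) and passing to the limit, using the continuous embedding $\mathscr{Z}\hookrightarrow L^{p+2}$ from Lemma \ref{embed} to control the nonlinear term whenever $F,f$ enjoy the natural growth conditions implicit in the statement.
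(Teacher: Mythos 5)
Your first identity coincides with the paper's \eqref{shr-nonex-1} and is unproblematic: it is just $\langle S'(\ff),\ff\rangle=0$, legitimate because $\ff$ itself lies in $\z$. The gap is in your second identity. The isotropic dilation $\ff_\lambda(x,y)=\ff(\lambda x,\lambda y)$ produces the \emph{sum} of the $x$- and $y$-Pohozaev identities, and the paper is explicit (in the remark following the theorem) that it is unable to justify the $x$-component \eqref{shr-nonex-2} for $\ff\in\z$, precisely because of the nonlocal operator $\h$; indeed, if your dilation identity held, then combined with \eqref{shr-nonex-1} and \eqref{shr-nonex-3} it would force $\|D_x^{1/2}\ff\|_{L^2}^2=3\|D_x^{-1/2}\ff_y\|_{L^2}^2$ and yield the strictly stronger nonexistence criterion \eqref{shr-nonex-6}, which the authors present only conditionally. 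The obstruction is not the one your last paragraph addresses: the curve $\lambda\mapsto\ff_\lambda$ is continuous but not differentiable in $\z$, since its formal derivative $x\ff_x+y\ff_y$ at $\lambda=1$ does not belong to $\z$ (neither $\ff_x$ nor the weight $x$ is controlled by the $\z$-norm), so the chain-rule step $\frac{d}{d\lambda}S(\ff_\lambda)\big|_{\lambda=1}=\langle S'(\ff),x\ff_x+y\ff_y\rangle=0$ is not available. Mollifying $\ff$ does not repair this, because the mollified function is no longer an exact critical point and the error terms are exactly the commutators of $x$ with $\h$, $D_x^{1/2}$, $D_x^{-1/2}\partial_y$ that the truncation argument of \cite{dbs1} fails to control in the $x$-direction.

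The paper instead obtains a second identity by multiplying \eqref{shrira} by $y\ff_y$ only (the $y$-dilation), namely \eqref{shr-nonex-3}, whose justification by truncation does go through because the troublesome nonlocal operators act only in $x$ while the weight is $y$. Adding and subtracting \eqref{shr-nonex-1} and \eqref{shr-nonex-3} gives $\int\ff f(\ff)-2\int F(\ff)=2\|D_x^{-1/2}\ff_y\|_{L^2}^2$ and $\int\ff f(\ff)+2\int F(\ff)=2c\|\ff\|_{L^2}^2+2\|D_x^{1/2}\ff\|_{L^2}^2$, from which (i) and (ii) follow exactly as in your concluding step. So your overall strategy (two quadratic identities, then sign considerations) is the right one, but you must replace the isotropic dilation by the anisotropic $y$-only multiplier to obtain an identity that can actually be justified at the $\z$ level.
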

\begin{proof} Formally, by multiplying   equation   \eqref{shrira} by $\ff$ and $y\ff_y$, respectively, and integrating over $\rr^2$, we deduce the identities
\begin{gather}
\int_{\rr^2}\left[-c\ff^2-\ff\mathscr{H}\ff_x- (D_x^{-1/2}\ff_y)^2
+\ff f(\ff)\right]\;\dd x\dd y=0\label{shr-nonex-1},\\
\int_{\rr^2}\left[c\ff^2+\ff\mathscr{H}\ff_x- (D_x^{-1/2}\ff_y)^2-
2F(\ff)\right]\;\dd x\dd y=0.\label{shr-nonex-3}
\end{gather}
For smooth functions decaying to $0$ at infinity, these
formulas follow from integration by parts together with elementary
properties of the Hilbert transform. The identities can be justified
for functions of the minimal regularity required for them to make
sense by the truncation argument put forward in \cite{dbs1}. The proof is completed by subtracting  and adding \eqref{shr-nonex-1} and \eqref{shr-nonex-3}.
 \end{proof}

\begin{remark}
Unfortunately,  Theorem \ref{nonexistence} is not strong enough to rule out the existence of solitary waves even in the case of a power-law nonlinearity. This is mainly because, in view of the nonlocal operator $\h$,  we are not able to prove a Pohojaev-type identity on the $x$-variable for \eqref{shrira}, i.e., (see \cite{epb} for similar calculations)
\begin{equation}\label{shr-nonex-2}
\int_{\rr^2}\left[c\ff^2+2 \psi_y ^2-2F(\ff)\right]\;\dd x\dd y=0.
\end{equation}
Indeed, if \eqref{shr-nonex-2} were valid. Then
subtracting  \eqref{shr-nonex-2} and \eqref{shr-nonex-3} leads to
\begin{equation}
\int_{\rr^2}\left(\ff\mathscr{H}\ff_x
-3\psi_y^2\right)\;\dd x\dd y=0\label{shr-nonex-4}.
\end{equation}
Adding   \eqref{shr-nonex-1} and
\eqref{shr-nonex-3}, there appears
\begin{equation}
\int_{\rr^2}\left(-2\psi_y^2+\ff f(\ff)-2F(\ff)\right)
\;\dd x\dd y=0\label{shr-nonex-5}.
\end{equation}
Finally, plugging \eqref{shr-nonex-5} in
\eqref{shr-nonex-2}, there obtains
\begin{equation}
c\int_{\rr^2}\ff^2\;\dd x\dd y=\int_{\rr^2}\left(4F(\ff)-\ff f(\ff)\right)\;\dd x\dd y.
\end{equation}
Therefore, there would exist no nontrivial   solitary-wave solution of \eqref{main-shrira} provided
\begin{equation}\label{shr-nonex-6}
4\int_{\rr^2} F(\ff)\;\dd x\dd y\leq\int_{\rr^2}\ff f(\ff)\;\dd x\dd y.
\end{equation}
To fix ideas, if we assume $f(\ff)=\ff^{p+1}$ and that  $\int\ff^{p+2}\geq0$, \eqref{shr-nonex-6} implies that solitary waves do not exist if $p>4$. This seems to be consistent with our embedding in Lemma \ref{embed}.
\end{remark}

\subsection{Existence of solitary waves}

In this subsection we will prove the existence of solution for \eqref{shrira} under suitable conditions on the nonlinearity $f$. Having in mind Lemma \ref{embed}, we assume the following.\\

\begin{enumerate}[(${\rm A}_1)$]
	\item $f:\rr\to\rr$ is continuous and $f(0)=0$;
	\item There exists $C>0$ such that $|f(u)|\leq C(|u|+|u|^{p-1})$, $p\in (2,4)$ and $f(u)=o(|u|)$ as $|u|\to0$;
	\item There exists $\mu>2$ such that $0<\mu F(u)\leq uf(u)$ for every $u\in\rr$, where $F$ is the primitive function of $f$.
	\item There exists $\omega\in\z$     such that $\lambda^{-2}F(\lambda\omega)\to+\infty$ as $\lambda\to+\infty$.\\
\end{enumerate}

The above assumptions are the ones suitable to apply minimax theory (see e.g. \cite{wi}). Probably, assumptions $(A_1)-(A_4)$ can be weakened to establish the existence of solitary waves. However, since our main interest is the study of \eqref{main-shrira} with a power-law nonlinearity, this will be enough to our purposes.

We start with the following vanishing property.

\begin{lemma}\label{cpt}
If $\{u_n\}$ is a bounded sequence in $\z$ and there is $r>0$ such that
\[
\lim_{n\to+\infty}\sup_{(x,y)\in\rrt}\int_{B_r(x,y)}|u_n|^2\dd x\dd y=0,
\]
then, for $2<p<4$,
$$\lim_{n\to\infty}\|u_n\|_{L^p(\rrt)}=0,$$
 where $B_r(x,y)\subset\rrt$ is the open ball centered at $(x,y)$ with radius $r$.
\end{lemma}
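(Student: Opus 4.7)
The plan is to first establish $\|u_n\|_{L^3(\rrt)}\to 0$ by a covering/interpolation argument, and then to obtain the full range $p\in(2,4)$ from this single vanishing by standard H\"older interpolation, using that $\{u_n\}$ stays bounded both in $L^2(\rrt)$ and in $L^4(\rrt)$ by Lemma \ref{embed}.

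For the first step I would cover $\rrt$ by a family of open squares $\{\Omega_i\}_{i\in\N}$ of side length $r$ with edges parallel to the coordinate axes and at most three-fold overlap, as in Proposition \ref{prop2.9}. Each $\Omega_i$ is contained in the ball of radius $r$ centered at its center, so the hypothesis gives
\[
\eta_n:=\sup_{i\in\N}\|u_n\|_{L^2(\Omega_i)}^{2}\leq \sup_{(x,y)\in\rrt}\int_{B_r(x,y)}|u_n|^{2}\,\dd x\,\dd y\longrightarrow 0.
\]
Combining the elementary H\"older interpolation $\|g\|_{L^3}^{3}\leq \|g\|_{L^2}\|g\|_{L^4}^{2}$ on each $\Omega_i$ with the local bound $\|u_n\|_{L^4(\Omega_i)}\leq C\|u_n\|_{X^{1/2}(\Omega_i)}$ supplied by Lemma \ref{boundemb} (applied with $p=2$) yields
\[
\|u_n\|_{L^3(\Omega_i)}^{3}\leq \eta_n^{1/2}\,\|u_n\|_{L^4(\Omega_i)}^{2}\leq C\,\eta_n^{1/2}\,\|u_n\|_{X^{1/2}(\Omega_i)}^{2}.
\]
Summing over $i$ and invoking Proposition \ref{prop2.9} together with the boundedness of $\{u_n\}$ in $\z=X^{1/2}$ then produces
\[
\|u_n\|_{L^3(\rrt)}^{3}\leq \sum_{i}\|u_n\|_{L^3(\Omega_i)}^{3}\leq C\,\eta_n^{1/2}\sum_{i}\|u_n\|_{X^{1/2}(\Omega_i)}^{2}\leq C'\,\eta_n^{1/2}\,\|u_n\|_{\z}^{2}\longrightarrow 0.
\]

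With $\|u_n\|_{L^3}\to 0$ in hand, the remaining cases $p\in(2,3)\cup(3,4)$ drop out of the one-line H\"older interpolations
\[
\|u_n\|_{L^p}\leq \|u_n\|_{L^2}^{1-\theta}\|u_n\|_{L^3}^{\theta}\qquad\text{or}\qquad \|u_n\|_{L^p}\leq \|u_n\|_{L^3}^{1-\sigma}\|u_n\|_{L^4}^{\sigma},
\]
with $\theta,\sigma\in(0,1)$ the usual interpolation exponents, since Lemma \ref{embed} keeps $\|u_n\|_{L^2}$ and $\|u_n\|_{L^4}$ uniformly bounded.

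The step I expect to be the main obstacle is precisely the summation that controls $\sum_i\|u_n\|_{X^{1/2}(\Omega_i)}^{2}$ by the global norm $\|u_n\|_{\z}^{2}$: because of the anisotropy of $\z$ (which involves the negative derivative $D_x^{-1/2}\partial_y$), one cannot localize $\|\cdot\|_{\z}$ to cubes in a naive way, and this inequality is exactly the content of Proposition \ref{prop2.9}, itself resting on the interpolation identification $\z=(L^2,X^1)_{1/2}$ of Lemma \ref{interplemma} and on the extension operator of Lemma \ref{extesionop}. Without that input, the natural interpolation would produce a uniformly bounded right-hand side rather than one tending to zero, and the whole argument collapses.
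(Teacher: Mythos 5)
Your proposal is correct and follows essentially the same route as the paper: a covering of $\rrt$ by squares of side $r$, the local H\"older bound $\|u\|_{L^3(\Omega_i)}^3\leq\|u\|_{L^2(\Omega_i)}\|u\|_{L^4(\Omega_i)}^2$ combined with Lemma \ref{boundemb}, summation via Proposition \ref{prop2.9} to get $\|u_n\|_{L^3}\to0$, and then H\"older interpolation against the uniformly bounded $L^2$ and $L^4$ norms from Lemma \ref{embed}. You also correctly identify Proposition \ref{prop2.9} as the essential nontrivial input.
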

\begin{proof}
Let $\{\Omega_i\}_{i\in \N}$ be a covering of $\rr^2$, where $\Omega_i$ is an open square with edges parallel to the coordinate axis and side-length $r$, and such that each point of $\rr^2$ is contained in at most three squares. By the H\"{o}lder inequality and Lemma \ref{boundemb}, there
holds, for any $u\in\z=X^{1/2}$,
\[\begin{split}
\|u\|_{L^3(\Omega_i)}^3&\leq \|u\|_{L^2(\Omega_i)}
\|u\|_{L^4(\Omega_i)}^2\\
&\leq\|u\|_{L^2(\Omega_i)}
\|u\|_{X^{1/2}(\Omega_i)}^2.
\end{split}\]
Thus, in view of Proposition \ref{prop2.9},
\[
\|u_n\|_{L^3(\rrt)}^3\lesssim \sum_{i=0}^\infty\int_{\Omega_i}|u_n|^3dxdy\lesssim
\sup_{(x,y)\in\rrt}
\|u_n\|_{L^2(B_r(x,y))}\|u_n\|_\z^2.
\]
Since $\{u_n\}$ is bounded, the assumption implies that $u_n\to0$ in $L^3(\rrt)$. Finally, by  interpolation
and Lemma \ref{embed}, there are $\theta_1,\theta_2\in(0,1)$, such that
\begin{equation}\label{cpt1}
\|u_n\|_{L^p(\rrt)}\leq\|u_n\|_{L^2(\rrt)}^{\theta_1}\|u_n\|_{L^3(\rrt)}^{1-\theta_1}\lesssim \|u_n\|_\z^{\theta_1}\|u_n\|_{L^3(\rrt)}^{1-\theta_1}, \qquad p\in(2,3)
\end{equation}
and
\begin{equation}\label{cpt2}
\|u_n\|_{L^p(\rrt)}\leq\|u_n\|_{L^4(\rrt)}^{\theta_2}\|u_n\|_{L^3(\rrt)}^{1-\theta_2}\lesssim \|u_n\|_\z^{\theta_2}\|u_n\|_{L^3(\rrt)}^{1-\theta_2}, \qquad p\in(3,4)
\end{equation}
The fact that $u_n\to0$ in $L^3(\rrt)$ and \eqref{cpt1}-\eqref{cpt2} then implies that $u_n\to0$ in $L^p(\rrt)$, for all $p\in(2,4)$, and the proof of the lemma is complete.
\end{proof}

Now we are able to prove our main theorem in this section.

\begin{theorem}[Existence]\label{existence}
	Assume $c>0$.
Under assumptions (A$_1$)-(A$_4$), equation \eqref{shrira-0} possesses a nontrivial
solution $\ff\in\z$.
\end{theorem}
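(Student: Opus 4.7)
The plan is to obtain a nontrivial critical point of the $C^{1}$ functional
$$
I(\ff)=\tfrac{1}{2}\|\ff\|_{\z}^{2}-\int_{\rrt}F(\ff)\,\dd x\,\dd y
$$
on $\z$. A direct Plancherel computation gives $(\ff,v)_{\z}=\int_{\rrt}(c\ff+\h\partial_{x}^{-1}\Delta\ff)\,v\,\dd x\,\dd y$, so the critical points of $I$ are precisely the weak solutions of \eqref{shrira-0}. The continuous embeddings $\z\hookrightarrow L^{q}(\rrt)$ for every $q\in[2,4]$ (Lemma \ref{embed}) together with the growth condition (A$_{2}$), whose exponent lies in $(2,4)$, make $I$ well defined and of class $C^{1}$.

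Next I verify the mountain pass geometry. Assumption (A$_{2}$) implies $|F(u)|\leq\varepsilon u^{2}+C_{\varepsilon}|u|^{p}$ for every $\varepsilon>0$, and combining this with Lemma \ref{embed} gives $I(\ff)\geq(\tfrac{1}{2}-C\varepsilon)\|\ff\|_{\z}^{2}-C\|\ff\|_{\z}^{p}$; since $p>2$, for $\varepsilon$ and $\rho$ small this produces $\alpha>0$ with $I(\ff)\geq\alpha$ whenever $\|\ff\|_{\z}=\rho$. Hypothesis (A$_{4}$) combined with Fatou's lemma (and the nonnegativity of $F$ implied by (A$_{3}$)) gives $\lambda^{-2}I(\lambda\omega)\to-\infty$, so that $e:=\lambda_{0}\omega$ with $\lambda_{0}$ large satisfies $\|e\|_{\z}>\rho$ and $I(e)<0$. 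The Ambrosetti-Rabinowitz mountain pass theorem without the Palais-Smale condition then yields a sequence $\{u_{n}\}\subset\z$ with $I(u_{n})\to c_{\ast}>0$ and $I'(u_{n})\to 0$ in $\z^{\ast}$.

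Boundedness of $\{u_{n}\}$ in $\z$ follows from (A$_{3}$) via the standard estimate
$$
c_{\ast}+o(1)+o(\|u_{n}\|_{\z})\geq I(u_{n})-\tfrac{1}{\mu}\langle I'(u_{n}),u_{n}\rangle\geq\left(\tfrac{1}{2}-\tfrac{1}{\mu}\right)\|u_{n}\|_{\z}^{2}.
$$
The main obstacle is the loss of compactness caused by the translation invariance of \eqref{shrira-0} in both variables, so the step to be handled carefully is preventing the whole mass from escaping to infinity. I rule out vanishing as follows: if for some $r>0$ one had $\sup_{(x,y)\in\rrt}\int_{B_{r}(x,y)}u_{n}^{2}\,\dd x\,\dd y\to 0$, then Lemma \ref{cpt} would force $u_{n}\to 0$ in $L^{q}(\rrt)$ for every $q\in(2,4)$; combined with the uniform $L^{2}$ bound and (A$_{2}$), this would give $\int u_{n}f(u_{n})\to 0$ and $\int F(u_{n})\to 0$, whence $\|u_{n}\|_{\z}^{2}=\langle I'(u_{n}),u_{n}\rangle+\int u_{n}f(u_{n})\to 0$ and $I(u_{n})\to 0$, contradicting $c_{\ast}>0$.

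Consequently there exist $r,\delta>0$ and $\{(x_{n},y_{n})\}\subset\rrt$ with $\int_{B_{r}(x_{n},y_{n})}u_{n}^{2}\,\dd x\,\dd y\geq\delta$. Since $\|\cdot\|_{\z}$, $I$, and $I'$ are translation invariant, the sequence $\widetilde{u}_{n}(x,y):=u_{n}(x+x_{n},y+y_{n})$ is again a bounded Palais-Smale sequence at level $c_{\ast}$. Passing to a subsequence, $\widetilde{u}_{n}\rightharpoonup\ff$ weakly in $\z$ and, by Lemma \ref{complemma} (plus a further subsequence), $\widetilde{u}_{n}\to\ff$ strongly in $L^{q}_{\mathrm{loc}}(\rrt)$ for every $q\in[2,4)$ and pointwise a.e.; in particular $\int_{B_{r}(0)}\ff^{2}\,\dd x\,\dd y\geq\delta$, so $\ff\not\equiv 0$. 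To close the argument I pass to the limit in $\langle I'(\widetilde{u}_{n}),v\rangle=o(1)$: the quadratic part converges by weak convergence, while for the nonlinear part the growth bound (A$_{2}$), the a.e.\ convergence, and Vitali's convergence theorem give $\int f(\widetilde{u}_{n})v\to\int f(\ff)v$ for every $v\in C_{0}^{\infty}(\rrt)$; density of $C_{0}^{\infty}(\rrt)$ in $\z$ and continuity of $I'(\ff)\in\z^{\ast}$ extend this to all $v\in\z$. Hence $\ff$ is a nontrivial critical point of $I$, i.e.\ a nontrivial solution of \eqref{shrira-0} in $\z$.
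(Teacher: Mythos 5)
Your proposal is correct and follows essentially the same route as the paper: the same mountain pass functional and geometry, boundedness of the Palais--Smale sequence via (A$_3$), exclusion of vanishing through Lemma \ref{cpt}, and recovery of a nontrivial weak limit after translating by the concentration points and invoking the compact embedding of Lemma \ref{complemma}. The only differences are cosmetic (you spell out the Vitali/a.e.\ convergence step for passing to the limit in the nonlinearity, which the paper leaves implicit).
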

\begin{proof}
	We will use the well known mountain pass lemma without the Palais-Smale condition (see \cite{ar}).
Let
\begin{equation}
S(u)=\frac{1}{2}\|u\|_\z^2-\int_\rrt F(u)\;\dd x\dd y
\end{equation}
and note that critical points of $S$ are weak solutions of \eqref{shrira}.

We claim that, for some constant $C_0>0$,
\begin{equation}\label{limFt}
|F(u)|\leq \frac{1}{4}|u|^2+C_0|u|^p.
\end{equation}
Indeed, from assumption  (A$_2$), there exists $\varepsilon>0$ such $|f(u)|\leq\frac{1}{2} |u|$, when $|u|\leq\varepsilon$. Hence, in this case
\begin{equation}\label{limF2}
|F(u)|=\left|\int_0^uf(s)ds\right|\leq \frac{1}{4}|u|^2.
\end{equation}
On the other hand, choose a constant $\widetilde{C}>0$ such that $(1/\widetilde{C})^{1/(p-2)}\leq\varepsilon$. Thus, if $|u|\geq\varepsilon$, we immediately see that $|u|^2\leq \widetilde{C}|u|^p$. Hence, in this case,
\begin{equation}\label{limFp}
|F(u)|=\left|\int_0^uf(s)ds\right|\leq C(|u|^2+|u|^p)\leq C_0|u|^p.
\end{equation}
Collecting \eqref{limF2} and \eqref{limFp} yield \eqref{limFt}.

Now, an application of Lemma \ref{embed} gives, for any $u\in\z$,
\[
S(u)\geq\frac{1}{2}\|u\|_\z^2-\int_{\rr^2}(\frac{1}{4}|u|^2+C_0|u|^p)\dd x\dd y
\geq\frac{1}{4}\|u\|^2_\z-C_1\|u\|_\z^p,
\]
where $C_1>0$. Hence, there are $\delta>0$, independent of $u$, and $r>0$ small enough with the property that $S(u)\geq\delta$ if $\|u\|_\z=r$. On the other hand, it follows from assumption  (A$_4$) that $S(\lambda u)\to-\infty$ as $\lambda\to+\infty$. Thus there exists $e_1\in\z$ such that $\|e_1\|_\z>r$ and $S(e_1)<0$.

Let $d$ be the mountain-pass level, that is,
\[
d=\inf_{\gamma\in\Gamma}\max_{t\in[0,1]}S(\gamma(t))
\]
where
\[
\Gamma=\{\gamma\in C([0,1];\z);\;\gamma(0)=0,\;S(\gamma(1))<0\}.
\]
Clearly $d\geq \inf_{\|u\|_\z=r}S(u)>0$. Therefore, from the Mountain-Pass Lemma without the Palais-Smale condition there is a sequence $\{u_n\}\subset\z$ such that $S'(u_n)\to0$ and $S(u_n)\to d$, as $n\to+\infty$ (see e.g. \cite[Theorem 2.9]{wi}). For $n$ large enough, we obtain from assumption (A$_3$) that
\[
\left(\frac{1}{2}-\frac{1}{\mu}\right)\|u_n\|_\z^2\leq S(u_n)-\frac{1}{\mu}\langle S'(u_n),u_n\rangle
\leq d+o(1)+\|u_n\|_\z.
\]
Since $\mu>2$, we obtain that $\{u_n\}$ is bounded.

We now claim that there is no $r>0$ such that
\begin{equation}\label{vanishing1}
\lim_{n\to+\infty}\sup_{(x,y)\in\rrt}\int_{B_{r}(x,y)}|u_n|^2\dd x\dd y\to0.
\end{equation}
Indeed, assume the contrary, that is, \eqref{vanishing1} holds for some $r'>0$.
Then, from Lemma \ref{cpt},
\begin{equation}\label{contrdp}
\|u_n\|_{L^p}\to0, \qquad \mbox{for}\;\;\; p\in(2,4),
\end{equation}
and there is a sequence $\epsilon_n\to0$ such that
\begin{equation}\label{vanis1}
\begin{split}
d&=S(u_n)-\frac{1}{2}\langle S'(u_n),u_n\rangle_{L^2}+\epsilon_n\\
&=\int_{\rr^2}\left(\frac{1}{2}f(u_n)u_n-F(u_n)\right)\dd x\dd y+\epsilon_n\\
&\lesssim \|u_n\|_{L^2}^2+\|u_n\|_{L^p}^p.
\end{split}
\end{equation}
Since $d>0$, taking the limit in \eqref{vanis1}, we get a contradiction with \eqref{contrdp}.

 Therefore, by selecting if necessary a subsequence, we can assume that   there is a sequence $(x_n,y_n)\subset\rrt$ such that
\[
\|u_n\|_{L^2(B_1(x_n,y_n))}^2\geq \varrho/2>0,\qquad {\mbox{for all}}\;n,
\]
where
\[
\varrho=\lim_{n\to\infty}\sup_{(x,y)\in\rrt}\int_{B_1(x,y)}|u_n|^2\;\dd x\dd y\neq0.
\]
Then the functions $\ff_n(x,y)=u_n(x+x_n,y+y_n)$ satisfy
\begin{equation}\label{vanis3}
\|\ff_n\|_{L^2(B_1(0))}^2\geq \varrho/2>0
\end{equation}
and $\{\ff_n\}$ is bounded in $\z$. Thus, it converges to some $\ff\in\z$ weakly in $\z$ and strongly in $L^2_{{\rm loc}}(\rrt)$, by Lemma \ref{complemma}. From \eqref{vanis3} it is clear that $\ff\neq0$ and for every $\chi\in \z$, we have
\[
\langle S'(\ff),\chi\rangle=\lim_{n\to+\infty}\langle S'(\ff_n),\chi\rangle=0.
\]
This shows that $\ff$ is a nontrivial solution of \eqref{shrira-0} and completes the proof of the theorem.
\end{proof}
\begin{remark}
	To the best of our knowledge, the (non)existence of stationary solutions of \eqref{shrira-0} when $c=0$ and $p=4$ remains as an open problem.
\end{remark}

\subsection{Variational characterization of ground states.}
In this subsection we will show that the solution obtained in Theorem \ref{existence} minimizes some variational problems under the additional assumption:\\

\noindent(A$_5$) The function $t\mapsto t^{-1}\int_{\rrt} uf(tu)\;\dd x\dd y$ is strictly increasing on $(0,+\infty)$ and
$$
\lim_{t\to\infty}t^{-1}\int_{\rrt} uf(tu)\;\dd x\dd y=+\infty.\\
$$

Indeed, let
$$
I(u)=\langle S'(u), u\rangle=\|u\|_\z^2-\int_\rrt uf(u)\;\dd x\dd y.
$$
Consider the Nehari manifold
\[
\tilde{\Gamma}=\{u\in\z;\;I(u)=0,\;u\neq0\},
\]
and the minimization problem
\begin{equation}\label{gamma-tilde}
\tilde{d}=\inf_{u\in\tilde{\Gamma}}S(u)
\end{equation}
Also, let $d^*$ be the minimax value
\begin{equation}\label{gamma-star}
d^\ast=\inf_{u\in \z}\sup_{t\geq0}S(tu),
\end{equation}

\begin{lemma}\label{lema15}
For every $u\in\z\setminus\{0\}$ there exists a unique number $t_u>0$, such that $t_uu\in\tilde\Gamma$ and
$$
S(t_uu)=\max_{t\geq0}S(tu).
$$
In addition, the function $u\mapsto t_u$ is continuous and the map $u\mapsto t_uu$ is an homeomorphism from the unit sphere of $\z$ to $\tilde{\Gamma}$.
\end{lemma}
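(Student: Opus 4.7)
The plan is to reduce everything to the scalar function $g(t) := S(tu)$ for fixed $u \in \z\setminus\{0\}$. A direct computation gives
\[
g'(t) = t\|u\|_\z^2 - \int_\rrt u f(tu)\,\dd x\,\dd y = t\bigl(\|u\|_\z^2 - \psi_u(t)\bigr),
\]
where $\psi_u(t) := t^{-1}\int_\rrt u f(tu)\,\dd x\,\dd y$. Since $tg'(t) = I(tu)$, producing a $t_u > 0$ with $t_u u \in \tilde\Gamma$ amounts to solving the scalar equation $\psi_u(t) = \|u\|_\z^2$, and the maximum property will follow from the sign of $g'$.

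Next I study $\psi_u$ on $(0,\infty)$. Continuity of $\psi_u$ is a routine consequence of dominated convergence using the polynomial bound $|f(s)|\leq C(|s|+|s|^{p-1})$ of (A$_2$) together with the embeddings $\z\hookrightarrow L^2\cap L^p$ from Lemma \ref{embed}. For the limit at $0^+$, I invoke the ``$o(|s|)$'' part of (A$_2$): given $\varepsilon>0$, choose $\delta>0$ with $|f(s)|\leq\varepsilon|s|$ whenever $|s|\leq\delta$, and split $\int_\rrt |uf(tu)|$ according to whether $t|u(x,y)|\leq\delta$. The first region contributes at most $\varepsilon t\|u\|_{L^2}^2$, while on $A_t := \{t|u|>\delta\}$ the polynomial bound gives $\int_{A_t}|uf(tu)|\leq C\int_{A_t}(t|u|^2+t^{p-1}|u|^p)$. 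Dividing by $t$,
\[
\psi_u(t)\leq \varepsilon\|u\|_{L^2}^2+C\int_{A_t}|u|^2+Ct^{p-2}\int_{A_t}|u|^p,
\]
which can be made arbitrarily small as $t\to 0^+$ since $\chi_{A_t}\to 0$ a.e.\ and $|u|^2,|u|^p\in L^1$. Combined with (A$_5$), this shows that $\psi_u$ is continuous, strictly increasing, vanishes at $0^+$, and blows up at $+\infty$; hence a unique $t_u>0$ satisfies $\psi_u(t_u) = \|u\|_\z^2$, i.e., $t_uu\in\tilde\Gamma$. Strict monotonicity yields $g'>0$ on $(0,t_u)$ and $g'<0$ on $(t_u,\infty)$, so $S(t_uu) = g(t_u) = \max_{t\geq 0}g(t)$.

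For continuity of $u\mapsto t_u$, take $u_n\to u\neq 0$ in $\z$. I argue that $\{t_{u_n}\}$ is bounded above: otherwise, along a subsequence $t_{u_n}\to\infty$, and since $u_n\to u$ also in $L^2\cap L^p$ by Lemma \ref{embed}, a dominated-convergence passage to the limit combined with (A$_5$) would force $\psi_{u_n}(t_{u_n})\to+\infty$, contradicting $\psi_{u_n}(t_{u_n})=\|u_n\|_\z^2\to\|u\|_\z^2$. Similarly, the uniform version of the previous step shows $\{t_{u_n}\}$ is bounded away from $0$. Extracting a subsequence $t_{u_{n_k}}\to t^*\in(0,\infty)$ and passing to the limit in $\psi_{u_{n_k}}(t_{u_{n_k}})=\|u_{n_k}\|_\z^2$ yields $\psi_u(t^*)=\|u\|_\z^2$, so $t^*=t_u$ by uniqueness, and the whole sequence converges.

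The homeomorphism statement is then immediate: $\Phi(u) := t_uu$ is continuous from the unit sphere $\mathbb{S}\subset\z$ to $\tilde\Gamma$; given $v\in\tilde\Gamma$, uniqueness forces $t_{v/\|v\|_\z}=\|v\|_\z$, so $\Phi(v/\|v\|_\z)=v$, and injectivity is trivial. Hence $\Phi$ is a continuous bijection with continuous inverse $v\mapsto v/\|v\|_\z$. The main obstacle is the delicate behavior of $\psi_u$ near $t=0$ and the analogous uniform bound needed for continuity of $t_u$, both of which require carefully juggling the sublinear behavior of $f$ at the origin against its polynomial growth at infinity via the cutoff argument above.
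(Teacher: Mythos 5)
Your proof is correct and follows essentially the same route as the paper: differentiate $t\mapsto S(tu)$, use (A$_5$) to locate the unique zero of the derivative, and deduce the maximality from the resulting sign change. You additionally work out the small-$t$ behavior of $\psi_u$ (needed for existence of the zero crossing) and the continuity/homeomorphism details, which the paper dispatches by citing Lemma 4.1 of Willem's book, but the underlying argument is the same.
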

\begin{proof}
First we note that since
$$
S(tu)=\frac{t^2}{2}\|u\|_{\z}^2-\int_{\rr^2} F(tu)\dd x\dd y
$$
we have
$$
\frac{d}{dt}S(tu)=t\left( \|u\|_{\z}^2-t^{-1}\int_{\rrt} uf(tu)\;\dd x\dd y\right).
$$
Hence, from (A$_5$) the function $t\mapsto \frac{d}{d t}S(tu)=:g(t)$ vanishes at only one point $t_u>0$. In addition, since the function $t\mapsto -t^{-1}\int_{\rrt} uf(tu)\;\dd x\dd y$ is strictly decreasing on $(0,\infty)$, we see that $g(t)>0$ on $(0,t_u)$ and $g(t)<0$ on $(t_u,\infty)$, which means  that $t_u$ is a maximum point for $S(tu)$. The rest of the proof runs, for instance, as in \cite[Lemma 4.1]{wi}); so we omit the details.
\end{proof}

\begin{lemma}\label{equi}
Under the above notation, there hold $d=\tilde d=d^\ast$.
\end{lemma}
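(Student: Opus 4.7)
\textbf{Proof plan for Lemma \ref{equi}.} The plan is to establish the equalities $d^\ast=\tilde d$ and $d=\tilde d$ separately, exploiting the interplay between the mountain pass geometry, the Nehari manifold, and the one-dimensional slices $t\mapsto S(tu)$.

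For $d^\ast=\tilde d$, I would invoke Lemma \ref{lema15}: for each $u\in\z\setminus\{0\}$ there is a unique $t_u>0$ with $t_uu\in\tilde\Gamma$ and $\sup_{t\geq0}S(tu)=S(t_uu)$. On one hand $\sup_{t\geq0}S(tu)=S(t_uu)\geq\tilde d$, so taking the infimum over $u$ yields $d^\ast\geq\tilde d$; on the other hand, if $u\in\tilde\Gamma$ then uniqueness of $t_u$ forces $t_u=1$, so $S(u)=\sup_{t\geq0}S(tu)\geq d^\ast$, and minimizing over $\tilde\Gamma$ gives $\tilde d\geq d^\ast$.

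For $d\leq d^\ast$, given $u\in\z\setminus\{0\}$, assumption (A$_4$) (as already used in Theorem \ref{existence}) produces $T>0$ with $S(Tu)<0$, so $\gamma(s)=sTu$ lies in $\Gamma$ and
\[
d\leq\max_{s\in[0,1]}S(sTu)=\max_{\tau\in[0,T]}S(\tau u)\leq\sup_{\tau\geq0}S(\tau u);
\]
infimum over $u$ then yields $d\leq d^\ast$.

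The main effort is to prove $d\geq\tilde d$ by showing that every $\gamma\in\Gamma$ meets $\tilde\Gamma$. Three observations drive the argument: (i)~from (A$_2$) in the form $|uf(u)|\leq\varepsilon u^2+C_\varepsilon|u|^p$ together with Lemma \ref{embed}, one gets $I(u)\geq\tfrac12\|u\|_\z^2-C\|u\|_\z^p$ for $\varepsilon$ small, so there exists $r_0>0$ such that $I(u)>0$ whenever $0<\|u\|_\z\leq r_0$; (ii)~from $S(\gamma(1))<0$ and (A$_3$), $\int\gamma(1)f(\gamma(1))\geq\mu\int F(\gamma(1))>\tfrac{\mu}{2}\|\gamma(1)\|_\z^2>\|\gamma(1)\|_\z^2$ because $\mu>2$, which gives $I(\gamma(1))<0$; (iii)~set $t^\ast=\sup\{s\in[0,1]:I(\gamma(s))\geq0\}$. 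By (ii) and continuity of $I\circ\gamma$, $t^\ast<1$; by (i) applied for $s$ close to $0$ (where $\|\gamma(s)\|_\z\leq r_0$), $t^\ast>0$; by continuity $I(\gamma(t^\ast))=0$; and $\gamma(t^\ast)\neq0$, since otherwise (i) would force $I(\gamma(s))\geq0$ for all $s$ slightly larger than $t^\ast$, contradicting the definition of $t^\ast$. Hence $\gamma(t^\ast)\in\tilde\Gamma$ and $\max_{s\in[0,1]}S(\gamma(s))\geq S(\gamma(t^\ast))\geq\tilde d$; passing to the infimum over $\gamma\in\Gamma$ delivers $d\geq\tilde d$, and the three inequalities close up.

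The delicate point is step (iii): one cannot apply the intermediate value theorem to $I\circ\gamma$ naively because $I(0)=0$ and $\tilde\Gamma$ excludes the origin, so the crossing point must be kept away from $0$, which is achieved by coupling the continuity of $I\circ\gamma$ with the mountain pass lower bound from (i).
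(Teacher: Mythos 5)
Your proof is correct and follows essentially the same three-step route as the paper: paths in $\Gamma$ must cross the Nehari manifold (giving $d\geq\tilde d$), rays $t\mapsto tT u$ serve as admissible paths (giving $d\leq d^\ast$), and Lemma \ref{lema15} identifies $\tilde d$ with $d^\ast$. Your treatment of the crossing point $t^\ast$ (ruling out $\gamma(t^\ast)=0$ via the lower bound $I(u)\geq\tfrac12\|u\|_\z^2-C\|u\|_\z^p$) is in fact slightly more careful than the paper's one-line continuity appeal; the only nitpick is that the decay $S(tu)\to-\infty$ along an \emph{arbitrary} ray should be credited to (A$_3$) (or (A$_5$), as the paper does via Lemma \ref{lema15}) rather than to (A$_4$), which only supplies a single direction $\omega$.
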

\begin{proof}
We divide the proof into some steps.

\noindent{\bf Step 1.} $d\geq\tilde{d}$.

First we see that, as in the proof of Theorem \ref{existence}, $I(u)>0$ in a neighborhood of the origin, except at the origin.  Also,  we have from (A$_3$) that, for $v\in\z$,
\[
\begin{split}
2S(v)&=\|v\|_\z^2-2\int_{\rrt} F(v)\;\dd x\dd y>
\|v\|_\z^2-\mu\int_{\rrt} F(v)\;\dd x\dd y\\
&\geq
\|v\|_\z^2-\int_{\rrt} vf(v)\;\dd x\dd y=I(v).
\end{split}
\]
Now let $\gamma$ be in $\Gamma$.
So $I(\gamma(t))>0$, for small $t$ and $I(\gamma(1))<2S(\gamma(1))<0$. By continuity, $\gamma$ crosses $\tilde\Gamma$, that is, there exists $t_0\in(0,1)$ such that $\gamma(t_0)\in\tilde{\Gamma}$. Consequently, $\tilde d\leq S(\gamma(t_0))\leq\max_{t\in[0,1]}S(\gamma(t))$ and this proves Step 1.\\

\noindent{\bf Step 2.} $d\leq{d}^*$.

For any $u\in\z$, from the proof of Lemma \ref{lema15}, there exists $t_0$ sufficiently large such that $S(t_0u)<0$. By defining $\gamma_0(t)=tt_0u$ we immediately see that $\gamma_0\in\Gamma$. Thus,
$$
d\leq\max_{t\in[0,1]}S(\gamma_0(t))=\max_{t\in[0,1]}S(tt_0u)\leq \max_{t\geq0}S(tu)
$$
The arbitrariness of $u$ gives Step 2.\\

\noindent{\bf Step 3.} $\tilde d={d}^*$.

Given any $u\in\z\setminus\{0\}$ we can fin $t_u>0$ such that $t_uu\in\tilde{\Gamma}$ and
$$
\tilde{d}\leq \inf_{u\in\tilde{\Gamma}}S(u)\leq S(t_uu)=\max_{t\geq0}S(tu).
$$
This shows that $\tilde{d}\leq d^*$. On the other hand, for any $u\in \tilde{\Gamma}$, from Lemma \ref{lema15}, there is $v$ in the unit sphere of $\z$ such that $u=t_vv$. Thus,
$$
\inf_{u\in\z}S(t_uu)\leq S(t_vv)=S(u),
$$
and, consequently, $\inf_{u\in\z}S(t_uu)\leq \tilde{d}.$ At last, the relation
$$
d^*=\inf_{u\in\z}\max_{t\geq0}S(tu)=\inf_{u\in\z}S(t_uu)\leq\tilde{d}
$$
establishes Step 3.

By combining Steps 1,2, and 3, we have $d\leq d^*=\tilde{d}\leq d$ and the proof is completed.
\end{proof}

\begin{definition}
	A solution $\ff\in\z$ of \eqref{shrira-0} is called
	a ground state, if $\ff$ minimizes the action $S$ among all solutions of \eqref{shrira-0}.
\end{definition}

\begin{theorem}\label{existence-1}
Let (A$_1$)-(A$_5$) hold. There exists a minimizer $u\in \tilde\Gamma$ of problem \eqref{gamma-tilde}. In addition, $u$ is a ground state solution.
\end{theorem}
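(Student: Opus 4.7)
The plan is to realize the minimizer directly as the critical point produced by Theorem~\ref{existence}, rather than running a fresh minimization on the Nehari manifold. Concretely, let $\ff\in\z$ be the nontrivial solution of \eqref{shrira-0} furnished by Theorem~\ref{existence}, obtained as the weak $\z$-limit (after a translation in $(x,y)$) of a Palais-Smale sequence $\{\ff_n\}$ at the mountain-pass level $d$. Since $\ff\neq 0$ and $S'(\ff)=0$, one has $I(\ff)=\langle S'(\ff),\ff\rangle=0$, so $\ff\in\tilde\Gamma$. By the definition of $\tilde d$ and Lemma~\ref{equi}, this forces $S(\ff)\geq\tilde d=d$.

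The core step is the reverse inequality $S(\ff)\leq d$. The idea is to invoke Fatou's lemma on the ``Nehari-corrected'' energy density. Hypothesis (A$_3$) yields the pointwise bound
\[
\tfrac{1}{2}u f(u)-F(u)\geq\left(\tfrac{1}{2}-\tfrac{1}{\mu}\right)u f(u)\geq 0,
\]
so the integrand is nonnegative. From Lemma~\ref{complemma} the translated sequence converges to $\ff$ in $L^2_{{\rm loc}}(\rrt)$, hence (passing to a subsequence) almost everywhere on $\rrt$. Fatou's lemma then gives
\[
\int_\rrt\Big[\tfrac{1}{2}\ff f(\ff)-F(\ff)\Big]\,dx\,dy\leq\liminf_{n\to\infty}\int_\rrt\Big[\tfrac{1}{2}\ff_n f(\ff_n)-F(\ff_n)\Big]\,dx\,dy.
\]
The right-hand side equals $\lim_n\bigl[S(\ff_n)-\tfrac{1}{2}\langle S'(\ff_n),\ff_n\rangle\bigr]=d$, because $S(\ff_n)\to d$, $S'(\ff_n)\to 0$ in $\z^*$, and $\{\ff_n\}$ is bounded in $\z$ (so the duality pairing vanishes in the limit). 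Since $\langle S'(\ff),\ff\rangle=0$, the left-hand side is precisely $S(\ff)$. Combining with the previous paragraph we conclude $S(\ff)=\tilde d$, so $\ff$ is a minimizer of \eqref{gamma-tilde}.

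For the ground-state claim, I would simply observe that any nontrivial solution $v\in\z$ of \eqref{shrira-0} satisfies $I(v)=\langle S'(v),v\rangle=0$ and is therefore in $\tilde\Gamma$, whence $S(v)\geq\tilde d=S(\ff)$. Thus $\ff$ minimizes $S$ among all nontrivial solutions, which is the definition of a ground state.

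The main obstacle is the justification of the Fatou step: one must ensure almost-everywhere convergence of the translated sequence, which follows by a diagonal extraction from the $L^2_{{\rm loc}}$ compact embedding of Lemma~\ref{complemma}, and one must verify the vanishing of $\langle S'(\ff_n),\ff_n\rangle$, which reduces to the routine estimate $|\langle S'(\ff_n),\ff_n\rangle|\leq \|S'(\ff_n)\|_{\z^*}\|\ff_n\|_\z$ combined with the boundedness of the Palais-Smale sequence; translation invariance of $S$ guarantees that the translation does not spoil the Palais-Smale property. All other pieces—continuity of $u\mapsto\int F(u)$ and of the duality pairing under growth (A$_2$)—are standard consequences of the embedding $\z\hookrightarrow L^2\cap L^{p+2}$ from Lemma~\ref{embed}.
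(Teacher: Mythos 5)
Your proposal is correct and follows essentially the same route as the paper: both take the translated Palais--Smale sequence at the mountain-pass level, note that the limit $u$ lies in $\tilde\Gamma$ so $S(u)\geq\tilde d$, and obtain the reverse inequality by applying Fatou's lemma to the nonnegative density $\tfrac12 uf(u)-F(u)$ together with $d=\tilde d$ from Lemma~\ref{equi}. Your explicit remarks on the a.e.\ convergence and on the nonnegativity coming from (A$_3$) merely spell out details the paper leaves implicit.
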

\begin{proof}
As in the proof of Theorem \ref{existence}, we can take a bounded Palais-Smale sequence $\{u_n\}\subset\z$ and a solution $u\in\z\setminus\{0\}$ such that $S(u_n)\to d$, $S'(u_n)\to0$, $S'(u)=0$ and $u_n\to u$ $a.e.$ and in $L^p_{{\rm loc}}(\rrt)$, as $n\to+\infty$. This immediately implies that $u\in\tilde{\Gamma}$ and
\begin{equation}\label{dtildee1}
\tilde{d}=\inf_{v\in\tilde{\Gamma}}S(v)\leq S(u).
\end{equation}
On the other hand, because $I(u_n)\to0$, Lemma \ref{equi} and Fatou's lemma, yield
\begin{equation}\label{dtildee2}
\begin{split}
\tilde{d}&=d=\liminf_{n\to\infty}\left(S(u_n)-\frac{1}{2}I(u_n)\right)=\liminf_{n\to\infty}\int_\rrt\left(\frac{1}{2}u_nf(u_n)-F(u_n)\right)\dd x\dd y\\
& \geq \int_\rrt\left(\frac{1}{2}uf(u)-F(u)\right)\dd x\dd y=S(u)-\frac{1}{2}I(u)=S(u).
\end{split}
\end{equation}
From \eqref{dtildee1} and \eqref{dtildee2} we deduce that $\tilde{d}=S(u)$.  Finally, if $v$ is any critical point of $S$, then $v\in\tilde{\Gamma}$ and  $S(u)\leq S(v)$, which means that $u$ is a ground state.
\end{proof}

\begin{theorem}\label{equi-theo-1}
Let (A$_1$)-(A$_5$) hold. Suppose also that $f\in C^1(\rr)$ and
\begin{equation}\label{new-a5}
\int_\rrt uf(u)\;\dd x\dd y< \int_\rrt u^2f'(u)\;\dd x\dd y.
\end{equation}
 Then for any nonzero $u\in\z$, the following assertions are equivalent:
\begin{enumerate}[(i)]
\item $u$ is a ground state;
\item $I(u)=0$ and $\inf\{G(v);\;v\in\tilde\Gamma\}=\tilde d=G(u)$, where
\[
G(u)=\int_\rrt \left(\frac{1}{2}uf(u)-F(u)\right)\dd x\dd y.
\]
\end{enumerate}
\end{theorem}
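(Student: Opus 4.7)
The proof will hinge on the observation that on the Nehari manifold $\tilde\Gamma$ the action $S$ and the functional $G$ coincide. Indeed, if $I(u)=0$, then $\|u\|_\z^2=\int_\rrt uf(u)\,\dd x\dd y$, so
\[
S(u)=\tfrac{1}{2}\|u\|_\z^2-\int_\rrt F(u)\,\dd x\dd y=\int_\rrt\bigl(\tfrac{1}{2}uf(u)-F(u)\bigr)\,\dd x\dd y=G(u).
\]
In particular $\inf_{\tilde\Gamma}G=\inf_{\tilde\Gamma}S=\tilde d$.

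For (i)$\Rightarrow$(ii), let $u$ be a ground state. Then $S'(u)=0$, which in turn gives $I(u)=\langle S'(u),u\rangle=0$, so $u\in\tilde\Gamma$. Since every nontrivial solution of \eqref{shrira-0} lies in $\tilde\Gamma$, and Theorem \ref{existence-1} produces a ground state realizing the value $\tilde d$, the ground-state action must equal $\tilde d$. Hence $G(u)=S(u)=\tilde d=\inf_{v\in\tilde\Gamma}G(v)$.

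The more substantive direction is (ii)$\Rightarrow$(i). Assume $I(u)=0$ and $G(u)=\tilde d$. Since $u\in\tilde\Gamma$ is a minimizer of $G$ subject to the constraint $I=0$, I would like to apply the Lagrange multiplier rule. For this I must verify that $I'(u)\neq 0$ on $\tilde\Gamma$; computing
\[
\langle I'(u),u\rangle=2\|u\|_\z^2-\int_\rrt\bigl(uf(u)+u^2f'(u)\bigr)\,\dd x\dd y=\int_\rrt\bigl(uf(u)-u^2f'(u)\bigr)\,\dd x\dd y,
\]
which is strictly negative by hypothesis \eqref{new-a5}. Thus there exists $\lambda\in\rr$ with $G'(u)=\lambda I'(u)$. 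Testing this identity against $u$ itself and using \eqref{new-a5} together with $I(u)=0$, a short calculation yields
\[
\tfrac{1}{2}\int_\rrt\bigl(u^2f'(u)-uf(u)\bigr)\,\dd x\dd y=-\lambda\int_\rrt\bigl(u^2f'(u)-uf(u)\bigr)\,\dd x\dd y,
\]
forcing $\lambda=-1/2$. Substituting $\lambda=-1/2$ back into $G'(u)=\lambda I'(u)$ and testing against an arbitrary $v\in\z$, the terms involving $f'(u)$ cancel and one obtains $(u,v)_\z=\int_\rrt f(u)v\,\dd x\dd y$, i.e.\ $S'(u)=0$. Therefore $u$ is a nontrivial solution of \eqref{shrira-0} with $S(u)=G(u)=\tilde d$. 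Since every nontrivial critical point of $S$ lies in $\tilde\Gamma$ and $\tilde d=\inf_{\tilde\Gamma}S$, the value $S(u)$ is minimal among all nontrivial solutions, so $u$ is a ground state.

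The main obstacle is the Lagrange-multiplier step: the computation must be organized so that the assumption \eqref{new-a5} is used simultaneously to (a) guarantee $I'(u)\neq 0$, (b) determine the exact value $\lambda=-1/2$, and (c) produce the cancellation of the $f'(u)$-terms when testing against $v$. All three uses rely on the strict inequality $\int uf(u)<\int u^2f'(u)$, so the hypothesis is precisely what is needed to make this argument work.
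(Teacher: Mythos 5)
Your proof is correct and follows essentially the same route as the paper: both directions rest on the identity $S=G$ on $\tilde\Gamma$ and, for (ii)$\Rightarrow$(i), on a Lagrange multiplier argument whose nondegeneracy $\langle I'(u),u\rangle<0$ is supplied by \eqref{new-a5}. The only cosmetic difference is that you write the multiplier relation as $G'(u)=\lambda I'(u)$ and compute $\lambda=-1/2$, while the paper writes $S'(u)=\theta I'(u)$ and finds $\theta=0$; since $G=S-\tfrac{1}{2}I$ these are the same statement.
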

\begin{proof}
(i)$\Rightarrow$(ii). If $u$ is a ground state, we have $S'(u)=0$, which implies that $I(u)=0$. On the other hand, for any $u\in\tilde\Gamma$,
\begin{equation}\label{identity-0}
S(u)=S(u)-\frac{1}{2}I(u)=\int_\rrt \left(\frac{1}{2}uf(u)-F(u)\right)\dd x\dd y=G(u).
\end{equation}
Hence,
$$
\tilde{d}=S(u)=\inf_{v\in\tilde{\Gamma}}S(v)=\inf_{v\in\tilde{\Gamma}}G(v).
$$

(ii)$\Rightarrow$(i). Let $u\in\z$ satisfy (ii). Then, by using \eqref{identity-0}, there is a Lagrange multiplier $\theta$ such that $\theta I'(u)=S'(u)$. Therefore,
\[
\theta\langle I'(u),u\rangle=\langle S'(u),u\rangle=I(u)=0.
\]
But,
\[\begin{split}
\langle I'(u),u\rangle&=2\|u\|_\z^2-\int_\rrt f'(u) u^2\;\dd x\dd y-\int_\rrt f(u) u\;\dd x\dd y\\
&=2I(u)+\int_\rrt f(u) u\;\dd x\dd y-\int_\rrt f'(u) u^2\;\dd x\dd y\\
&=\int_\rrt f(u) u\;\dd x\dd y-\int_\rrt f'(u) u^2\;\dd x\dd y\\
&<0,
\end{split}\]
where we used  \eqref{new-a5} in the last inequality. Therefore $\theta=0$ and $S'(u)=0$, which implies that $u$ is a ground state.
\end{proof}

\section{Regularity and Decay}\label{sec3}

In this section we will discuss some regularity and spatially decay properties of solitary waves. For  the simplicity, throughout  the section, we assume  $c=1$ and $f$, satisfies the grow condition $|f(u)|\leq C|u|^{p-1}$, $p\in(2,4)$.

\subsection{Regularity}
The difficulty in studying  regularity properties of the solutions of \eqref{shrira-0} or \eqref{shrira}, comes from the fact that the operator $\mathscr{H}\Delta$ is nonlocal and non-isotropic. Here, we will adopt the strategy put forward in \cite{dbs1} (see also \cite{maris} and \cite{za} for  applications to multi-dimensional models). The following  H\"{o}rmander-Mikhlin type theorem will be useful.

\begin{lemma}[Lizorkin lemma]\label{lizolemma}
Let $\Lambda:\rr^n\to\rr$ be a $C^n$ function for $|\xi_j|>0$,$j=1,\ldots,n$.  Assume that there exists a constant $M>0$ such that
$$
\left| \xi_1^{k_1}\ldots\xi_n^{k_n} \frac{\partial^k\Lambda(\xi)}{\partial\xi_1^{k_1}\ldots \partial\xi_n^{k_n}} \right|\leq M,
$$
where $k_i$ take the values $0$ or $1$ and $k=k_1+\ldots k_n=0,1,\ldots,n$. Then $\Lambda$ is a Fourier Multiplier on $L^q(\rr^n)$, $1<q<\infty$.
\end{lemma}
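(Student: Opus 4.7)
The plan is to deduce Lizorkin's multiplier theorem from the multi-parameter Marcinkiewicz multiplier theorem, after a symmetry reduction to one orthant of $\rr^n$. The hypothesis on the mixed derivatives of $\Lambda$ is exactly tailored to the Marcinkiewicz setting, where one controls $\Lambda$ on each dyadic product box.

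First, I would split $\rr^n\setminus\{\xi_1\cdots\xi_n=0\}$ into its $2^n$ open orthants $\mathcal{O}_\varepsilon=\{\xi\in\rr^n:\mathrm{sgn}(\xi_j)=\varepsilon_j,\,j=1,\ldots,n\}$, $\varepsilon\in\{+,-\}^n$, and decompose $\Lambda=\sum_\varepsilon\mathbf{1}_{\mathcal{O}_\varepsilon}\Lambda$. Since the characteristic function of each half-line $\{\xi_j>0\}$ is an $L^q(\rr)$ multiplier for $1<q<\infty$ (it is, up to a constant, the one-dimensional Riesz projection associated with the Hilbert transform), tensoring across the $n$ variables shows that each indicator $\mathbf{1}_{\mathcal{O}_\varepsilon}$ is itself a Fourier multiplier on $L^q(\rr^n)$. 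This lets me assume throughout the rest of the argument that $\Lambda$ is supported in the positive orthant.

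Second, I would perform a dyadic partition of that orthant into the boxes $R_{\vec k}=\prod_{j=1}^n[2^{k_j},2^{k_j+1}]$, $\vec k\in\mathbb{Z}^n$. On each $R_{\vec k}$, the hypothesis translates into the uniform estimate
\[
|\partial^\alpha\Lambda(\xi)|\lesssim M\prod_{j:\,\alpha_j=1}2^{-k_j}\qquad\text{for every }\alpha\in\{0,1\}^n,\ \xi\in R_{\vec k},
\]
which are precisely the bounds required by the multi-parameter Marcinkiewicz multiplier theorem. I would then invoke that theorem to conclude that the operator with symbol $\Lambda$ is bounded on $L^q(\rr^n)$ for $1<q<\infty$.

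The main obstacle lies in this last step, since the Marcinkiewicz theorem itself rests on product-type Littlewood--Paley theory: one needs the $L^q$-boundedness of the square function associated with dyadic decomposition in each variable separately. That square-function inequality is obtained by iterating the one-dimensional Littlewood--Paley inequality and invoking Khintchine's inequality to pass between square functions and Rademacher-randomized sums. Once the dyadic pieces $\Lambda_{\vec k}:=\mathbf{1}_{R_{\vec k}}\Lambda$ are controlled by the uniform bounds displayed above, the randomized decomposition assembles them into a bounded operator on $L^q(\rr^n)$ with norm controlled by a constant multiple of $M$, which is the desired conclusion.
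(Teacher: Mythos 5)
The paper offers no proof of this lemma at all: it is quoted as a known result and the ``proof'' is a citation to Lizorkin's 1967 article. Your proposal, by contrast, actually derives the statement, and the derivation is correct: the version of Lizorkin's lemma stated here (with $k_i\in\{0,1\}$ and $L^q\to L^q$ boundedness) is precisely the corollary of the multi-parameter Marcinkiewicz multiplier theorem, and your verification of its hypotheses is right --- the $k=0$ case of the assumption gives $\Lambda\in L^\infty$, and on a dyadic box $\prod_j[2^{k_j},2^{k_j+1}]$ the bound $|\partial^\alpha\Lambda|\leq M\prod_{j:\alpha_j=1}|\xi_j|^{-1}\leq M\prod_{j:\alpha_j=1}2^{-k_j}$ integrates to the uniform bound $\int_{R_{\vec k}}|\partial^\alpha\Lambda|\lesssim M$ required by Marcinkiewicz (in the variables where $\alpha_j=1$, uniformly in the others). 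Two small remarks. First, the orthant decomposition via iterated Riesz projections is harmless but redundant: the product dyadic decomposition underlying the Marcinkiewicz theorem already uses rectangles $\prod_j\pm[2^{k_j},2^{k_j+1}]$ and so treats all $2^n$ orthants simultaneously; alternatively, if you do keep the reduction, note that $\mathbf{1}_{\mathcal{O}_\varepsilon}\Lambda$ still satisfies the Marcinkiewicz hypotheses (it is smooth on dyadic rectangles inside the orthant and vanishes on the rest), so no circularity arises. Second, be aware that what you prove is the restricted form quoted in the paper; Lizorkin's original theorem is more general (it also yields $L^p\to L^q$ estimates for $p\leq q$ under weaker homogeneity-type hypotheses), so your argument recovers exactly what the paper uses but not the full strength of the cited reference. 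For the purposes of Theorem \ref{regularity}, where only the $L^q\to L^q$ statement for the three explicit symbols is needed, your route is entirely adequate and has the advantage of being self-contained modulo standard product Littlewood--Paley theory.
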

\begin{proof}
See \cite{lizorkin}.
\end{proof}

Now we can proof the following.

\begin{theorem}[Regularity]\label{regularity}
Assume $p\in(2,4)$.
Any solitary-wave solution $\ff\in\z$ of \eqref{main-shrira} belongs to $W^{1,r}(\rrt)$, where  $r\in(1,\infty)$.
Moreover $\ff\in W^{m+1,r}(\rrt)$, for $m=1,2$, if $f\in C^m(\rr)$. In particular, if $f(u)=u^2$ then $\ff\in H^{\infty}(\rr^2)$.
\end{theorem}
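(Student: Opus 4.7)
The plan is to rewrite \eqref{shrira} as a fixed-point identity $\ff = T(f(\ff))$ for a suitable Fourier multiplier $T$ and then bootstrap on Lebesgue indices, the main tool being Lizorkin's lemma.

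With $c=1$, and using that $\h$, $\partial_x^{-1}$, $\Delta$ have symbols $-i\,\mathrm{sgn}(\xi)$, $(i\xi)^{-1}$, $-(\xi^2+\eta^2)$ respectively, the Fourier transform of \eqref{shrira} reduces to
\[
(1 + |\xi| + \eta^2/|\xi|)\,\widehat{\ff}(\xi,\eta) = \widehat{f(\ff)}(\xi,\eta),
\]
so $\ff = T(f(\ff))$ with $T$ the Fourier multiplier whose symbol is the positive bounded function $m(\xi,\eta) = (1+|\xi|+\eta^2/|\xi|)^{-1}$. The pivotal step is to verify that the symbols
\[
\sigma_1(\xi,\eta)=\frac{i\xi|\xi|}{|\xi|+\xi^2+\eta^2},\qquad \sigma_2(\xi,\eta)=\frac{i\eta|\xi|}{|\xi|+\xi^2+\eta^2}
\]
associated with $\partial_x T$ and $\partial_y T$ satisfy the hypotheses of Lemma \ref{lizolemma}; the pointwise bounds $|\sigma_1|\leq 1$ and $|\sigma_2|\leq 1/2$ are immediate, while the mixed estimates $|\xi^{k_1}\eta^{k_2}\partial_\xi^{k_1}\partial_\eta^{k_2}\sigma_j|\lesssim 1$ for $k_i\in\{0,1\}$ are obtained by a regional analysis splitting $\xi^2\geq\eta^2$ from $\xi^2\leq\eta^2$. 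This delivers the boundedness of $T$, $\partial_x T$, $\partial_y T$ on $L^r(\rrt)$ for every $r\in(1,\infty)$.

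Granting this, one bootstraps as follows. From Lemma \ref{embed} and $|f(u)|\leq C|u|^{p-1}$, $\ff\in L^{p+2}$ yields $f(\ff)\in L^{(p+2)/(p-1)}$, and applying $T$, $\partial_x T$, $\partial_y T$ gives $\ff\in W^{1,(p+2)/(p-1)}(\rrt)$. Since $p<4$ implies $(p+2)/(p-1)>2$, the Sobolev embedding $W^{1,q}(\rrt)\hookrightarrow L^\infty(\rrt)$ for $q>2$ gives $\ff\in L^\infty$. Interpolation with $\ff\in L^2$ then places $\ff$ in every $L^s$ with $s\in[2,\infty]$, whence $|f(\ff)|\leq C\|\ff\|_\infty^{p-2}|\ff|$ yields $f(\ff)\in L^r$ for all $r\in(1,\infty)$ (the small-$r$ range being handled by the integrability of $\ff$ obtained from the convolution representation $\ff = \mathcal{K}\ast f(\ff)$ with $\mathcal{K}=\mathcal{F}^{-1}m$). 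A second application of the multiplier bounds closes the loop: $\ff\in W^{1,r}(\rrt)$ for every $r\in(1,\infty)$. For $f\in C^m$, differentiating $\ff=T(f(\ff))$ gives $\partial_i\ff = T(f'(\ff)\partial_i\ff)$; since $f'(\ff)\in L^\infty$, the right-hand side lies in $L^r$, and the multiplier bounds upgrade $\partial_i\ff$ to $W^{1,r}$, yielding $\ff\in W^{2,r}$. The $C^2$ hypothesis brings in an extra term $f''(\ff)(\partial_i\ff)(\partial_j\ff)\in L^r$ controlled by Hölder and the previous step, delivering $\ff\in W^{3,r}$. For $f(u)=u^2$, the same scheme runs in the scale $H^s$: since $m\lesssim(|\xi|+|\eta|)^{-1}$ for large frequencies, $T$ smooths by one isotropic derivative, and $H^s$ is an algebra for $s>1$ in $\rrt$; inductively, $\ff\in H^s\Rightarrow \ff^2\in H^s\Rightarrow \ff = T(\ff^2)\in H^{s+1}$, whence $\ff\in H^\infty$.

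The main obstacle is the verification of Lizorkin's hypotheses for $\sigma_1$ and $\sigma_2$: differentiating $|\xi|$ and $\eta^2/|\xi|$ in the denominator introduces terms singular as $\xi\to 0$, and the anisotropic competition between $|\xi|$ and $\eta^2/|\xi|$ forces the regional splitting mentioned above to obtain uniform control of $\xi^{k_1}\eta^{k_2}\partial_\xi^{k_1}\partial_\eta^{k_2}\sigma_j$. This anisotropy is precisely what distinguishes the Shrira setting from the one-dimensional Benjamin--Ono equation, where the corresponding symbol analysis is essentially trivial.
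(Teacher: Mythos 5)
Your overall architecture — rewrite \eqref{shrira} as $\ff=T(f(\ff))$ with $T$ the multiplier of symbol $|\xi|/(|\xi|+\xi^2+\eta^2)$, verify Lizorkin's hypotheses for $\sigma_1,\sigma_2$, bootstrap on Lebesgue exponents, and then use the smoothing of $T$ (equivalently, Riesz's theorem applied to $-\Delta\ff=\h(f(\ff))_x-\h\ff_x$) for the higher regularity and the $H^\infty$ claim — is exactly the paper's. But the bootstrap contains a genuine gap at its very first step. You start from ``$\ff\in L^{p+2}$ yields $f(\ff)\in L^{(p+2)/(p-1)}$'' and then use $(p+2)/(p-1)>2$ to land in $W^{1,q}$ with $q>2$ and hence in $L^\infty$ by Sobolev embedding. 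However, Lemma \ref{embed} only gives $\z\hookrightarrow L^q$ for $q\in[2,4]$; since the nonlinearity exponent satisfies $p\in(2,4)$, you would need $\ff\in L^{p+2}$ with $p+2>4$, which is \emph{not} provided by the embedding (indeed it is exactly the supercritical range). The correct starting point is only $f(\ff)\in L^{4/(p-1)}$, and $4/(p-1)>2$ holds precisely when $p<3$. So your one-shot passage to $L^\infty$ works only for $p\in(2,3)$; for $p\in[3,4)$ the first application of the multiplier theorem leaves you with $\ff,\nabla\ff\in L^{4/(p-1)}$ with $4/(p-1)\le 2$, and $W^{1,q}\hookrightarrow L^\infty$ is unavailable.

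This is not a cosmetic issue: closing it is the hard part of the theorem. Even after one Gagliardo--Nirenberg upgrade ($\ff\in L^{4/(p-3)}$, hence $f(\ff)\in L^{4/((p-1)(p-3))}$) one only reaches an exponent $\ge 2$ when $p\le 2+\sqrt 3$; as $p\uparrow 4$ an unbounded number of iterations is required, and one must prove that the scheme terminates. The paper does this by introducing the polynomials $P_n(p)=(p-1)P_{n-1}(p)-2$, showing their roots $p_n\in(3,4)$ increase to $4$, and running the induction on the intervals $(p_{n-1},p_n]$; a separate downward iteration $p_n=p_{n-1}/(p-1)$ is also needed to cover the range $r$ close to $1$ (your appeal to ``integrability of $\ff$ from the convolution representation'' cannot replace it, since in fact $\ff\notin L^1$ because $\hat k$ is discontinuous at the origin). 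You should either restrict your first step to the honest range $q\in[2/(p-1),4/(p-1)]$ and supply the case analysis and convergence argument for $p\in[3,4)$, or find another mechanism to reach $L^\infty$; as written the proof only covers $p\in(2,3)$.
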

\begin{proof}
 We are left  to prove the regularity result for the nonlinear equation
\begin{equation}\label{elliptic}
\ff_x+\mathscr{H}\Delta\ff=\left(f(\ff)\right)_x.
\end{equation}
Let $\ff\in\z$ be a solution of \eqref{elliptic}.
By Lemma \ref{embed}, one has $\mathscr{Z}\hookrightarrow L^r(\rr^2)$, $r\in[2,4]$, and therefore $f(\ff)\in L^{\frac{r}{p-1}}(\rr^2)$. It can be easily checked  that multipliers $\frac{|\xi|}{|\xi|+\xi^2+\eta^2}$, $\frac{\xi|\xi|}{|\xi|+\xi^2+\eta^2}$ and $\frac{|\xi|\eta}{|\xi|+\xi^2+\eta^2}$ satisfy   the assumptions in Lemma \ref{lizolemma}. Hence   $\ff$, $\ff_x$, $\ff_y\in L^{q}(\rr^2)$, where
\begin{equation}\label{cases-1}
q\in
\begin{cases}
\left[\frac{2}{p-1},\frac{4}{p-1}\right],&p\in(2,3),\\
(1,2],&p=3,\\
\Big(1,\frac{4}{p-1}\Big],&p\in(3,4).
\end{cases}
\end{equation}

We now divide the proof into three cases.

\noindent {\bf Case 1. $2<p<3$.}  From \eqref{cases-1} we see, in particular, that $\ff,\nabla\ff\in L^2(\rr^2)$. In view of the Sobolev embedding  $H^1(\rr^2)\hookrightarrow L^r(\rr^2)$, $r\in[2,\infty)$, we deduce that $\ff\in L^r(\rr^2)$, $r\in[\frac{2}{p-1},\infty)$. As a consequence, $f(\ff)\in L^r(\rr^2)$, $r\in[\frac{2}{p-1},\infty)$. Thus, we can apply Lemma \ref{lizolemma} to conclude that $\nabla\ff\in L^r(\rr^2)$, $r\in[\frac{2}{p-1},\infty)$ and, consequently, $\ff\in W^{1,r}$ with $r\in[\frac{2}{p-1},\infty)$.

Now, let $p_0=\frac{2}{p-1}$ and define $p_1=\frac{p_0}{p-1}$. It is clear that $f(\ff)\in L^{p_1}(\rrt)$ and $p_1\leq1$ if and only if $p\geq\sqrt2+1$.  Hence, if $p\geq\sqrt2+1$ we can apply Lemma \ref{lizolemma} to conclude that $\ff,\nabla\ff\in  L^r(\rr^2)$, $r\in(1,\infty)$. Assume now $p<\sqrt2+1$ and define, inductively, $p_n=\frac{p_{n-1}}{p-1}$. Note that $0<p_n<p_{n-1}$ and $p_n\leq1$ if and only if $p\geq {2}^{\frac{1}{n+1}}+1$. The result then follows, using Lemma \ref{lizolemma} because $p_n\to0$ and $ {2}^{\frac{1}{n+1}}+1\to2$, as $n\to\infty$.

\noindent {\bf Case 2. $p=3$.} Here we also have $\ff,\nabla\ff\in L^2(\rr^2)$. So, as in Case 1 we obtain $\ff,\nabla\ff\in L^r(\rr^2)$, $r\in[2,\infty)$, which combined with \eqref{cases-1} gives the desired.

\noindent {\bf Case 3. $3<p<4$.}
Here we have $\ff, \ff_x, \ff_y\in L^{{\frac{4}{p-1}}}(\rr^2)$. By using the Gagliardo-Nirenberg inequality
\begin{equation}\label{glusual}
\|u\|_{L^s}\leq C\|\nabla u\|_{L^r}^\theta\|u\|_{L^q}^{1-\theta}, \qquad \frac{1}{s}=\frac{1-\theta}{q}+\theta\Big(\frac{1}{r}-\frac{1}{2}\Big), \;\;\theta\in[0,1],
\end{equation}
with $\theta=1$ and $r=\frac{4}{p-1}$, we deduce that $\ff\in  L^{{\frac{4}{p-3}}}(\rr^2)$ and, consequently, $f(\ff)\in L^{\frac{4}{(p-1)(p-3)}}(\rr^2)$. An application of Lemma \ref{lizolemma} yields  $\ff,\nabla\ff\in L^{{\frac{4}{(p-1)(p-3)}}}(\rr^2)$. Now we need to use an iteration process. Indeed, let $p_1$ be the positive root of the equation $(p-1)(p-3)-2=0$, that is, $p_1=2+\sqrt3\in(3,4)$. Since the function $\mu_1(p)=4/(p-1)(p-3)$ is strictly decreasing on the interval $(3,4)$ and $\mu_1(p_1)=2$, we have that $\mu_1(p)\geq2$ in $(3,p_1]$. Consequently, interpolating between $L^{\frac{4}{p-1}}$ and $L^{\mu_1(p)}$, $p\in(3,p_1]$ we obtain $\ff,\nabla\ff\in L^2$. By proceeding as in Case 1 we conclude the result if $p\in(3,p_1]$.

Assume now $p\in(p_1,4)$. Since  $\ff,\nabla\ff\in L^{{\frac{4}{(p-1)(p-3)}}}(\rr^2)$, we can use \eqref{glusual} to conclude that $\ff\in L^{\frac{4}{(p-1)(p-3)-2}}$. It is to be noted that because $p\in(p_1,4)$ we have $(p-1)(p-3)-2>0$. Thus we obtain, $f(\ff)\in L^{\frac{4}{(p-1)^2(p-3)-2(p-1)}}$. Lemma \ref{lizolemma} again implies $\ff,\nabla\ff\in L^{\frac{4}{(p-1)^2(p-3)-2(p-1)}}$. Let $P_2$ be the polynomial $P_2(p)=(p-1)^2(p-3)-2(p-1)-2$. Since $P_2(p_1)=-2$ and $P_2(4)=1$, it follows that $P_2$ has a root in the interval $(p_1,4)$, which we shall call $p_2$. Again, since the function $\mu_2(p)=4/[(p-1)^2(p-3)-2(p-1)]$ is strictly decreasing on the interval $(p_1,4)$ and $\mu_2(p_2)=2$, we deduce that $\mu_2(p)\geq2$ on the interval  $(p_1,p_2)$. Another interpolation gives $\ff,\nabla\ff\in L^2$ and the proof is also completed for $p\in (p_1,p_2]$.

Following this process, we define inductively the polynomial $P_n$ in the following way: having defined $P_{n-1}$, we define $P_n$ by the relation $P_{n}(p)=(p-1)P_{n-1}(p)-2$. Precisely, $P_n$ has the expression
$$
P_n(p)=(p-1)^n(p-3)-2(p-1)^{n-1}-2(p-1)^{n-2}-\ldots-2(p-1)-2.
$$
Also, inductively, if $p_{n-1}$ is the root of $P_{n-1}$ in the interval $(3,4)$, noting that $P_n(p_{n-1})=-2$ and $P_n(4)=1$, we define $p_n$ to be the root of $P_n$ on the interval $(p_{n-1},4)$.
Note $\{p_n\}$ is increasing and bounded by $4$. Thus, in order to complete the proof it suffices to show that the sequence $\{p_n\}$ converges to $4$, as $n\to\infty$. But this follows at once because $P_n(p)\to-\infty$ for any $p\in(0,4)$. This completes the proof in Case 3.

Now suppose that $f$ is $C^1$. Then,  for all $2<p<4$, $f'(\ff)\ff_x\in L^q(\rrt)$, where $1< q<\infty$. On the other hand, \eqref{elliptic} is equivalent to
\[
-\Delta\ff=\h\left(f(\ff)\right)_x-\h\ff_x.
\]
Thus $\Delta\ff\in L^q(\rrt)$ by Riesz's theorem \cite{riesz} (see also \cite{duoandikoetxea}),   where $1< q<\infty$. The proof is now completed by iteration. The case of $f\in C^2$ is similar.
\end{proof}

\begin{remark}\label{vanizero}
It can be seen from Theorem \ref{regularity}, Sobolev's embedding $W^{1,r}(\rrt)\hookrightarrow L^\infty(\rrt)$, $r>2$, and Morrey's inequality that any solitary wave $\ff\in\z$ of \eqref{shrira-0} indeed belongs to $L^\infty(\rrt)\cap C(\rrt)$ and vanishes at infinity.
\end{remark}

Next we prove that the high regularity of $f$ is reflected in the analyticity of the traveling waves.

\begin{theorem}[Analyticity]\label{analyticity}
Suppose that $f\in C^\infty(\rr)$ and, for any $R>0$, there exists $M>0$ such that
\[
|f^{(n)}(x)|\leq M^{n+1}n!,\qquad \mbox{for\;all}\,\, |x|<R,\,\, n\in\mathbb{N}.
\]
Then any solitary wave solution $\ff\in\z\cap H^\infty(\rr^2)$ of \eqref{main-shrira} is real analytic in $\rr^2$.
\end{theorem}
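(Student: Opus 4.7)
The plan is to adapt the inductive approach standard for proving analyticity of solitary waves of dispersive equations (cf.\ the argument in \cite{dbs1}) to the nonlocal anisotropic operator $\h\partial_x^{-1}\Delta$. First I would record that by Theorem \ref{regularity} and its $C^\infty$ bootstrap, together with Remark \ref{vanizero}, $\ff\in H^\infty(\rrt)\cap L^\infty(\rrt)$ and vanishes at infinity; in particular $\ff$ takes values in a fixed bounded interval on which the hypothesis $|f^{(n)}(x)|\le M^{n+1}n!$ holds uniformly. The goal is then to show that there exist constants $A,B>0$ and some $p>2$ such that
$$
\|\partial^\alpha \ff\|_{L^p(\rrt)} \;\le\; A B^{|\alpha|}\,|\alpha|!\qquad\text{for every } \alpha\in\N^2.
$$
Once this is established, the Sobolev embedding $W^{1,p}(\rrt)\hookrightarrow L^\infty(\rrt)$ upgrades the bound to $\|\partial^\alpha\ff\|_{L^\infty}\le C B^{|\alpha|}|\alpha|!$, which is the Cauchy-type estimate that forces $\ff$ to be real analytic on all of $\rrt$.

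The main tool is the rewriting $\ff=T[f(\ff)]$, where $T$ is the Fourier multiplier with symbol $|\xi|/(c|\xi|+\xi^2+\eta^2)$. As in the proof of Theorem \ref{regularity}, Lemma \ref{lizolemma} guarantees that the three operators $T$, $\partial_x T$ and $\partial_y T$ are bounded on $L^p(\rrt)$ for every $p\in(1,\infty)$. Hence, for any multi-index $\alpha$ with $|\alpha|\ge 1$, writing $\alpha=\beta+e_i$ with $e_i$ a unit multi-index, we obtain the crucial ``gain of one derivative''
$$
\|\partial^\alpha\ff\|_{L^p} \;=\; \|(\partial^{e_i} T)[\partial^\beta f(\ff)]\|_{L^p} \;\lesssim\; \|\partial^\beta f(\ff)\|_{L^p}.
$$
This recursion is what drives the induction and is where the anisotropic structure of the equation is used.

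The induction on $n=|\alpha|$ then proceeds by expanding $\partial^\beta f(\ff)$ via Fa\`a di Bruno's formula into a sum of terms of the form $f^{(k)}(\ff)\prod_{j=1}^k \partial^{\gamma_j}\ff$ with $\sum_j|\gamma_j|=n-1$ and $1\le k\le n-1$. Using the hypothesis $|f^{(k)}(\ff)|\le M^{k+1}k!$ together with H\"older's inequality (keeping one factor $\partial^{\gamma_{j_0}}\ff$ in $L^p$ and placing the remaining factors in $L^\infty$ via Sobolev embedding applied to the inductive hypothesis) one reduces the estimate to the classical combinatorial bound
$$
\sum_{k=1}^{n-1} k!\!\!\sum_{\substack{\gamma_1+\cdots+\gamma_k=\beta\\ \gamma_j\neq 0}} \frac{\beta!}{\prod_j \gamma_j!}\prod_j|\gamma_j|! \;\le\; C_0\, n!,
$$
which is exactly the identity underlying the fact that the composition of two analytic functions is analytic. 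Choosing $B$ large enough to absorb $M$, the Lizorkin constant, the Sobolev constant and the combinatorial factor $C_0$, and choosing $A$ large enough to cover the finitely many base cases $|\alpha|\le N_0$ (which are controlled directly by the $H^\infty$ regularity of $\ff$), closes the induction.

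The main obstacle is the Fa\`a di Bruno bookkeeping: one must show that summing over partitions of $\beta$ does not destroy the factorial rate $n!$, so that the estimate is stable under iteration. This is a standard generating-function computation once it is paired correctly with the Lizorkin-based $L^p$ estimate and a judicious choice of Sobolev exponent $p>2$; the anisotropic negative-order piece of the operator causes no additional trouble because the multiplier $|\xi|/(c|\xi|+\xi^2+\eta^2)$ and its first-order derivatives are all Lizorkin multipliers, so the whole argument is reduced to the single recursion displayed above.
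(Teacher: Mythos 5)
Your treatment of the linear part is sound and is a legitimate alternative to what the paper does: you gain the derivative by writing $\ff=T[f(\ff)]$ and invoking Lizorkin's lemma for the symbols $\tfrac{|\xi|}{|\xi|+\xi^2+\eta^2}$, $\tfrac{\xi|\xi|}{|\xi|+\xi^2+\eta^2}$, $\tfrac{|\xi|\eta}{|\xi|+\xi^2+\eta^2}$ on $L^p$, whereas the paper applies $\partial^\alpha$ to $\h\ff_x-\Delta\ff+\h f(\ff)_x=0$, pairs with $\partial^\alpha\ff$ in $H^2$, and obtains the energy estimate $\|\nabla\partial^\alpha\ff\|_{H^2}\lesssim\|\partial^\alpha f(\ff)\|_{H^2}$. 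Either recursion gains exactly one derivative, and your closing step (Cauchy estimates imply real analyticity) is equivalent to the paper's explicit Taylor-remainder argument. The anisotropy indeed causes no trouble at this stage.

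The genuine gap is in the combinatorial step, and it is not a bookkeeping formality. First, the displayed inequality is false: in one variable, with $\beta=n-1$, each composition $\gamma_1+\cdots+\gamma_k=\beta$ into nonzero parts contributes $\tfrac{\beta!}{\prod_j\gamma_j!}\prod_j|\gamma_j|!=(n-1)!$, and the term $k=n-1$ alone already gives $(n-1)!\cdot(n-1)!$, which is not $O(n!)$. Second, and more importantly, even with the correctly normalized Fa\`a di Bruno formula the induction with the \emph{plain} factorial hypothesis $\|\partial^\gamma\ff\|\le AB^{|\gamma|}|\gamma|!$ does not close: feeding that hypothesis into the expansion of $\partial^\beta f(\ff)$ produces, after summing over $k$, a geometric factor of the form $(1+CMA)^{|\beta|}$ multiplying $AB^{|\beta|}|\beta|!$. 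The linear recursion only buys you one power of $B$ and one factor of $n$ (from $|\beta|!$ versus $|\alpha|!$), which cannot absorb a geometric growth; this is just the statement that composing with $f$ may shrink the radius of analyticity, so a fixed $B$ cannot survive the iteration naively. The standard repair — and the one the paper uses — is to run the induction with the weighted factorials $\|\partial^\alpha\ff\|_{H^2}\le CA^{(|\alpha|-1)_+}(|\alpha|-2)_+!$ (estimate \eqref{partial-est}) paired with $\|\partial^\alpha f(\ff)\|_{H^2}\le CA^{|\alpha|}(|\alpha|-1)!$ from \cite[Lemma 4.4]{kato-pipolo}: the two-level gap between $(|\alpha|-2)_+!$ and $|\alpha|!$ makes the sums over partitions sub-multiplicative, with constants independent of $|\alpha|$, so the geometric factor disappears and the induction closes. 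You need to replace your target estimate by such a weighted one (or an equivalent device like $\tfrac{B^{|\alpha|}|\alpha|!}{(|\alpha|+1)^2}$) and redo the partition sums accordingly; the $L^p$/Lizorkin framework itself can be kept.
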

\begin{proof}
Fix any $(x_0,y_0)\in\rr^2$. To simplify notation, let $P=(x,y)$ and $P_0=(x_0,y_0)$. By Taylor's formula and the smoothness of $\ff$, one has for any $N\in\mathbb{N}$,
\[
\begin{split}
\ff(P+P_0)&=\sum_{|\alpha|\leq N}\frac{P^\alpha}{\alpha!}\partial^\alpha\ff(P_0)+
\sum_{|\alpha|=N+1}\frac{N+1}{\alpha!}\int_0^1(1-t)^NP^\alpha\partial^\alpha\ff(tP+P_0)\;\dd t\\
&=I+II,
\end{split}
\]
where for any $\alpha=(\alpha_1,\alpha_2)\in\N^2$, $$\partial^{\alpha}=\frac{\partial^{\alpha_1+\alpha_2}}{\partial_x^{\alpha_1}\partial_y^{\alpha_2}},$$
represents the derivative operator of order $|\alpha|=\alpha_1+\alpha_2$.
In order to show the Taylor series is absolutely convergent one needs to estimate the second term. By using the regularity of $\ff$ and the Sobolev embedding $H^2(\rrt)\hookrightarrow L^\infty(\rrt)$, one gets, for any $N>2$,
\[
\begin{split}
|II|&\leq \sum_{|\alpha|=N+1} \frac{N+1}{\alpha!}|P|^{|\alpha}\|\partial^\alpha \ff\|_{H^2}\int_0^1(1-t)^N\; \dd t\\
&=\sum_{|\alpha|=N+1} \frac{1}{\alpha!}|P|^{|\alpha}\|\partial^\alpha \ff\|_{H^2}.
\end{split}
\]
We claim that it suffices to show that there are constants $C>0$ and $A>0$ such that, for any $\alpha\in\mathbb{N}^2$,
\begin{equation}\label{partial-est}
\|\partial^\alpha\ff\|_{H^2}\leq CA^{(|\alpha|-1)_+}(|\alpha|-2)_+!,
\end{equation}
where $(\cdot)_+=\max\{\cdot,0\}$. Indeed, assuming \eqref{partial-est}, we deduce
\[
\begin{split}
|II|&\leq C\sum_{|\alpha|=N+1}\frac{1}{\alpha!}|P|^{|\alpha}A^{(|\alpha|-1)_+}(|\alpha|-2)_+!\\
&=CA^N|P|^{N+1}\sum_{|\alpha|=N+1}\frac{(N-1)!}{\alpha!}
\end{split}
\]
Now by using  the elementary inequality (see \cite[Lemma 4.5]{kato-pipolo})
\[
\sum_{|\alpha|=N}\frac{N!}{\alpha!}\leq 2^{N+1},
\]
one gets
\[
|II|\leq \frac{4C|P|}{N(N+1)}(2A|P|)^N.
\]
Thus by taking  a small enough $R$ such that $2AR<1$, we conclude that $II\to0$, as $N\to\infty$, which shows that the Taylor series is absolutely convergent and $\ff$ is real analyticity in a neighborhood of $P_0$.

Now it remains to prove \eqref{partial-est}. If $|\alpha|\leq2$, in view of the inequality $\|\partial^\alpha\ff\|_{H^2}\leq C \|\ff\|_{H^4}$, the proof is direct. For  $|\alpha|>2$, the proof is by induction on $|\alpha|$. Assume the statement is true for all multi-indices $\alpha\in\N^2$ such that $|\alpha|\leq n$. Then, it suffices to show that \eqref{partial-est} holds with $\partial^\alpha\ff$ replaced by $\partial^\alpha\nabla\ff$. First we recall that equation \eqref{shrira-0} is equivalent to
\begin{equation}\label{shrira-h}
\h\ff_x-\Delta\ff+\h f(\ff)_x=0.
\end{equation}
Then by using the regularity of $\ff$, applying the operator $\partial^\alpha$ and taking the inner product in ${H^2(\rrt)}$ with $\partial^\alpha\ff$ in \eqref{shrira-h}, one derives the identity
\begin{equation}
\langle\h\partial^\alpha\ff_x,\partial^\alpha\ff\rangle_{H^2}-
\langle\Delta\partial^\alpha\ff,\partial^\alpha\ff\rangle_{H^2}
=-\langle\h \partial^\alpha f(\ff)_x,\partial^\alpha\ff\rangle_{H^2}.
\end{equation}
Since $\langle\h\partial^\alpha\ff_x,\partial^\alpha\ff\rangle_{H^2}=\|\hd\partial^\alpha\ff\|^2_{H^2}$ and $\langle\Delta\partial^\alpha\ff,\partial^\alpha\ff\rangle_{H^2}=-\|\nabla\partial^\alpha\ff\|_{H^2}^2$,we have
\[\begin{split}
\|\hd\partial^\alpha\ff\|^2_{H^2}+
\|\nabla\partial^\alpha\ff\|_{H^2}^2
&\leq\|\partial^\alpha\h\ff_x\|_{H^2}
\|\partial^\alpha f(\ff)\|_{H^2}\\
&\leq\frac{1}{2}\|D_x^{1/2}\partial^\alpha\ff_x\|_{H^2}^2
+\frac{1}{2}\|\partial^\alpha f(\ff)\|_{H^2}^2.
\end{split}\]
Hence
\begin{equation}\label{a-est}
\|\nabla\partial^\alpha\ff\|_{H^2}\lesssim
\|\partial^\alpha f(\ff)\|_{H^2}.
\end{equation}
Thus, in order to complete the proof one needs to estimate $\|\partial^\alpha f(\ff)\|_{H^2}$.
But, by recalling the estimate \cite[proof of Lemma 4.4]{kato-pipolo}:
\begin{equation}\label{partial-est1}
\|\partial^\alpha f(\ff)\|_{H^2(\rrt)}\leq
CA^{|\alpha|}(|\alpha|-1)!,
\end{equation}
we immediately conclude the proof.
\end{proof}

\begin{remark}
It is to be observed that \eqref{partial-est1} was obtained in \cite{kato-pipolo} when studying analyticity of solitary waves for the KP equation. However, a simple inspection in the proof reveals that it does not depend on the solution itself, but only its smoothness and our assumptions on the nonlinearity $f$.
\end{remark}

\begin{remark}
A similar result of analyticity was obtained, in \cite{lopez}, when $f(u)=u^2$.  Thus Theorem \ref{analyticity} can also be viewed as an extension of that result.
\end{remark}

\subsection{Decay} This subsection is devoted to the study of decay properties of the solitary waves. Our results are inspired in those in \cite{bl} (see also \cite{maris} and \cite{za}).  The difficulty here, once again comes from the fact that the linear part of \eqref{shrira-0} is nonlocal and non-isotropic.

Recall we are assuming $c=1$ and a priori $f$ satisfies  $|f(u)|\leq C|u|^{p-1}, p\in(2,4)$. However, as we will see in the next result,  a further restriction on $p$ must be imposed.

\begin{lemma}\label{35lemma}
Assume that $p\in(p_0,4)$ where $p_0=(3+\sqrt5)/2$. Then any solitary wave $\ff\in\z$ of \eqref{shrira-0} satisfies
\[
\int_\rrt y^2\left(|\hd\ff|^2+|\nabla\ff|^2\right)d xd y<\infty.
\]
\end{lemma}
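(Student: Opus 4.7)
\textbf{The plan} is to establish the bound by multiplying the equation by $y^2\ff$ (with a truncation in $y$ to justify integrations by parts) and exploiting the structure of the anisotropic operator. Using the identity $\h\partial_x^{-1}\Delta = |D_x| - |D_x|^{-1}\partial_y^2$ and $c=1$, rewrite \eqref{shrira} as
\[
\ff + |D_x|\ff - |D_x|^{-1}\partial_y^2\ff = f(\ff).
\]
Since $|D_x|$ acts only in $x$, it commutes with multiplication by $y^2$; testing against $y^2\ff$ immediately gives $\int y^2\ff|D_x|\ff = \int y^2|\hd\ff|^2$. For the term $-\int y^2\ff|D_x|^{-1}\partial_y^2\ff$, integration by parts in $y$ produces $\int y^2|\nhd\ff_y|^2 + \int 2y\,\nhd\ff\,\nhd\ff_y$, and the last integral equals $\int y\partial_y|\nhd\ff|^2 = -\int|\nhd\ff|^2$. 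I thus arrive at the formal identity
\[
\int_\rrt y^2\big(\ff^2+|\hd\ff|^2+|\nhd\ff_y|^2\big)\,\dd x\dd y=\int_\rrt y^2\ff f(\ff)\,\dd x\dd y+\int_\rrt|\nhd\ff|^2\,\dd x\dd y.
\]

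To make this rigorous I would replace $y^2$ by the bounded cut-off $\chi_R(y)=y^2\eta(y/R)$ with $\eta\in C_c^\infty(\rr)$ equal to $1$ on $[-1,1]$, derive the corresponding identity (with $\frac{1}{2}\int\chi_R''|\nhd\ff|^2$ in place of $\int|\nhd\ff|^2$), and pass to the limit as $R\to\infty$ by dominated convergence, noting $\chi_R''$ is uniformly bounded and converges pointwise to $2$. The nonlinear contribution is then estimated using Remark \ref{vanizero}: because $\ff\in L^\infty(\rrt)$ and $\ff(x,y)\to 0$ as $|(x,y)|\to\infty$, for every $\delta>0$ the set $\{|\ff|>\delta\}$ is contained in some ball $B_{R(\delta)}$, hence
\[
\int_\rrt y^2|\ff|^p\,\dd x\dd y\leq \delta^{p-2}\int_\rrt y^2\ff^2\,\dd x\dd y+R(\delta)^2\|\ff\|_{L^p}^p.
\]
Choosing $\delta$ small absorbs the first summand into the left-hand side of the identity, and the second is finite by Lemma \ref{embed}.

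The crucial intermediate step is the finiteness of $\|\nhd\ff\|_{L^2}$, a quantity not part of the $\z$-norm. Using the Fourier representation $\hat\ff=|\xi|\widehat{f(\ff)}/(|\xi|+\xi^2+\eta^2)$ coming from the equation,
\[
\|\nhd\ff\|_{L^2}^2 = \int_\rrt\frac{|\widehat{f(\ff)}(\xi,\eta)|^2\,|\xi|}{(|\xi|+\xi^2+\eta^2)^2}\,\dd\xi\dd\eta,
\]
which is finite provided $f(\ff)\in L^1(\rrt)\cap L^2(\rrt)$: the $L^2$-bound on $\widehat{f(\ff)}$ handles the region $|\xi|+\xi^2+\eta^2\geq 1$, while in a neighborhood of the origin $|\widehat{f(\ff)}|\leq\|f(\ff)\|_{L^1}$ combined with the direct computation $\int_{|\xi|+\eta^2\leq 1}|\xi|/(|\xi|+\eta^2)^2\,\dd\xi\dd\eta<\infty$ handles the singular region. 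The threshold $p>p_0=(3+\sqrt5)/2$ is exactly what is needed to secure $f(\ff)\in L^1(\rrt)$, equivalently $\ff\in L^{p-1}(\rrt)$: combining the $W^{1,r}$-regularity of Theorem \ref{regularity} (available for every $r\in(1,\infty)$) with a suitable interpolation argument yields this membership above this threshold, whereas for smaller $p$ one would need $\ff$ in an $L^r$-space with $r<2$, out of reach of the embedding machinery of Section \ref{sec2}.

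Finally, to upgrade the weighted bound on $|\nhd\ff_y|^2$ to one on $|\nabla\ff|^2$, I would apply $|D_x|$ to the equation and use $|D_x|^2=-\partial_x^2$ to obtain the elliptic form
\[
-\Delta\ff=\h\big(f(\ff)-\ff\big)_x.
\]
Testing against $y^2\ff$ and integrating by parts gives $\int_\rrt y^2|\nabla\ff|^2-\int_\rrt\ff^2=-\int_\rrt y^2\ff_x\,\h(f(\ff)-\ff)\,\dd x\dd y$. The $L^2$-boundedness of $\h$ (which commutes with multiplication by $y$), Cauchy--Schwarz, and a standard AM--GM absorption yield
\[
\int_\rrt y^2|\nabla\ff|^2\,\dd x\dd y\lesssim \int_\rrt\ff^2\,\dd x\dd y+\int_\rrt y^2\big(f(\ff)-\ff\big)^2\,\dd x\dd y,
\]
and the last integral is dominated by $(1+\|\ff\|_{L^\infty}^{2(p-2)})\int y^2\ff^2$, finite by the preceding step. \textbf{The main obstacle} is the control of $\|\nhd\ff\|_{L^2}$: this norm is outside the $\z$-structure and its finiteness must be produced from the convolution/Fourier representation of the solution itself, which is precisely where the lower bound $p>p_0$ becomes essential.
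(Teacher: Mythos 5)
Your argument is correct in substance but follows a genuinely different route from the paper's. The paper applies $\h$ to \eqref{shrira-0} to get $\h\ff_x-\Delta\ff+\h f(\ff)_x=0$ and tests this against $y^2\chi_n(y)\ff$, so that the weighted quantities $\int y^2\chi_n|\hd\ff|^2$ and $\int y^2\chi_n|\nabla\ff|^2$ appear together in one identity; the entire difficulty is then concentrated in the nonlinear term $\int y^2\chi_n\,\h\ff\, f(\ff)_x$, which is handled with H\"older in $x$, the fractional chain rule and the fractional Gagliardo--Nirenberg inequality, and it is precisely the requirement $2(p-2)\geq 2/(p-1)$ in that estimate that produces the threshold $p_0=(3+\sqrt5)/2$. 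You instead test the zeroth-order form $\ff+|D_x|\ff-|D_x|^{-1}\partial_y^2\ff=f(\ff)$ of \eqref{shrira}, which yields weighted control of $\ff^2$, $|\hd\ff|^2$ and $|\nhd\ff_y|^2$ with a much softer treatment of the nonlinearity (splitting according to $|\ff|\leq\delta$ and using that $\ff$ vanishes at infinity, Remark \ref{vanizero}), at the price of two extra ingredients the paper does not need: the finiteness of $\|\nhd\ff\|_{L^2}$, which you correctly extract from the representation $\widehat{\ff}=|\xi|\widehat{f(\ff)}/(|\xi|+\xi^2+\eta^2)$ once $f(\ff)\in L^1\cap L^2$, and a second energy identity for $-\Delta\ff=\h(f(\ff)-\ff)_x$ to upgrade the weighted bound on $|\nhd\ff_y|^2$ to one on $|\nabla\ff|^2$. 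Both extra steps go through at the same level of rigor as the paper's own computation. One inaccuracy should be flagged: your claim that $p>p_0$ is ``exactly what is needed'' to secure $f(\ff)\in L^1(\rrt)$ is not right --- Theorem \ref{regularity} already gives $\ff\in W^{1,r}(\rrt)$ for every $r\in(1,\infty)$, hence $\ff\in L^{p-1}(\rrt)$ for every $p\in(2,4)$, with no reference to $p_0$. As a consequence your argument, written out, never actually uses the hypothesis $p>p_0$ and would prove the lemma on the full range $p\in(2,4)$. This is not a gap (the stated lemma certainly follows), and it is consistent with the authors' own remark that the restriction is believed to be technical; but you should either check this stronger conclusion carefully or identify where your method truly needs the hypothesis before presenting $p_0$ as the natural threshold for your approach.
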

\begin{proof}
Let $\chi_0\in C_0^\infty(\rr)$ be a function such that $0\leq\chi_0\leq 1$, $\chi(y)=1$ if $|y|\in[0,1]$ and $\chi_0(y)=0$ if $|y|\geq2$. Set $\chi_n(y)=\chi_0(\frac{y^2}{n^2})$, $n\in\mathbb{N}$. Equation \eqref{shrira-0} is equivalent to
\begin{equation}\label{shrira-hh}
\h\ff_x-\Delta\ff+\h f(\ff)_x=0.
\end{equation}
Multiplying \eqref{shrira-hh} by $y^2\chi_n(y)\ff$ and integrating over $\rrt$, we obtain after several integration by parts that
\begin{equation*}
\int_\rrt\h\ff_x\chi_n(y)y^2\ff=
\int_\rrt\chi_n(y)y^2|\hd\ff|^2,
\end{equation*}
\begin{equation*}
-\int_\rrt \ff_{xx}\chi_n(y)y^2\ff=
\int_\rrt\chi_n(y)y^2|\ff_x|^2,
\end{equation*}
and
\begin{equation*}
\begin{split}
-\int_\rrt \ff_{yy}\chi_n(y)y^2\ff=
\int_\rrt\chi_n(y)(y^2\ff_y^2-\ff^2)
-\int_\rrt(2\chi'_n(y)y+\frac{1}{2}\chi''_n(y)y^2)\ff^2.
\end{split}
\end{equation*}
Hence,
\begin{equation}\label{est-0}
\begin{split}
\int_\rrt y^2\chi_n\left(|\hd\ff|^2+|\nabla\ff|^2\right)
&=
\int_\rrt \chi_n\ff^2
+
\int_\rrt(2y\chi_n'+\frac{1}{2}y^2\chi''_n)\ff^2\\
&\quad-
\int_\rrt y^2\chi_n\h\ff f(\ff)_x .
\end{split}\end{equation}
 Let us estimate the last term on the right-hand side of \eqref{est-0}.  By using H\"older's inequality (in the $x$ variable) and the fractional chain rule, we have
\[\begin{split}
\left|\int_{\rr^2} y^2\chi_n\mathscr{H} \ff f(\ff)_x\right|&=
\left|\int_{\rr^2} y^2\chi_n\mathscr{H}\partial_x\ff f(\ff)\right|=\left|\int_{\rr^2} y^2\chi_n D_x^{1/2}\ff D_x^{1/2}f(\ff)\right|\\
&\leq
\int_{\rr}y^2\chi_n\|D_x^{1/2}\ff\|_{L_x^2}\|D_x^{1/2}f(\ff)\|_{L_x^2}dy\\
&\leq
C\int_{\rr}y^2\chi_n\|D_x^{1/2}\ff\|_{L_x^2}\|D_x^{1/2}\ff\|_{L_x^m}\|\ff\|_{L_x^{\ell(p-2)}}^{p-2}dy,
\end{split}
\]
where $m,\ell\neq\infty$ are such that $1/m+1/\ell=1/2$.
Let $\theta=\frac{2(p-2)}{m(p-1)}$, $\lambda=\frac{\ell-2}{\ell(p-1)}$ and take $m$ such that $m>\max\{2,4(p-2)/(p-1)\}$. From the fractional Gagliardo-Nirenberg inequality (see, for instance, \cite{hmoli}), we deduce
\[
\|D_x^{1/2}\ff\|_{L^m_x}\leq C\|\ff_x\|_{L^2_x}^\theta\|\ff\|_{L_x^{2(p-2)}}^{1-\theta},\quad
\|\ff\|_{L_x^{\ell(p-2)}}\leq C\|\ff_x\|_{L_x^2}^\lambda\|\ff\|_{L_x^{2(p-2)}}^{1-\lambda}.
\]
Note that if $p\in[p_0,4)$ then, from Theorem \ref{regularity}, $\ff(\cdot,y)\in L^{2(p-2)}_x$ a.e. $y\in\rr$. In particular note that $2(p-2)\geq 2/(p-1)$ only if $p\geq p_0$ (here is where the restriction on $p$ appears).
Therefore,
\[\begin{split}
\left|\int_{\rr^2} y^2\chi_n\mathscr{H} \ff f(\ff)_x\right|
\leq C\int_{\rr}y^2\chi_n\|D_x^{1/2}\ff\|_{L_x^2}\|\ff_x\|_{L_x^2}\|\ff\|_{L_x^{2(p-2)}}^{p-2}dy.
\end{split}
\]

Let $\epsilon\in(0,1)$ be such that $C\epsilon<1/2$. Since $\ff$ is continuous and tends to zero at infinity, we choose $R>0$ such that $\|\ff(\cdot,y)\|_{L_x^{2(p-2)}}<\epsilon$ for any $|y|>R$. Then, there exists a constant $C_R>0$ such that
\begin{equation}\label{est-00}
\begin{split}
\left|\int_{\rr^2} y^2\chi_n\mathscr{H} \ff f(\ff)_x\right|
&\leq C_R+C\epsilon\int_{|y|>R}y^2\chi_n\|D^{1/2}\ff\|_{L_x^2}\|u_x\|_{L_x^2}dy\\
&\leq C_R+C\epsilon\|y\chi_n^{1/2}D_x^{1/2}\ff\|_{L^2}\|y\chi_n^{1/2}\ff_x\|_{L^2}\\
&\leq C_R+C\epsilon(\|y\chi_n^{1/2}D_x^{1/2}\ff\|_{L^2}^2+\|y\chi_n^{1/2}\ff_x\|_{L^2}^2)\\
&\leq C_R+\frac{1}{2}\left(\|y\chi_n^{1/2}D_x^{1/2}\ff\|_{L^2}^2+\|y\chi_n^{1/2}\ff_x\|_{L^2}^2\right).
\end{split}
\end{equation}
By replacing \eqref{est-00} into \eqref{est-0} we obtain
\begin{equation}\label{est-01}
\int_\rrt y^2\chi_n\left(|\hd\ff|^2+|\nabla\ff|^2\right)
\lesssim
\int_\rrt \chi_n\ff^2
+
\int_\rrt(2y\chi_n'+\frac{1}{2}y^2\chi''_n)\ff^2+C_R.
\end{equation}
The first term of the right-hand side of \eqref{est-01} tends, as $n\to\infty$, to $\|\ff\|_{L^2}^2$ by the Lebesgue theorem. The second term tends to zero by the Lebesgue theorem and the properties of $\chi_n$.
 Therefore
\[
\int_\rrt y^2\chi_n\left(|\hd\ff|^2+|\nabla\ff|^2\right)\dd x\dd y
\]
is uniformly bounded in $n$. By the Fatou lemma, we get our claim.
\end{proof}

In view of Lemma \ref{35lemma} in what follows, otherwise is stated, we assume $p\in(p_0,4)$. However, we believe the restriction on $p$ in Lemma \ref{35lemma} is a technical requirement and once the lemma has been proved for $p\in(2,p_0]$, the results below will also follow.

\begin{lemma}\label{d-kernel}
Let $\ell\geq0$, $\nu>-3/2$ and define $h_\nu$ via its Fourier transform by
\[
\widehat{h_\nu}(\xi,\eta)=\frac{|\xi|^{1+\nu}}{|\xi|+\xi^2+\eta^2}.
\]
Then,
\begin{enumerate}[(i)]
\item $h_\nu\in C^\infty(\rr^2\setminus\{0\})$.
\item $|y|^\ell h_\nu\in L^q(\rrt)$, if $1\leq q<\infty$ and $\frac{\ell}{3}+\frac{1}{q}<\frac{2}{3}\nu+1$ and $\ell+\frac{2}{q}>\nu+1$.
\item $|y|^{2\nu+3} h_\nu\in L^\infty(\rrt)$.
\end{enumerate}
\end{lemma}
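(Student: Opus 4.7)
My starting point is to reduce the definition of $h_\nu$ to a one-dimensional integral by carrying out the $\eta$-integration via the residue identity $\int_{\mathbb{R}}\frac{e^{iy\eta}}{a^2+\eta^2}\,d\eta=\frac{\pi}{a}e^{-a|y|}$ with $a=\sqrt{|\xi|(1+|\xi|)}$. This yields, for $y\ne 0$,
\[
h_\nu(x,y) = \frac{1}{4\pi}\int_{\mathbb{R}} e^{ix\xi}\,\frac{|\xi|^{\nu+1/2}}{\sqrt{1+|\xi|}}\,e^{-|\xi|^{1/2}\sqrt{1+|\xi|}\,|y|}\,d\xi,
\]
an absolutely convergent representation for every $(x,y)$ with $y\ne 0$ and $\nu>-3/2$. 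All three parts will be derived from this formula.

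For (i), the exponential factor decays super-polynomially in $\xi$ as soon as $y\ne 0$, so the integrand is Schwartz in $\xi$ and one may differentiate under the integral any number of times in both $x$ and $y$; hence $h_\nu\in C^\infty(\{y\ne 0\})$. For the slice $\{y=0,\ x\ne 0\}$ I would use the standard decomposition $\widehat{h}_\nu = \chi\widehat{h}_\nu + (1-\chi)\widehat{h}_\nu$ with $\chi\in C_c^\infty(\mathbb{R}^2)$ a cutoff at the origin. The low-frequency piece has compactly supported Fourier transform, so its inverse is smooth by Paley--Wiener. The high-frequency piece has a smooth symbol obeying $|\partial^\alpha[(1-\chi)\widehat{h}_\nu]|\lesssim (|\xi|+|\eta|)^{\nu-1-|\alpha|}$, and multiplying by $(|x|^2+|y|^2)^N$ on the physical side (equivalently, integrating by parts $2N$ times on the Fourier side) produces bounded continuous derivatives of any prescribed order off the origin once $N$ is large.

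Part (iii) follows by the anisotropic substitution $\xi=t/|y|^2$ in the representation, which gives
\[
|y|^{2\nu+3}\,h_\nu(x,y) = \frac{1}{4\pi}\int_{\mathbb{R}} e^{ixt/|y|^2}\,\frac{|t|^{\nu+1/2}}{\sqrt{1+|t|/|y|^2}}\,e^{-|t|^{1/2}\sqrt{1+|t|/|y|^2}}\,dt.
\]
Since $\sqrt{1+|t|/|y|^2}\ge 1$, the absolute value of the right-hand side is bounded uniformly by $\frac{1}{4\pi}\int_{\mathbb{R}}|t|^{\nu+1/2}\,e^{-|t|^{1/2}}\,dt$, which is finite for $\nu>-3/2$.

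For (ii), my plan is to couple the $L^\infty$ bound from (iii) with a bound that gains decay in $|x|$ through integration by parts in $\xi$. Writing
\[
(ix)^N\,h_\nu(x,y) = \frac{(-1)^N}{4\pi}\int_{\mathbb{R}} e^{ix\xi}\,\partial_\xi^N\!\Bigl[\frac{|\xi|^{\nu+1/2}}{\sqrt{1+|\xi|}}\,e^{-|\xi|^{1/2}\sqrt{1+|\xi|}|y|}\Bigr]\,d\xi,
\]
and rescaling $\xi=t/|y|^2$ on the piece $|\xi|\le 1$ (low-frequency, anisotropic scaling) and $\xi=t/|y|$ on the piece $|\xi|>1$ (high-frequency, isotropic scaling), one obtains a two-scale pointwise bound of the schematic form $|h_\nu(x,y)|\le C\min\bigl(\Phi_0(y),\,|x|^{-N}\Phi_N(y)\bigr)$, where $\Phi_0(y)$ is $|y|^{-(2\nu+3)}$ for $|y|$ large and $|y|^{-(\nu+1)}$ for $|y|$ small. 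Splitting the $x$-integral at the optimal threshold and integrating in $x$ first reduces the $L^q$ estimate for $|y|^\ell h_\nu$ to two one-dimensional integrals in $|y|$: integrability at $|y|\to\infty$ is governed by the anisotropic scaling $(x,y)\mapsto(\lambda^2 x,\lambda y)$ and produces $\ell/3+1/q<2\nu/3+1$, while integrability at $|y|\to 0$ is governed by the isotropic scaling and produces $\ell+2/q>\nu+1$. The main obstacle is precisely this bookkeeping in part (ii): the single representation carries two distinct scaling regimes, and capturing the sharp endpoints requires tracking both scales consistently through the integration by parts.
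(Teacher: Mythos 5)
Your one–dimensional representation obtained by performing the $\eta$-integral via residues is correct, and it gives clean proofs of (i) on $\{y\neq 0\}$ and of (iii) (your anisotropic substitution $\xi=t/|y|^2$ for (iii) is in fact tidier than what the paper does). The scaling heuristics you identify for (ii) are also the right ones: the condition $\frac{\ell}{3}+\frac1q<\frac23\nu+1$ does come from the anisotropic regime $|x|\sim|y|^2\to\infty$ and $\ell+\frac2q>\nu+1$ from the isotropic regime near the origin. However, part (ii) — the substantive part of the lemma — is not proved: you describe a ``schematic'' two-scale pointwise bound and yourself flag the bookkeeping as ``the main obstacle,'' but the mechanism you propose for producing decay in $x$, namely writing $(ix)^N h_\nu=(-1)^N\int e^{ix\xi}\partial_\xi^N[\cdots]\,d\xi$, breaks down: the amplitude $|\xi|^{\nu+1/2}(1+|\xi|)^{-1/2}e^{-a(\xi)|y|}$ is not smooth at $\xi=0$, and $\partial_\xi^N|\xi|^{\nu+1/2}\sim|\xi|^{\nu+1/2-N}$ is non-integrable near $\xi=0$ as soon as $N\geq\nu+3/2$ (which is needed already for $N=1$ when $\nu\leq-1/2$, a range the lemma must cover). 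So the integration by parts is not justified as stated; one would need a dyadic decomposition in $\xi$ or some other device, and none is supplied. A related flaw appears in your treatment of (i) on the slice $\{y=0,\ x\neq0\}$: the claimed symbol estimate $|\partial^\alpha[(1-\chi)\widehat{h}_\nu]|\lesssim(|\xi|+|\eta|)^{\nu-1-|\alpha|}$ is false near the line $\xi=0$ with $|\eta|$ large (e.g.\ $\partial_\xi^2|\xi|^{1+\nu}\sim|\xi|^{\nu-1}$ blows up there independently of $|\eta|$), so the standard integrate-by-parts-in-frequency argument does not apply as written.

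For comparison, the paper avoids all of this by pushing the subordination one step further: writing $(|\xi|+\xi^2+\eta^2)^{-1}=|\xi|^{-1}\int_0^\infty e^{-t(1+|\xi|+\eta^2/|\xi|)}\,dt$ and evaluating both the $\eta$- and $\xi$-integrals in closed form (a Gaussian and a known transform of $|\xi|^{\nu+1/2}e^{-a|\xi|}$), which yields the explicit kernel
\begin{equation*}
h_\nu(x,y)=C_\nu\int_0^{\infty}t^{\nu+1}\ee^{-t}\bigl(t^2x^2+(t^2+y^2)^2\bigr)^{-\frac{2\nu+3}{4}}\cos\bigl(\cdots\bigr)\,\dd t .
\end{equation*}
From this, $\||y|^\ell h_\nu\|_{L^q}$ is estimated by Minkowski's inequality followed by two exact scaling computations in $x$ and $y$, and the three hypotheses of (ii) appear transparently as the convergence conditions of three elementary one-dimensional integrals (in $x$, in $y$, and in $t$ near $0$). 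If you want to keep your representation, the most economical repair of (ii) is to reintroduce the $t$-integral there too (i.e.\ write $e^{-a(\xi)|y|}/\sqrt{|\xi|(1+|\xi|)}$ back as a Laplace-type integral), or else to carry out the $x$-decay estimate by a genuine dyadic decomposition of the $\xi$-integral; as the proposal stands, (ii) is a plan rather than a proof.
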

\begin{proof}
We observe for any that, $\phi\in \mathcal{S}(\rrt)$ (the Schwartz space)
\[
\begin{split}
\langle h_{\nu},\phi\rangle_{\mathcal{S},\mathcal{S}'}&=\int_{\rrt}\frac{|\xi|^{1+\nu}}{|\xi|+\xi^2+\eta^2}
\int_\rrt\ee^{\ii(x\xi+y\eta)}\phi(x,y)\;\dd x\dd y\;\dd\xi\dd\eta\\
&=\int_0^{+\infty}\int_{\rrt}\int_{\rrt}|\xi|^\nu\ee^{\ii(x\xi+y\eta)}\ee^{-t(1+|\xi|+ \eta^2/|\xi|)}\phi(x,y)\;\dd\xi\dd\eta\;
\dd x\dd y\;\dd t\\
&=\int_0^{+\infty}\frac{\ee^{-t}}{\sqrt{t}}\int_{\rrt}\int_{\rr}|\xi|^{\nu+\frac{1}{2}}\ee^{\ii x\xi}\ee^{-|\xi|(t+y^2/t)}\phi(x,y)\;\dd\xi\;
\dd x\dd y\;\dd t\\
&=2\Gamma(\nu+\frac{3}{2})\int_0^{+\infty}t^{\nu+1}\ee^{-t}\int_{\rrt}
\left(t^2x^2+\left(t^2+y^2\right)^2\right)^{-\frac{2\nu+3}{4}}\\
&\qquad\times\cos\left((\nu+\frac{3}{2})\arctan\left(\frac{t|x|}{t^2+y^2}\right)\right)\phi(x,y)\;\dd x\dd y\;\dd t,
\end{split}
\]
where in the last equality we used that $\xi\mapsto |\xi|^{\nu+\frac{1}{2}}\ee^{-|\xi|(t+y^2/t)}$ is an even function and formula (7) in \cite[page 15]{erd}.
Thus, we deduce that
\begin{equation}\label{kernel}
\begin{split}
h_\nu(x,y)=
2\Gamma(\nu+\frac{3}{2})\int_0^{+\infty}&t^{\nu+1}\ee^{-t}\left(t^2x^2+\left(t^2+y^2\right)^2\right)^{-\frac{2\nu+3}{4}}\\
&\times
\cos\left((\nu+\frac{3}{2})\arctan\left(\frac{t|x|}{t^2+y^2}\right)\right)\;\dd t.
\end{split}
\end{equation}
From the above expression, parts (i) and (iii) are obvious. Let us establish (ii). Indeed,
\begin{equation}\label{calch}
\||y|^\ell h_\nu\|_{L^q}\lesssim \int_0^\infty t^{\nu+1}\ee^{-t}\underbrace{\||y|^{\ell}\left(t^2x^2+\left(t^2+y^2\right)^2\right)^{-\frac{2\nu+3}{4}}\|_{L^q}}_{A}\; \dd t.
\end{equation}
But,
\begin{equation}\label{innerint}
\begin{split}
A^q&=\int_{\rr}|y|^{q\ell}\left[\int_{\rr}\left(t^2x^2+\left(t^2+y^2\right)^2\right)^{-\frac{2\nu+3}{4}q}\dd x\right]\dd y\\
&= \int_{\rr}|y|^{q\ell}\left[ t^{-\frac{2\nu+3}{2}q}\int_{\rr}\left(x^2+\left(t+\frac{y^2}{t}\right)^2\right)^{-\frac{2\nu+3}{4}q}\dd x\right]\dd y\\
&= \int_{\rr}|y|^{q\ell}\left[ t^{-\frac{2\nu+3}{2}q} \left(t+\frac{y^2}{t}\right)^{1-\frac{2\nu+3}{2}q}
\int_{\rr}\left(z^2+1\right)^{-\frac{2\nu+3}{4}q}\dd z\right]\dd y,
\end{split}
\end{equation}
where we used a change of variable. Since $\nu\geq-3/2$ we have
$$
\frac{1}{q}<\frac{2}{3}\nu+1-\frac{\ell}{3}\leq \frac{2}{3}\nu+1<\nu+\frac{3}{2}
$$
and the inner integral in \eqref{innerint} is finite. Thus,
\begin{equation}\label{innerint1}
\begin{split}	
A^q&=C t^{-\frac{2\nu+3}{2}q} \int_{\rr}|y|^{q\ell} \left(t+\frac{y^2}{t}\right)^{1-\frac{2\nu+3}{2}q}
	\dd y\\
&=Ct^{-1} \int_{\rr}|y|^{q\ell} \left(t^2+y^2\right)^{1-\frac{2\nu+3}{2}q}\dd y\\
&=Ct^{q\ell+2-(2\nu+3)q}\int_{\rr}|z|^{q\ell} \left(1+z^2\right)^{1-\frac{2\nu+3}{2}q}\dd y.
\end{split}
\end{equation}
Since $\frac{\ell}{3}+\frac{1}{q}<\frac{2}{3}\nu+1$ this last integral is also finite. Consequently,
$$
\||y|^\ell h_\nu\|_{L^q}\lesssim \int_0^\infty t^{\nu+1}\ee^{-t}t^{\ell+\frac{2}{q}-(2\nu+3)}\dd t.
$$
The assumption $\ell+\frac{2}{q}>\nu+1$ now implies that this last integral is finite and the proof of the lemma is completed.
\end{proof}

\begin{lemma}\label{decay-1}
	Assume $f\in C^1$ and  $p\geq p_0=(3+\sqrt{5})/2$. Let  $\ff\in\z$ be any solitary wave  of \eqref{shrira-0}. Then $|y|\ff\in L^q(\rrt)$, for all $3/2< q\leq\infty.$
\end{lemma}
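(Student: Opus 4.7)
The plan is to exploit the convolution representation of $\ff$ derived from equation \eqref{shrira} with $c=1$, and then combine the kernel moment estimates of Lemma \ref{d-kernel} with the weighted gradient bound of Lemma \ref{35lemma}. Writing \eqref{shrira} as $\ff + \mathscr{H}\partial_x^{-1}\Delta\ff = f(\ff)$ and computing the Fourier symbol of the operator on the left to be $(|\xi|+\xi^2+\eta^2)/|\xi|$, I would invert to obtain
\[
\widehat\ff(\xi,\eta) = \frac{|\xi|}{|\xi|+\xi^2+\eta^2}\,\widehat{f(\ff)}(\xi,\eta),
\]
so $\ff = h_0 * f(\ff)$ with $h_0$ the kernel of Lemma \ref{d-kernel} corresponding to $\nu=0$. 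Multiplying by $y$ and splitting via $y = (y-y')+y'$ inside the convolution integral produces the decomposition
\[
y\ff(x,y) = \bigl[(yh_0) * f(\ff)\bigr](x,y) + \bigl[h_0 * (y'f(\ff))\bigr](x,y).
\]

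For the first term, Lemma \ref{d-kernel}(ii) with $\ell = 1$ and $\nu = 0$ gives $yh_0 \in L^r(\rrt)$ for every $r \in (3/2, \infty)$, while Theorem \ref{regularity} and Remark \ref{vanizero} yield $\ff \in L^\infty \cap L^s(\rrt)$ for all $s \in [2,\infty]$; hence the growth hypothesis $|f(u)|\leq C|u|^{p-1}$ places $f(\ff)$ in every $L^s$ with $s \in [2/(p-1), \infty]$. Young's inequality then controls $(yh_0)*f(\ff)$ in $L^q(\rrt)$ over the targeted range $(3/2,\infty]$, the endpoint $q=\infty$ being reached by choosing dual exponents strictly inside the admissible intervals for $yh_0$ and $f(\ff)$.

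The second term is the heart of the matter. A naive estimate factoring $|yf(\ff)| \leq C|y|\,|\ff|\,|\ff|^{p-2}$ and pulling out $\|\ff\|_{L^\infty}^{p-2}$ merely reduces the problem to bounding $\||y|\ff\|_{L^s}$, which is circular. To close the loop I would apply the fractional Gagliardo-Nirenberg inequality in the $x$-variable (exactly as in the proof of Lemma \ref{embed}), evaluated on $\ff(\cdot, y)$ and weighted by $|y|$, and then integrate in $y$ using Lemma \ref{35lemma}, which supplies $\|y\hd\ff\|_{L^2}, \|y\ff_x\|_{L^2}, \|y\ff_y\|_{L^2} <\infty$. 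The hypothesis $p \geq p_0 = (3+\sqrt{5})/2$ enters here precisely as the condition $2(p-2) \geq 2/(p-1)$, equivalent to $p^2-3p+1 \geq 0$; this is the same restriction that appeared in Lemma \ref{35lemma} and it is exactly what keeps the Gagliardo-Nirenberg exponents admissible. Once $yf(\ff)$ is shown to lie in sufficiently many $L^s$, Young's inequality against $h_0$ --- which Lemma \ref{d-kernel}(ii) with $\ell=0,\,\nu=0$ places in $L^r$ for $r \in (1,2)$ --- yields the required $L^q$ bounds on the second term.

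Combining the two estimates produces $\||y|\ff\|_{L^q(\rrt)} < \infty$ for all $q \in (3/2, \infty]$. The main obstacle is the second convolution: the natural factorization of $yf(\ff)$ is circular, so the detour through Lemma \ref{35lemma} and the fractional Gagliardo-Nirenberg inequality in $x$ is essential, and it is precisely at that step that the lower bound $p \geq p_0$ plays its role.
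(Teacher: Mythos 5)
Your decomposition $y\ff=(yh_0)\ast f(\ff)+h_0\ast\bigl(y f(\ff)\bigr)$ and the treatment of the first term are fine, and you have correctly located the difficulty in the second term. But the device you propose to break the circularity --- a $|y|$-weighted Gagliardo--Nirenberg inequality in the $x$-variable integrated in $y$ against Lemma \ref{35lemma} --- does not close, and this is a genuine gap. Any one-dimensional interpolation of the form $\|\ff(\cdot,y)\|_{L^s_x}\le C\|\ff(\cdot,y)\|_{L^2_x}^{1-\theta}\|\hd\ff(\cdot,y)\|_{L^2_x}^{\theta}$ carries the factor $\|\ff(\cdot,y)\|_{L^2_x}^{1-\theta}$ with $1-\theta>0$ (the endpoint $\theta=1$, $s=\infty$ fails since $H^{1/2}(\rr)\not\hookrightarrow L^\infty(\rr)$), so after multiplying by $|y|$ a positive power of the weight must land either on $\|\ff(\cdot,y)\|_{L^2_x}$ or, as $|y|^{1/\theta}$ with $1/\theta>1$, on $\|\hd\ff(\cdot,y)\|_{L^2_x}$. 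Lemma \ref{35lemma} supplies neither: it gives only $y\hd\ff,\,y\nabla\ff\in L^2(\rrt)$, no weighted bound on $\ff$ itself, and no weighted bound on $\nhd\ff_y$ (which is what a weighted version of the sup-in-$y$ estimate \eqref{enq-1} would require). So the step ``once $yf(\ff)$ is shown to lie in sufficiently many $L^s$'' is exactly the point that remains unproved.

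The paper breaks the circle differently. It first proves the endpoint bound $y\ff\in L^\infty$ by shifting part of the $x$-derivative onto the nonlinearity, $\ff=h_{\beta-1}\ast D_x^{1-\beta}f(\ff)$ with $\beta\in(0,3/4)$; in the resulting bound $|y\ff|\lesssim|yh_{\beta-1}|\ast|D_x^{1-\beta}f(\ff)|+|h_{\beta-1}|\ast|yD_x^{1-\beta}f(\ff)|$ the fractional chain rule places the weight on $\|yD_x^{1-\beta}\ff\|_{L^2}$ rather than on $\ff$. That quantity is in turn finite by a second convolution identity $D_x^{1-\beta}\ff=h_{\tilde\beta}\ast\hd f(\ff)$, $\tilde\beta=1/2-\beta$, which reduces everything to $\|y\hd\ff\|_{L^2}<\infty$ --- precisely what Lemma \ref{35lemma} (and hence the hypothesis $p\ge p_0$) provides. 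Only after $y\ff\in L^\infty$ is known does the paper run your decomposition with $h_0$, since then $\|yf(\ff)\|_{L^{q_2}}\le\|y\ff\|_{L^\infty}\|\ff\|_{L^{q_2(p-2)}}^{p-2}$ is no longer circular. To repair your argument you need this intermediate weighted estimate on a fractional $x$-derivative of $\ff$, or some equivalent mechanism that transfers the weight from $\ff$ onto $\hd\ff$; the weighted Gagliardo--Nirenberg route alone will not produce it.
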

\begin{proof}
First we show $y\ff\in L^\infty(\rr^2)$. Choose $\beta\in(0,3/4)$ and $q_1,q_2>2$ satisfying
\begin{equation}\label{roleq}
	\frac{1}{2}=\frac{1}{q_1}+\frac{1}{q_2},\qquad
	\frac{3}{2q_1}<\beta<\frac{2}{q_1}\quad\mbox{and}\quad (p-2)q_2>1,
\end{equation}
	where $h_{\beta-1}$ is as in Lemma \ref{d-kernel}.
	Then it follows from
	\[
	|y\ff|\lesssim|yh_{\beta-1}|\ast|D^{1-\beta}_xf(\ff)|+|h_{\beta-1}|\ast|yD^{1-\beta}_xf(\ff)|,
	\]
	 Young's inequality and the fractional chain rule that
	\[\begin{split}
	\|y\ff\|_{L^\infty}&\lesssim \|yh_{\beta-1}\|_{L^{q_1}}\|D^{1-\beta}_x\ff\|_{L^2}\|\ff^{p-2}\|_{L^{q_2}}\\
	&\quad+
	\|h_{\beta-1}\|_{L^{q_1}}\|yD^{1-\beta}_x\ff\|_{L^2}\|\ff^{p-2}\|_{L^{q_2}}.
	\end{split}\]
	From \eqref{roleq}, Lemma \ref{d-kernel}, Theorem \ref{regularity}, and the fact that  $\ff\in\z$, the right-hand side of the above inequality is finite if $\|yD^{1-\beta}_x\ff\|_{L^2}<+\infty$.
	
	We state that  $\|yD^{1-\beta}_x\ff\|_{L^2}<+\infty$. Indeed,  if $\ff$ satisfies \eqref{shrira-0}, then
	\[
	D^{1-\beta}_x\ff=h_{\tilde\beta}\ast \hd f(\ff),
	\]
	where $\tilde\beta=1/2-\beta$.  Now we choose $\widetilde{q}_1,\widetilde{q}_2,r_1$ and $r_2$ such that
	 \begin{equation}\label{roleq1}
	  1=\frac{1}{\widetilde{q}_1}+\frac{1}{\widetilde{q}_2},\quad \frac{2}{\widetilde{q}_1}>\tilde\beta>\frac{3}{2\widetilde{q}_1}-1, \quad \widetilde{q}_1>1, \quad  (p-2)\widetilde{q}_2>1
	 \end{equation}
	 and
	 \begin{equation}\label{roleq2}
	  1=\frac{1}{r_1}+\frac{1}{r_2}, \quad \frac{1}{2}(\frac{3}{r_1}-1)<1+\tilde\beta<\frac{2}{r_1}, \quad  r_1>1, \quad (p-2)r_2>1.
	 \end{equation}
Since
	\[
|yD^{1-\beta}\ff|\lesssim|yh_{\tilde\beta}|\ast|\hd f(\ff)|+|h_{\tilde\beta}|\ast|y\hd f(\ff)|,
\] we have from the fractional chain rule and Young's inequality
	\[
	\begin{split}
	\|yD^{1-\beta}\ff\|_{L^2}&\lesssim \|yh_{\tilde\beta}\|_{L^{q_1}}\|\hd\ff\|_{L^2}\|\ff^{p-2}\|_{L^{q_2}}\\
	&\quad+
	\|h_{\tilde\beta}\|_{L^{r_1}}\|y\hd\ff\|_{L^2}\|\ff^{p-2}\|_{L^{r_2}}<+\infty.
	\end{split}\]
Note that since $p\geq p_0$, Lemma \ref{35lemma} implies that $\|y\hd\ff\|_{L^2}<\infty$. Hence,  \eqref{roleq1}, \eqref{roleq2}, Lemma \ref{d-kernel}, Theorem \ref{regularity}, and the fact that  $\ff\in\z$, implies that the right-hand side of the above inequality is finite.

Next we prove that $y\ff\in L^q(\rr^2)$, $q>3/2$. Because
\[
|y\ff|\lesssim|yh_0|\ast|f(\ff)|+|h_0|\ast|y f(\ff)|,
\]
by choosing $q_1\in(1,2)$, $r_1>3/2$ and $r_2(p-1),q_2(p-2)>1$ satisfying $1+\frac{1}{q}=\frac{1}{q_1}+\frac{1}{q_2}=\frac{1}{r_1}+\frac{1}{r_2}$, we get from $y\ff\in L^\infty$ that
\[\begin{split}
\|y\ff\|_{L^q}&\lesssim \|yh_0\|_{L^{r_1}}\|f(\ff)\|_{L^{r_2}}
+
\|h_0\|_{L^{q_1}}\|yf(\ff)\|_{L^{q_2}}\\
&\lesssim
\|\ff\|_{L^{r_2(p-1)}}^{p-1}+\|y\ff\|_{L^\infty}\|\ff\|_{L^{q_2(p-2)}}^{p-2}<+\infty,
\end{split}\]
where we used Lemma \ref{d-kernel} and Theorem \ref{regularity} again.
	This completes the proof.
\end{proof}

In view of Lemma \ref{decay-1}, otherwise stated, we assume that $f\in C^1$.
As an immediate consequence we deduce.

\begin{corollary}
Let $\ff\in\z$ be any solitary wave  of \eqref{shrira-0} and $\theta\in[0,1]$. Then $|y|^\theta\ff\in L^q(\rrt)$, for all $3/2< q\leq\infty$.
\end{corollary}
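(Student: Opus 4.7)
The plan is to interpolate between the two endpoint values $\theta=0$ and $\theta=1$. The case $\theta=1$ is precisely Lemma \ref{decay-1}. For $\theta=0$ the claim reduces to $\ff\in L^q(\rrt)$ for $q\in(3/2,\infty]$; by Theorem \ref{regularity} one has $\ff\in W^{1,r}(\rrt)$ for every $r\in(1,\infty)$, and together with Remark \ref{vanizero} (which gives $\ff\in L^\infty(\rrt)$) this settles the endpoints.

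For $\theta\in(0,1)$ the key observation is the pointwise factorization
\begin{equation*}
|y|^\theta|\ff(x,y)|=\bigl(|y|\,|\ff(x,y)|\bigr)^\theta\cdot|\ff(x,y)|^{1-\theta}.
\end{equation*}
When $3/2<q<\infty$, I would apply H\"older's inequality with conjugate exponents $1/\theta$ and $1/(1-\theta)$ to the functions $(|y||\ff|)^{\theta q}$ and $|\ff|^{(1-\theta)q}$ in order to obtain
\begin{equation*}
\||y|^\theta\ff\|_{L^q}\leq \||y|\ff\|_{L^q}^{\theta}\,\|\ff\|_{L^q}^{1-\theta}.
\end{equation*}
Both factors on the right are finite by Lemma \ref{decay-1} and by Theorem \ref{regularity}, respectively, so the left-hand side is finite.

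When $q=\infty$, the same factorization gives the pointwise bound
\begin{equation*}
|y|^\theta|\ff(x,y)|\leq \||y|\ff\|_{L^\infty}^{\theta}\,\|\ff\|_{L^\infty}^{1-\theta},
\end{equation*}
again finite by Lemma \ref{decay-1} and Remark \ref{vanizero}. There is no real obstacle to this argument; the substantive work has already been carried out in Lemma \ref{decay-1}, and the corollary is a direct interpolation consequence.
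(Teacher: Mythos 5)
Your proof is correct and is essentially the paper's argument: both reduce the claim to the two endpoint facts $\ff\in L^q$ (Theorem \ref{regularity} and Remark \ref{vanizero}) and $|y|\ff\in L^q$ (Lemma \ref{decay-1}). The only cosmetic difference is that the paper bounds $|y|^{\theta q}$ by $1$ on $\{|y|\leq1\}$ and by $|y|^{q}$ on $\{|y|\geq1\}$, yielding a sum of the two endpoint norms, whereas you obtain the multiplicative bound $\||y|\ff\|_{L^q}^{\theta}\|\ff\|_{L^q}^{1-\theta}$ via H\"older; both are valid one-line reductions.
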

\begin{proof}
It suffices to note that
\[
\begin{split}
\int_{\rrt}|y|^{\theta q}|\ff|^q\dd x\dd y&=\int_{\rr}\left(\int_{|y|\leq1}|y|^{\theta q}|\ff|^q\dd y \right)\dd x+\int_{\rr}\left(\int_{|y|\geq1}|y|^{\theta q}|\ff|^q\dd y \right)\dd x\\
&\leq \int_{\rrt}|\ff|^q\dd x\dd y+\int_{\rrt}|y|^{q}|\ff|^q\dd x\dd y
\end{split}
\]
and apply Theorem \ref{regularity} and Lemma \ref{decay-1}.
\end{proof}

Next, we observe that equation \eqref{shrira-0} may be written in the equivalent form
\begin{equation}\label{conv-form}
\ff=k\ast f(\ff),
\end{equation}
where $k=h_0=\left(\frac{|\xi|}{|\xi|+\xi^2+\eta^2}\right)^\vee$ was defined in Lemma \ref{d-kernel}. We will use the properties of the kernel $k$ to get some decay estimates for the solution $\ff$ of \eqref{shrira-0}. As an immediate consequence of Lemma \ref{d-kernel} we have the following.

\begin{lemma}\label{kernel-properties}
Let $\ell\in[0,3)$. Assume that  $1\leq q<\infty$ satisfies $\frac{1}{q}+\frac{1}{3}\ell<1$ and $1<\ell+\frac{2}{q}$. Then,
\begin{enumerate}[(i)]
\item $k\in C^\infty(\rr^2\setminus\{0\})$;
\item $|y|^\ell k\in L^q(\rrt)$;
\item $|y|^{3} k\in L^\infty(\rrt)$.
\end{enumerate}
\end{lemma}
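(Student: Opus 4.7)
The plan is to observe that the kernel $k$ is precisely $h_0$ in the family defined in Lemma \ref{d-kernel}, i.e., $k=h_\nu$ with $\nu=0$, since
$$
\widehat{k}(\xi,\eta)=\frac{|\xi|}{|\xi|+\xi^2+\eta^2}=\frac{|\xi|^{1+0}}{|\xi|+\xi^2+\eta^2}=\widehat{h_0}(\xi,\eta).
$$
The statement is then nothing more than the specialization of Lemma \ref{d-kernel} to this value of $\nu$, so the proof should be a direct substitution checking that the three conditions of Lemma \ref{d-kernel} reduce to the hypotheses in the current statement.

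First, I would note that the hypothesis $\nu>-3/2$ in Lemma \ref{d-kernel} is trivially satisfied by $\nu=0$. Part (i) of Lemma \ref{kernel-properties} is then immediate from Lemma \ref{d-kernel}(i). For part (ii), the integrability conditions in Lemma \ref{d-kernel}(ii) become $\frac{\ell}{3}+\frac{1}{q}<\frac{2}{3}\cdot 0+1=1$ and $\ell+\frac{2}{q}>0+1=1$, which are exactly the hypotheses imposed on $\ell$ and $q$ in the present lemma; note also that $\ell\in[0,3)$ is compatible with the first of these since for $\ell\geq 3$ the condition $\frac{\ell}{3}+\frac{1}{q}<1$ would force $\frac{1}{q}<0$. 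Finally, part (iii) follows from Lemma \ref{d-kernel}(iii) with $2\nu+3=3$.

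Since the statement is flagged as ``an immediate consequence'' of Lemma \ref{d-kernel}, I do not expect any obstacle: no new analytic work is needed beyond translating the parameters. The only minor point that deserves a line of explanation is why the range $\ell\in[0,3)$ is natural — precisely because $\frac{\ell}{3}+\frac{1}{q}<1$ with $q\geq 1$ forces $\ell<3$ — and this can be mentioned in passing. Thus the proof will be a short paragraph consisting essentially of the identification $k=h_0$ followed by an application of Lemma \ref{d-kernel}.
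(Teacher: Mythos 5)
Your proposal is correct and coincides with the paper's own treatment: the paper states Lemma \ref{kernel-properties} as ``an immediate consequence of Lemma \ref{d-kernel}'' with no further argument, which is exactly your specialization $k=h_0$ (i.e., $\nu=0$), under which the conditions $\frac{\ell}{3}+\frac{1}{q}<\frac{2}{3}\nu+1$ and $\ell+\frac{2}{q}>\nu+1$ reduce precisely to the stated hypotheses and $2\nu+3=3$ gives part (iii).
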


Concerning decay and integrability with respect to a power of $x$ we have the following result.
\begin{lemma}\label{kernel-properties1}
	Let $\ell\in[0,3/2)$. Assume that  $1\leq q<\infty$ satisfies $\frac{1}{q}+\frac{2}{3}\ell<1$ and $1<\ell+\frac{2}{q}$.
	\begin{enumerate}[(i)]
		\item $|x|^\ell k\in L^q(\rrt)$;
		\item $|x|^{3/2} k\in L^\infty(\rrt)$.
	\end{enumerate}
\end{lemma}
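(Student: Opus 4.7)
The plan is to exploit the explicit integral representation \eqref{kernel} of $h_\nu$ established in Lemma \ref{d-kernel} (specialized to $\nu=0$, so the denominator has overall exponent $3/4$), which expresses $k(x,y)$ as an integral over $t\in(0,\infty)$ of a bounded trigonometric factor against $t\,e^{-t}\,(t^2x^2+(t^2+y^2)^2)^{-3/4}$.

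For part (ii), bounding the cosine trivially by $1$ and using the pointwise inequality $(t^2x^2+(t^2+y^2)^2)^{3/4}\geq t^{3/2}|x|^{3/2}$ reduces the question to the finite integral $\int_0^\infty t^{-1/2}e^{-t}\,dt=\Gamma(1/2)$; hence $|x|^{3/2}k\in L^\infty(\rrt)$, with no use of $q$.

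For part (i), I would mirror the calculation carried out for $|y|^\ell h_\nu$ in the proof of Lemma \ref{d-kernel}, but with the weight now attached to $x$. Applying Minkowski's integral inequality to pull the $L^q_{x,y}$-norm inside the $t$-integral reduces the problem to estimating, for each fixed $t>0$, the inner norm $\||x|^\ell(t^2x^2+(t^2+y^2)^2)^{-3/4}\|_{L^q_{x,y}}$. This is computed by iterated integration: the substitution $u=tx/(t^2+y^2)$ converts the $x$-integral to a constant multiple of $(t^2+y^2)^{q\ell-3q/2+1}\,t^{-(q\ell+1)}$, after which the substitution $y=tz$ turns the $y$-integral into a constant multiple of $t^{2(q\ell-3q/2+1)+1}$. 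Collecting powers of $t$ yields
\[
\||x|^\ell(t^2x^2+(t^2+y^2)^2)^{-3/4}\|_{L^q_{x,y}}=C\,t^{\ell-3+2/q},
\]
so the surviving $t$-integral becomes $\int_0^\infty t^{\ell-2+2/q}e^{-t}\,dt$.

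The main obstacle is simply bookkeeping: three nested integrals must each converge, and each convergence condition must be matched to one of the stated hypotheses. The residual integral $\int_\rr|u|^{q\ell}(1+u^2)^{-3q/4}\,du$ converges under a mild consequence of the upper bound $\frac{1}{q}+\frac{2}{3}\ell<1$; the residual integral $\int_\rr(1+z^2)^{q\ell-3q/2+1}\,dz$ converges iff $\ell<\tfrac{3}{2}-\tfrac{3}{2q}$, which is exactly this upper bound; and the last $t$-integral converges iff $\ell+\frac{2}{q}>1$, the lower bound. The range $\ell\in[0,3/2)$ appears as the weakest necessary condition, obtained from the upper hypothesis in the limit $q\to\infty$.
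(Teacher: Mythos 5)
Your proposal is correct and follows essentially the same route as the paper: the authors simply remark that the proof mirrors that of Lemma \ref{d-kernel}, with the weight $|z|^{q\ell}$ now appearing in the inner $x$-integral, and your explicit bookkeeping (Minkowski, the two scaling substitutions, and the matching of the three convergence conditions $q\ell-\tfrac{3}{2}q<-1$, $\ell<\tfrac{3}{2}-\tfrac{3}{2q}$, and $\ell+\tfrac{2}{q}>1$ to the stated hypotheses) fills in exactly the omitted details, including the pointwise bound $(t^2x^2+(t^2+y^2)^2)^{3/4}\geq t^{3/2}|x|^{3/2}$ for part (ii).
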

\begin{proof}
The proof is very similar to that of Lemma \ref{d-kernel}; so we omit the details. We only point out that a power $|z|^{q\ell}$ appears in the inner integral \eqref{innerint}. Thus, a condition for integrability is $\frac{3}{2}q-q\ell>1$; but this is true because $\frac{1}{q}+\frac{2}{3}\ell<1$ and $\ell<3/2$.
\end{proof}

The next step is to show that solutions of \eqref{shrira-0} decay to zero at infinity at the same rate as the kernel $k$.

\begin{theorem}[Spatial decay in the $y$ variable]\label{inf-decay-y}
Any solitary wave $\ff\in\z$ of \eqref{shrira-0} satisfies
\begin{itemize}
	\item[(i)] $y^3\ff\in L^\infty(\rrt)$; and
	\item[(ii)] $|y|^\kappa\ff\in L^\infty(\rrt)$, $0\leq \kappa\leq3$.
\end{itemize}

\end{theorem}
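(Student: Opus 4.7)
First, (ii) reduces to (i): since $\ff\in L^\infty(\rrt)$ by Remark \ref{vanizero}, the pointwise interpolation
$$
|y|^\kappa|\ff(x,y)| = \bigl(|y|^3|\ff(x,y)|\bigr)^{\kappa/3}|\ff(x,y)|^{1-\kappa/3} \leq \|y^3\ff\|_{L^\infty}^{\kappa/3}\|\ff\|_{L^\infty}^{1-\kappa/3}
$$
holds for every $\kappa\in[0,3]$. So the entire task is (i).

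For (i), my starting point is the convolution identity $\ff=k\ast f(\ff)$ of \eqref{conv-form} combined with $|y|\le|y-y'|+|y'|$ and the binomial theorem, which give
$$
|y|^3|\ff(x,y)| \lesssim \sum_{j=0}^{3}\bigl[(|\cdot|^j|k|)\ast(|\cdot|^{3-j}|f(\ff)|)\bigr](x,y).
$$
Young's inequality then reduces the proof to four products $\||y|^jk\|_{L^{q_j}}\||y|^{3-j}f(\ff)\|_{L^{q'_j}}$, with $1/q_j+1/q'_j=1$. The kernel factors are immediate from Lemma \ref{kernel-properties}: take $q_3=\infty$ (this is the crucial bound $|y|^3 k\in L^\infty$), $q_2>3$, $q_1>3/2$ and $q_0\in(1,2)$. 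Using $|f(u)|\lesssim|u|^{p-1}$, the remaining task is the finiteness of
$$
\int_{\rrt}|y|^{(3-j)q'_j}|\ff|^{(p-1)q'_j}\,dxdy, \qquad j=0,1,2,3.
$$

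The cases $j=3$ and $j=2$ are comparatively easy: $j=3$ amounts to $\ff\in L^{p-1}$, granted by Theorem \ref{regularity}; for $j=2$ the factorization $|y|^{q'_2}|\ff|^{(p-1)q'_2}=(|y|\ff)^{q'_2}|\ff|^{(p-2)q'_2}$, together with a short H\"older split when $1/(p-2)>3/2$, reduces matters to Lemma \ref{decay-1} and Theorem \ref{regularity}.

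The main obstacle is $j=0,1$, where the weights $|y|^3$ or $|y|^2$ on $f(\ff)$ cannot be absorbed into the single-weight bound $|y|\ff\in L^q$ ($q>3/2$) of Lemma \ref{decay-1} when $p$ is close to $p_0=(3+\sqrt5)/2$. The remedy is a bootstrap: first prove the intermediate estimate $y^2\ff\in L^\infty(\rrt)$ by running exactly the same Young-convolution argument with $|y|^3$ replaced by $|y|^2$; at that weight level the analog of the $j=0$ term only involves $|y|^2 f(\ff)$, which can be closed by H\"older decompositions based on $|y|\ff\in L^q$ and the regularity of $\ff$, iterating in small weight increments if necessary. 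With $y^2\ff\in L^\infty$ established, the original $j=0,1$ integrals are handled via
$$
|y|^{3q'_0}|\ff|^{(p-1)q'_0}=(|y|^2\ff)^{3q'_0/2}|\ff|^{(p-\frac52)q'_0}, \qquad |y|^{2q'_1}|\ff|^{(p-1)q'_1}=(|y|^2\ff)^{q'_1}|\ff|^{(p-2)q'_1},
$$
whose integrability reduces, thanks to the arithmetic inequality $p>p_0>5/2$, to $\ff\in L^s$ for a sufficiently large $s$, available from Theorem \ref{regularity} for $q'_0$ and $q'_1$ chosen appropriately in their admissible windows. Summing the four Young estimates yields $y^3\ff\in L^\infty(\rrt)$, from which (ii) follows by the interpolation above.
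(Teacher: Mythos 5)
Your overall architecture coincides with the paper's: the convolution representation $\ff=k\ast f(\ff)$, Young's inequality with the weight split between kernel and nonlinearity, the endpoint bound $|y|^3k\in L^\infty$ from Lemma \ref{kernel-properties}, and a bootstrap to build up the weight carried by $\ff$. The reduction of (ii) to (i) by pointwise interpolation with $\|\ff\|_{L^\infty}$ is also fine. The problem lies in the bootstrap, which is exactly where you correctly locate the difficulty but do not actually resolve it.

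Concretely: you assert that the intermediate estimate $y^2\ff\in L^\infty(\rrt)$ can be "closed by H\"older decompositions based on $|y|\ff\in L^q$ and the regularity of $\ff$". But the problematic term at that level is $|y|^2|f(\ff)|\lesssim |y|^2|\ff|^{p-1}$, and a H\"older factorization $(|y|\,|\ff|)^{\theta}|\ff|^{\,p-1-\theta}$ absorbs the weight only with $\theta=2$, which forces $p\geq3$; for $p\in(p_0,3)$, with $p_0=(3+\sqrt5)/2\approx 2.618$, the single-weight information of Lemma \ref{decay-1} cannot absorb a weight of $2$ on $f(\ff)$ in one step, so this step fails as written. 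Your hedge "iterating in small weight increments if necessary" names the right idea but omits the mechanism, which is where all the work is. The paper makes it precise via a \emph{multiplicative} gain: if $|y|^{\gamma}\ff\in L^r$, then $|y|^{\gamma(p-1)}|f(\ff)|\lesssim\big(|y|^{\gamma}|\ff|\big)^{p-1}\in L^{r/(p-1)}$, so each pass through the convolution multiplies the achievable weight by $p-1$. Starting from $\gamma=1$ this gives $\gamma_1=p-1$, then $\gamma_2=\min\{3,(p-1)^2\}$, and the standing hypothesis $p>p_0$ (equivalently, $p-1$ exceeds the golden ratio, whence $(p-1)^3>3$) guarantees $3/(p-1)<\gamma_2$, so the final $L^\infty$ estimate closes after exactly two intermediate claims. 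Replace your additive increments by this multiplicative scheme; once $|y|^{\gamma_2}\ff\in L^r$ is available, your treatment of the $j=0,1$ terms (and the choice of $q_0'$ large enough that $(p-\tfrac52)q_0'>1$, needed since $\ff\notin L^1$) goes through as you indicate.
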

\begin{proof}
It suffices to prove (i), because (ii) follows immediately from (i).
First, by using \eqref{conv-form}, we recall the trivial inequality
\begin{equation}\label{conv-1}
|y|^{\ell}|\ff|\lesssim|y| ^{\ell}|k|\ast|f(\ff)|+|k|\ast||y|^{\ell}|f(\ff)|,
\end{equation}
which holds for any $\ell\geq0$. Let $\gamma_1=p-1$.\\
\noindent {\bf Claim 1.}
 $|y|^{\gamma_1}\ff\in L^r(\rrt)$, for any $1\leq r\leq\infty$.

 Indeed, choose $r_1,r_2,q_1,q_2$ such that  $1+\frac{1}{r}=\frac{1}{r_1}+\frac{1}{r_2}=\frac{1}{q_1}+\frac{1}{q_2}$, $r_1\in(1,2)$, $q_2\gamma_1>1$, $r_2\gamma_1>\frac{3}{2}$ and $q_1>\frac{3}{3-\gamma_1}$.
From \eqref{conv-1} with $\ell=\gamma_1$ and the Young inequality it follows that
 \[
 \||y|^{\gamma_1}\ff\|_{L^r}
 \lesssim
 \||y| ^{\gamma_1}k\|_{L^{q_1}}\|\ff\|_{L^{q_2\gamma_1}}^{\gamma_1}
 +
 \|k\|_{L^{r_1}}\|y\ff\|_{L^{r_2\gamma_1}}^{\gamma_1}.
 \]
Thanks to Lemma \ref{kernel-properties}, Lemma \ref{decay-1}, and Theorem \ref{regularity}, the right-hand side of the above inequality is finite and the claim is established.\\

Next we define  $\gamma_2=\min\{3,(p-1)^2\}$ and divide the proof into two cases.\\

\noindent {\bf Case 1.}  $\gamma_2=(p-1)^2$.

 Here, we observe the following

 \noindent {\bf Claim 2.}  $|y|^{\gamma_2}\ff\in L^r(\rr^2)$,
 for any $\frac{3}{\gamma_1(3-\gamma_1)}<r\leq\infty$.

In fact,  by using \eqref{conv-1} with $\ell=\gamma_2$ and an argument similar to that in  Claim 1, we have
\[
\||y|^{\gamma_2}\ff\|_{L^r}
\lesssim
\||y| ^{\gamma_2}k\|_{L^{q_1}}\|\ff\|_{L^{q_2\gamma_1}}^{\gamma_1}
+
\|k\|_{L^{r_1}}\||y|^{\gamma_1}\ff\|_{L^{r_2\gamma_1}}^{\gamma_1}<+\infty,
\]
provided $1+\frac{1}{r}=\frac{1}{r_1}+\frac{1}{r_2}=\frac{1}{q_1}+\frac{1}{q_2}$, $r_1\in(1,2)$, $q_2\gamma_1>1$, $r_2\gamma_2>\frac{3}{2}$ and $q_1>\frac{3}{3-\gamma_2}$. Note we used Claim 1 for the last term.   From our choices, $\frac{1}{q_2}<\gamma_1$ and $\frac{1}{q_1}<\frac{3-\gamma_2}{3}$, which implies $\frac{1}{q_1}+\frac{1}{q_2}<1+\frac{\gamma_1(3-\gamma_1)}{3}$. This forces the restriction $r>\frac{3}{\gamma_1(3-\gamma_1)}$ and shows our claim.

Now, since $p\geq p_0=(3+\sqrt{5})/2$, we deduce that $\gamma_1\gamma_2>3>\gamma_2$. Thus,
\begin{equation}\label{inflem1}
\|y^3\ff\|_{L^{\infty}}\lesssim
\|y^3k\|_{L^{\infty}}\|\ff\|_{L^{\gamma_1}}^{\gamma_1}+
\|k\|_{L^{a}}\||y|^{\frac{3}{p-1}}\ff\|_{L^{b\gamma_1}}^{\gamma_1},
\end{equation}
where $1=\frac{1}{a}+\frac{1}{b}$. From Theorem \ref{regularity} and Lemma \ref{kernel-properties} the first term on the right-hand side of \eqref{inflem1} is finite. Also, by choosing $a\in(0,1)$ and $b$ satisfying $b\gamma_2>\frac{3}{3-\gamma_1}$, we obtain $k\in L^a(\rrt)$ and
\begin{equation}\label{inflem2}
\||y|^{\frac{3}{p-1}}\ff\|_{L^{b\gamma_1}}^{\gamma_1}\lesssim
\|\ff\|_{L^{b\gamma_1}}^{\gamma_1}+\||y|^{\gamma_2}\ff\|_{L^{b\gamma_1}}^{\gamma_1},
\end{equation}
where we used $\frac{3}{p-1}=\frac{3}{\gamma_1}<\gamma_2$. The right-hand side of \eqref{inflem2} is finite thanks to Theorem \ref{regularity} and Claim 2. This proves that the right-hand side of \eqref{inflem1} is finite and concludes the proof of the theorem in this case.\\

\noindent {\bf Case 2.}  $\gamma_2=3$.

Here, if $\frac{1}{a}+\frac{1}{b}=1$, we write
\[
\begin{split}
\|y^3\ff\|_{L^{\infty}}&\lesssim \|y^3k\|_{L^{\infty}}\|\ff\|_{L^{\gamma_1}}^{\gamma_1}+
\|k\|_{L^{a}}\||y|^{3}f(\ff)\|_{L^{b}}\\
&\lesssim \|y^3k\|_{L^{\infty}}\|\ff\|_{L^{\gamma_1}}^{\gamma_1}+
\|k\|_{L^{a}}\left(\|\ff\|_{L^{b\gamma_1}}^{\gamma_1}\||y|^{\gamma_1}\ff\|_{L^{b\gamma_1}}^{\gamma_1} \right)
\end{split}
\]
where we used that $3<(p-1)^2=\gamma_1^2$. By choosing $a\in(0,1)$, $b\gamma_1>1$, using Claim 1, and arguing as in Case 1, we complete the proof of the theorem.
\end{proof}

Interest is now turned to the decay with respect to the variable $x$. Let us start with the following result.

\begin{lemma}\label{decay-x}
Let $q_0=2(p-1)$. Then, for any $q\in(q_0,\infty)$ and $\ell\geq0$ satisfying $\ell q<1/2$, we have $|x|^{\ell}\ff\in L^q(\rrt)$.
\end{lemma}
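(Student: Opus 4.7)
The strategy is to exploit the convolution representation $\ff = k\ast f(\ff)$ from \eqref{conv-form} together with the $x$-decay and integrability of the kernel $k$ provided by Lemma \ref{kernel-properties1}, in the same spirit as the proof of Lemma \ref{decay-1}. Using the elementary sublinear bound $|x|^\ell \leq C_\ell(|x-z|^\ell + |z|^\ell)$, I split
\[
|x|^\ell |\ff(x,y)| \lesssim \bigl((|x|^\ell |k|)\ast |f(\ff)|\bigr)(x,y) + \bigl(|k|\ast (|z|^\ell |f(\ff)|)\bigr)(x,y).
\]
Applying Young's convolution inequality (allowing mixed $L^{q_1}_x L^{a}_y$ norms if needed, to open up the range of $q_1$ forced by Lemma \ref{kernel-properties1}) and using the growth $|f(\ff)|\lesssim |\ff|^{p-1}$, I obtain
\[
\||x|^\ell \ff\|_{L^q} \lesssim \||x|^\ell k\|_{L^{q_1}}\,\|\ff\|_{L^{(p-1)q_2}}^{p-1} + \|k\|_{L^{r_1}}\,\||x|^\ell f(\ff)\|_{L^{r_2}},
\]
subject to the Young conditions $1+1/q=1/q_1+1/q_2 = 1/r_1+1/r_2$. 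By Lemma \ref{kernel-properties1}, the kernel norms are finite provided $q_1\in\bigl(3/(3-2\ell),\,2/(1-\ell)\bigr)$ and $r_1\in[1,2)$; the $L^s$ norms of $\ff$ are finite by Theorem \ref{regularity}.

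Next I handle the self-referencing term $\||x|^\ell f(\ff)\|_{L^{r_2}}$ through Hölder's inequality applied to the factorization $|x|^\ell|\ff|^{p-1}=(|x|^\ell|\ff|)\cdot|\ff|^{p-2}$, which yields
\[
\||x|^\ell f(\ff)\|_{L^{r_2}} \lesssim \||x|^\ell\ff\|_{L^q}\,\|\ff\|_{L^{s}}^{p-2},\qquad \frac{1}{r_2}=\frac{1}{q}+\frac{p-2}{s},
\]
with $r_2<q$. Collecting everything gives an inequality of the form $\||x|^\ell\ff\|_{L^q} \leq C_1 + C_2\,\||x|^\ell\ff\|_{L^q}$. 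To absorb the second term into the left-hand side, I localize the convolution $|k|\ast(|z|^\ell|f(\ff)|)$ to the tail $\{|z|>M\}$; since $\ff$ is continuous and vanishes at infinity (Remark \ref{vanizero}), the coefficient $\|\ff\,\chi_{|z|>M}\|_{L^{s}}^{p-2}$ can be made arbitrarily small for $M$ large, which legitimizes the absorption after treating the bounded region $\{|z|\leq M\}$ by a trivial $L^\infty$ bound on the weight. The a priori finiteness needed to perform the absorption rigorously is ensured by truncating $|x|^\ell$ to $\min(|x|^\ell,N^\ell)$, deriving a bound uniform in $N$, and invoking monotone convergence as $N\to\infty$.

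The main obstacle is exponent bookkeeping: the requirement $|x|^\ell k\in L^{q_1}$ constrains $q_1$ to the small window given by Lemma \ref{kernel-properties1}, and this is incompatible with Young's inequality applied in the joint variables when $q>2$, hence the need to work with mixed $x$--$y$ norms and to apply Young's inequality successively in each variable. The constraint $\ell q<1/2$ is precisely what guarantees that $r_2$ can be selected in the window $(2,\min(q,1/(2\ell)))$ simultaneously with $r_1\in(1,2)$ and with the Hölder relation determining $s$, so that the whole chain of inequalities closes. Once the exponents have been fixed, all the analytic content is already contained in Lemma \ref{kernel-properties1}, Theorem \ref{regularity}, and Remark \ref{vanizero}.
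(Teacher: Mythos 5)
Your route is genuinely different from the paper's. The paper does not close the estimate directly in $L^q(\rrt)$ with the weight $|x|^\ell$: it first proves the mixed weighted bound $|x|^\ell\langle y\rangle^{-s_2}\ff\in L^{r'}(\rrt)$ by introducing the regularized weight $g_\eps=\frac{|x|^\ell}{\langle x\rangle_\eps^{s_1}\langle y\rangle^{s_2}}\ff$, commuting the weight past the convolution with the Peetre-type estimates of Lemma \ref{integal-estimates}, absorbing via the smallness of $\langle y\rangle^{s_2/(p-2)}|\ff|$ on $\{|x|>N\}$ (which comes from Theorem \ref{inf-decay-y}), and letting $\eps\to0$ by Fatou; it then recovers $\||x|^\ell\ff\|_{L^q}$ by H\"older against the $L^\infty$ bound $\langle y\rangle^{3}\ff\in L^\infty$. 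The hypothesis $\ell q<1/2$ is exactly the price of that last interpolation step (the admissible window for $s_1$ forces $\ell<\frac{1}{2q}$). Your proposal instead runs the splitting $|x|^\ell\lesssim|x-x'|^\ell+|x'|^\ell$ through \eqref{conv-form} and closes in the joint $L^q$ norm by factoring $|x|^\ell|\ff|^{p-1}=(|x|^\ell|\ff|)\,|\ff|^{p-2}$ and absorbing the tail $\{|x'|>M\}$; the $\min(|x|^\ell,N^\ell)$ truncation plus monotone convergence plays the role of the paper's $\langle x\rangle_\eps$ regularization plus Fatou. Both proofs share the same engine (weight regularization, tail smallness, absorption), but yours avoids the mixed weight and the $y$-decay theorem entirely, and is arguably more direct; the paper's version, on the other hand, is exactly calibrated to feed the inductive Steps 1--5 of the subsequent $x$-decay theorem.

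Two points need attention before this counts as a proof. First, your stated obstacle is not real: with $\ell<\frac{1}{2q}$ the window $q_1\in\bigl(\tfrac{3}{3-2\ell},\tfrac{2}{1-\ell}\bigr)$ from Lemma \ref{kernel-properties1} is perfectly compatible with Young's inequality in the joint variables, since one may take $\tfrac{1}{q_1}$ close to $1-\tfrac{2\ell}{3}>\tfrac1q$ and then $q_2>1$ with $(p-1)q_2>1$; the detour through mixed $L^{q_1}_xL^a_y$ norms is unnecessary and only obscures the argument. Second, the exponent bookkeeping --- which you defer --- is where the content lies, and you should actually carry it out: checking it reveals that your scheme only uses $q>2$, $\ell<3/2$, $r_1\in(1,2)$, $\tfrac{1}{r_2}=1+\tfrac1q-\tfrac1{r_1}=\tfrac1q+\tfrac{1}{\sigma}$ with $\sigma(p-2)>1$, and never invokes $\ell q<1/2$. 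Either this is correct and you have proved a strictly stronger statement (which would make much of the paper's subsequent bootstrap redundant and therefore deserves an explicit, careful verification), or the constraint enters somewhere you have not identified; as written, the proposal does not resolve which. In particular your parenthetical claim that $\ell q<1/2$ is ``precisely what guarantees'' $r_2\in(2,\min(q,1/(2\ell)))$ does not match the actual Young/H\"older relations, which place $r_2\in\bigl(\tfrac{2q}{q+2},q\bigr)$ independently of $\ell$.
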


Before proving Lemma \ref{decay-x} we recall the following.

\begin{lemma}\label{integal-estimates}
Let $j\in\mathbb{N}$. Suppose also that $\ell$ and $m$ are two constants satisfying $0<\ell<m-j$.
Then there exists $C>0$, depending only on $\ell$ and $m$,
such that for all $\epsilon\in(0,1]$,  we have
\begin{equation}\label{est-1}
\int_{\rr^j}\frac{|a|^\ell}{(1+\epsilon|a|)^m(1+|b-a|)^m}\;\dd a
\leq\frac{C\;|b|^\ell}{(1+\epsilon|b|)^m},\qquad\forall\;b\in\rr^j,\;|b|\geq1,
\end{equation}
and
\begin{equation}\label{est-2}
\int_{\rr^j}\frac{\dd a}{(1+\epsilon|a|)^m(1+|b-a|)^m}
\leq\frac{C}{(1+\epsilon|b|)^m},\qquad\forall\;b\in\rr^j.
\end{equation}
\end{lemma}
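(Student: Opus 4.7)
The plan is to establish both \eqref{est-1} and \eqref{est-2} via a partition of $\rr^j$ based on the relative sizes of $|a|$ and $|b|$. Set
\begin{equation*}
D_1=\{|a|\leq |b|/2\},\qquad D_2=\{|b|/2\leq |a|\leq 2|b|\},\qquad D_3=\{|a|\geq 2|b|\}.
\end{equation*}
The triangle inequality gives $|b-a|\geq |b|/2$ on $D_1$ and $|b-a|\geq |a|/2$ on $D_3$, while $|a|$ is comparable to $|b|$ on $D_2$. In each region one of the two factors in the denominator is bounded from below by $(1+\epsilon|b|)^m$ (up to a universal constant), after which the remaining integration is routine.

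Consider \eqref{est-2} first. On $D_2$, $|a|\geq |b|/2$ gives $(1+\epsilon|a|)^m\gtrsim (1+\epsilon|b|)^m$; pulling this factor out and using $m>j$ to integrate $(1+|b-a|)^{-m}$ over $\rr^j$ yields the bound $C(1+\epsilon|b|)^{-m}$. On $D_3$, $(1+|b-a|)^m\gtrsim (1+|a|)^m$ reduces the matter to $\int_{|a|\geq 2|b|}(1+|a|)^{-2m}\,da\lesssim (1+|b|)^{j-2m}$, which is dominated by $C(1+\epsilon|b|)^{-m}$ since $\epsilon\leq 1$ and $m>j$. On $D_1$ we pull out $(1+|b-a|)^m\gtrsim (1+|b|)^m$, leaving
\begin{equation*}
\int_{|a|\leq |b|/2}(1+\epsilon|a|)^{-m}\,da\leq C\min\{|b|^j,\epsilon^{-j}\}
\end{equation*}
by a change of variable; a short case analysis according to whether $\epsilon|b|\leq 1$ or $\epsilon|b|\geq 1$, together with $m>j$ and $\epsilon\leq 1$, delivers the required bound.

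For \eqref{est-1} the same partition is used. On $D_1\cup D_2$ the pointwise inequality $|a|^\ell\leq C|b|^\ell$ reduces the matter to \eqref{est-2}, yielding the bound $C|b|^\ell(1+\epsilon|b|)^{-m}$. On $D_3$, where $|a|^\ell$ cannot be dominated by $|b|^\ell$, we instead estimate
\begin{equation*}
\int_{|a|\geq 2|b|}|a|^\ell(1+|a|)^{-2m}\,da\lesssim (1+|b|)^{\ell+j-2m},
\end{equation*}
which converges thanks to the hypothesis $\ell<m-j$. The resulting bound is then checked to be no larger than $C|b|^\ell(1+\epsilon|b|)^{-m}$ by invoking $|b|\geq 1$, $\epsilon\leq 1$, and once more $\ell<m-j$.

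The one genuinely delicate point is uniformity in $\epsilon$: the integral $\int_{\rr^j}(1+\epsilon|a|)^{-m}\,da\simeq \epsilon^{-j}$ blows up as $\epsilon\to 0^+$, so one cannot hope to handle the factor $(1+\epsilon|a|)^{-m}$ crudely. This factor is used nontrivially only on $D_1$, and there the companion factor $(1+|b|)^{-m}$ (respectively $(1+|b|)^{\ell-m}$ for \eqref{est-1}) provides just enough decay in $|b|$ to absorb $\epsilon^{-j}$ via the strict inequality $m>j$. The restriction $|b|\geq 1$ in \eqref{est-1} is precisely what is needed to compare $|b|^\ell$ against the subdominant powers of $|b|$ arising in this absorption step.
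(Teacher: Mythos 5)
The paper does not actually prove this lemma; it simply cites Lemma 3.1.1 of Bona--Li \cite{bl}, so your self-contained argument by splitting $\rr^j$ into $D_1,D_2,D_3$ according to the size of $|a|$ relative to $|b|$ is exactly the standard route, and the treatments of $D_1$ and $D_2$ are correct (including the uniformity-in-$\epsilon$ discussion for $D_1$, which is indeed the delicate point).

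There is, however, one step that fails as written: on $D_3$ you reduce the integrand to $(1+|a|)^{-2m}$ (resp.\ $|a|^\ell(1+|a|)^{-2m}$), which implicitly uses $(1+\epsilon|a|)^{-m}\leq(1+|a|)^{-m}$. This inequality goes the wrong way for $\epsilon<1$: since $1+\epsilon|a|\leq 1+|a|$, one has $(1+\epsilon|a|)^{-m}\geq(1+|a|)^{-m}$, and the gap is unbounded as $\epsilon\to0^+$ --- precisely the non-uniformity you warn against in your closing paragraph, smuggled in on $D_3$. The region is still harmless, but the correct way to see it is different: on $D_3$ one has $|a|\geq 2|b|\geq|b|$, hence $(1+\epsilon|a|)^{-m}\leq(1+\epsilon|b|)^{-m}$ can be pulled out directly, and for \eqref{est-2} the remaining factor integrates to a constant since $m>j$ (so $D_3$ is in fact subsumed by your $D_2$ argument). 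For \eqref{est-1} on $D_3$ one additionally absorbs the growth of $|a|^\ell$ into the other factor via $(1+|b-a|)^{-\ell}\lesssim|a|^{-\ell}$ (valid there because $|b-a|\geq|a|/2$), leaving $\int(1+|b-a|)^{-(m-\ell)}\,\dd a<\infty$ by the hypothesis $\ell<m-j$, and then $|b|\geq1$ gives $(1+\epsilon|b|)^{-m}\leq|b|^\ell(1+\epsilon|b|)^{-m}$. With this local repair your proof is complete; the architecture and all other estimates are sound.
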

\begin{proof}
The proof is quite elementary and it is essentially the same as that
of Lemma 3.1.1 in \cite{bl} (see also \cite{jpha}).
\end{proof}

\begin{proof}[Proof of Lemma \ref{decay-x}]
Fix $r\in(1,2)$ to be chosen later  and take $s_1\in(\frac{1}{r'},\frac{3}{2r'})$ and $s_2\in(\frac{1}{r'},3(p-2))$, where $r'$ is the H\"older conjugate of $r$.

We first claim that, for any  $\ell\in[0,s_1-\frac{1}{r'})$, we have  $|x|^\ell\langle y\rangle^{-s_2}\ff\in L^{r'}(\rrt)$. Indeed,  for $0<\eps\ll1$, define $g_\eps$ by
\[
g_\eps(x,y)=\frac{|x|^\ell}{{\langle x\rangle}^{s_1}_\eps{\langle y\rangle}^{s_2}}\ff(x,y),
\]
where $\langle y\rangle=1+|y|$ and $\langle x\rangle_\eps=1+\eps|x|$. Since $\ff\in L^\infty(\rrt)$, from the choices of $\ell$ and $s_j,j=1,2$, it  is easy to see that $g_\eps\in L^{r'}(\rrt)$. Now, given  any $\delta > 0$, there exists a constant $N>1$ (depending on $\delta$) such that
\begin{equation}\label{yddecay}
\langle y\rangle^{\frac{s_2}{p-2}}|\ff|<\delta, \qquad \mbox{for}\;\; |x|>N.
\end{equation}
To see this, choose a number $a\in(0,1)$ satisfying $\frac{s_2}{a(p-2)}<3$ (this is possible because $s_2<3(p-2)$). Then,
\[
\begin{split}
\langle y\rangle^{\frac{s_2}{p-2}}|\ff|\lesssim |\ff|+|y|^{\frac{s_2}{p-2}}|\ff|^a|\ff|^{1-a}\leq \||y|^{\frac{s_2}{ap-2}}\ff\|^a_{L^\infty}|\ff|^{1-a}\lesssim |\ff|^{1-a},
\end{split}
\]
where we used Theorem \ref{inf-decay-y}. Since $\ff$ goes to zero at infinity (see Theorem \ref{regularity} and Remark \ref{vanizero}), this last inequality implies \eqref{yddecay}.

Now we decompose $$\rr=\{|x|>N\}\cup\{|x|\leq N\}=:I_1\cup I_2.$$ Then, by using equation \eqref{conv-form}, H\"{o}lder's inequality  and Lemma \ref{kernel-properties1}, we get
\[\begin{split}
\int_{I_1\times\rr}&|g_\eps(x,y)|^{r'}\dd x\dd y\\
&\leq\int_{I_1\times\rr}|g_\eps(x,y)|^{r'-1}\frac{|x|^\ell}{\langle x\rangle^{s_1}_\eps\langle y\rangle^{s_2}}k\ast f(\ff)\dd x\dd y\\
&\leq \int_{I_1\times\rr}|g_\eps(x,y)|^{r'-1}\frac{|x|^\ell}{\langle x\rangle^{s_1}_\eps\langle y\rangle^{s_2}}
\|k\langle x\rangle^{s_1}\|_{L^r(\rrt)}\|\langle x\rangle^{-s_1}\ast f(\ff)\|_{L^{r'}(\rrt)}\dd x\dd y\\
&\lesssim\|g_\eps\|_{L^{r'}(I_1\times\rr)}^{r'-1}\left(\int_{I_1\times\rr}\frac{|x|^{\ell r'}}{\langle x\rangle^{s_1r'}_\eps\langle y\rangle^{s_2r'}}\|\langle x\rangle^{-s_1}\ast f(\ff)\|_{L^{r'}(\rrt)}^{r'}\dd x\dd y\right)^{1/{r'}}.
\end{split}\]
Since $g_\epsilon\in L^{r'}(\rrt)$, we can divide both sides of the above inequality by $\|g_\eps\|_{L^{r'}}^{r'-1}$ to obtain
$$
\int_{I_1\times\rr}|g_\eps(x,y)|^{r'}\dd x\dd y\lesssim \int_{I_1\times\rr}\frac{|x|^{\ell r'}}{\langle x\rangle^{s_1r'}_\eps\langle y\rangle^{s_2r'}}\|\langle x\rangle^{-s_1}\ast f(\ff)\|_{L^{r'}(\rrt)}^{r'}\dd x\dd y.
$$
By using the definition of convolution, Fubini's theorem and Lemma \ref{integal-estimates},
 \[\begin{split}
\int_{I_1\times\rr}&|g_\eps(x,y)|^{r'}\dd x\dd y\\
&\lesssim
\int_\rrt |f(\ff)(x',y')|^{r'}
\int_{I_1\times\rr}\frac{|x|^{\ell r'}}{\langle x\rangle^{s_1r'}_\eps\langle y\rangle^{s_2r'}\langle x-x'\rangle^{s_1r'}}
\dd x\dd y\;\dd x'\dd y'\\
&\lesssim \int_{I_1\times\rr}|f(\ff)(x',y')|^{r'}\frac{|x'|^{\ell r'}}{\langle x'\rangle^{s_1r'}_\eps}\dd x'\dd y'\\
&\qquad+
\int_{I_2\times\rr}|f(\ff)(x',y')|^{r'}
\int_{I_1\times\rr}\frac{|x|^{\ell r'}}{\langle x\rangle^{s_1r'}_\eps\langle y\rangle^{s_2r'}\langle x-x'\rangle^{s_1r'}}
\dd x\dd y\;\dd x'\dd y'\\
&\lesssim\delta^{r'(p-2)}
\int_{I_1\times\rr}|\ff(x',y')|^{r'}\frac{|x'|^{\ell r'}}{\langle x'\rangle^{s_1 r'}_\eps\langle y'\rangle^{s_2r'}}
\dd x'\dd y'\\
&\qquad+
\int_{I_2\times\rr}|\ff(x',y')|^{r'(p-1)}
\int_{\rrt}\frac{(|x|+2N)^{\ell r'}}{\langle x\rangle^{s_1 r'}\langle y\rangle^{s_2 r'}}\dd x\dd y\;\dd x'\dd y'\\
&\lesssim
\delta^{r'(p-2)}\int_{I_1\times\rr}|g_\eps(x',y')|^{r'}\dd x'\dd y'+
\int_{I_2\times\rr}|\ff(x',y')|^{r'(p-1)}\dd x'\dd y'.
\end{split} \]
By choosing $\delta$ sufficiently small, we deduce that
\[
\int_{I_1\times\rr}|g_\eps(x,y)|^{r'}\dd x\dd y
\leq C
\int_{I_2\times\rr}|\ff(x',y')|^{r'(p-1)}\dd x'\dd y'.
\]
Since the constant $C$ appearing in the right-hand side of the preceding estimate
is independent of $\eps$, an application of Fatou's lemma gives
\begin{equation}\label{gepi1}
\int_{I_1\times\rr}\frac{|x|^{\ell r'}}{\langle y\rangle^{s_2r'}}|\ff(x,y)|^{r'}\dd x\dd y
\lesssim 1.
\end{equation}
On the other hand, clearly
\begin{equation}\label{gepi2}
\int_{I_2\times\rr}\frac{|x|^{\ell r'}}{\langle y\rangle^{s_2r'}}|\ff(x,y)|^{r'}\dd x\dd y
\lesssim \|\ff\|_{L^{r'}(\rrt)}^{r'}.
\end{equation}
A combination of \eqref{gepi1} and \eqref{gepi2} then establishes that
 $|x|^\ell\langle y\rangle^{-s_2}\ff\in L^{r'}(\rrt)$ for any $r'\in(2,\infty)$, $\ell\in[0,s_1-1/r')$ and $s_2\in(\frac{1}{r'},3(p-2))$, which is precisely our claim.

In order to complete the proof of the lemma, we fix $q_0=2(p-1)$ and take  $q\in(q_0,\infty)$. Let $\nu=\frac{q}{p-1}$  and note that $2<\nu<q$. Now, for any $\ell\geq0$ and $s>0$, we infer that
\begin{equation}\label{test-eq}
\||x|^{\ell}\ff\|_{L^{q}(\rrt)}\leq
\||x|^{\frac{q\ell}{\nu}}\langle y\rangle^{-\frac{s}{\nu}}\ff\|_{L^{\nu}(\rrt)}^{\frac{\nu}{q}}
\|\langle y\rangle^{\frac{s}{q-\nu}}\ff\|_{L^\infty(\rrt)}^{\frac{q-\nu}{q}}.
\end{equation}
We claim that by choosing  $s\in(1,3\nu(p-2))$, the right-hand side of \eqref{test-eq} is finite. Indeed, since the function $t\mapsto\frac{t}{p-1}-\frac{1}{q}$ tends to 0, as $t\to \frac{p-1}{q}$, and to $\frac{1}{2q}$ as $t\to\frac{3(p-1)}{2q}$, we can find a number $s_1\in\left(\frac{p-1}{q},\frac{3(p-1)}{2q}\right)$ such that
$$
0\leq \ell<\frac{s_1}{p-1}-\frac{1}{q}.
$$
With this inequality in hand, all assumption in our claim above hold and the first term in \eqref{test-eq} is finite. The second one also finite thanks to  Theorem \ref{inf-decay-y}. Note that $\frac{s}{q-\nu}<3$ in view of our choices of $\nu$ and $s$.
This completes the proof.
\end{proof}

\begin{theorem}[Spatial decay in the $x$ variable]
Any nontrivial solitary wave $\ff\in\z$ of \eqref{shrira-0} satisfies $|x|^{3/2}\ff\in L^\infty(\rrt)$.
\end{theorem}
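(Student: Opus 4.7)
The approach rests on the convolution representation $\varphi = k * f(\varphi)$ from \eqref{conv-form} combined with the sharp kernel estimate $|x|^{3/2}k \in L^\infty(\rrt)$ provided by Lemma \ref{kernel-properties1}(ii). Using the elementary inequality $|x|^{3/2} \leq C(|x-x'|^{3/2} + |x'|^{3/2})$, I would split
\[
|x|^{3/2}|\varphi(x,y)| \leq C\bigl(|x|^{3/2}|k|\bigr)*|f(\varphi)|(x,y) + C|k|*\bigl(|x|^{3/2}|f(\varphi)|\bigr)(x,y).
\]
The first piece belongs to $L^\infty$ via Young's inequality $L^\infty*L^1\to L^\infty$, using $\||x|^{3/2}k\|_{L^\infty}<\infty$ together with $\|f(\varphi)\|_{L^1}\lesssim \|\varphi\|_{L^{p-1}}^{p-1}<\infty$ from Theorem \ref{regularity}. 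For the second, an application of Young $L^a * L^{a'} \to L^\infty$ with $a \in (1,2)$ (so that $\|k\|_{L^a}<\infty$ by Lemma \ref{kernel-properties}) reduces the problem to bounding $\int |x|^{3a'/2}|\varphi|^{(p-1)a'}\,\dd x\,\dd y$.

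The main obstacle is that Lemma \ref{decay-x} supplies only the weak decay $|x|^\ell \varphi \in L^q$ with the constraint $\ell q < 1/2$, which falls short of absorbing the weight $|x|^{3a'/2}$ with $a' > 2$ in a single Hölder step. My plan is an iterative bootstrap progressively producing stronger pointwise-weighted $L^\infty$ bounds. Fix $Q > 2(p-1)$ and $\ell_0$ with $\ell_0 Q < 1/2$, so that Lemma \ref{decay-x} yields $|x|^{\ell_0}\varphi \in L^Q$. Choosing $a' = Q/(p-1)$ (hence $a = Q/(Q-(p-1)) \in (1,2)$), the identity
\[
\int |x|^{\ell_1 a'}|\varphi|^{(p-1)a'}\,\dd x\,\dd y = \int (|x|^{\ell_0}\varphi)^Q\,|\varphi|^{(p-1)a'-Q}\,\dd x\,\dd y,\qquad \ell_1 = \ell_0(p-1),
\]
combined with the bound $\varphi\in L^\infty$, gives $|x|^{\ell_1}\varphi \in L^\infty$.

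Once $|x|^{\ell_n}\varphi \in L^\infty$ is established, the same splitting with weight $|x|^{\ell_{n+1}}$ and the Hölder interpolation
\[
\int |x|^{\ell_{n+1}a'}|\varphi|^{(p-1)a'}\,\dd x\,\dd y \leq \|x^{\ell_n}\varphi\|_{L^\infty}^{m}\,\|\varphi\|_{L^{(p-1)a'-m}}^{(p-1)a'-m}, \qquad m = \ell_{n+1}a'/\ell_n,
\]
closes for any $\ell_{n+1}<(p-1)\ell_n$, provided $a'$ is chosen large enough that $(p-1)a'-m > 1$, which is possible precisely when $\ell_{n+1}/\ell_n < p-1$. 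Since $p-1>1$, the sequence $\ell_n$ grows geometrically and exceeds $3/2$ in finitely many steps; the conclusion $|x|^{3/2}\varphi \in L^\infty$ then follows from the trivial estimate $|x|^{3/2} \leq 1 + |x|^{\ell_n}$ combined with $\varphi \in L^\infty$ (Remark \ref{vanizero}). The delicate technical point throughout is coordinating, at each step, the Hölder exponent $m$ and the Young exponent $a'$ so that the weight on $|x|$ vanishes exactly, the conjugate $a = a'/(a'-1)$ remains in the admissible range $(1,2)$ where $\|k\|_{L^a}<\infty$, and the residual $\varphi$-integrability $(p-1)a'-m > 1$ survives via Theorem \ref{regularity}.
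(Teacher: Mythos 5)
Your proposal is correct and is essentially the paper's own argument: both rest on the convolution form \eqref{conv-form}, the splitting of the weight $|x|^{3/2}$ between the kernel and $f(\ff)$, Young's inequality together with the kernel bounds of Lemmas \ref{kernel-properties} and \ref{kernel-properties1}, and a finite bootstrap seeded by Lemma \ref{decay-x} (the paper tracks a whole family of weighted $L^q$ bounds at each stage and splits into the cases $p(p-1)>5$ and $p\in[p_0,p_1]$, while you track only $L^\infty$ bounds with a geometric gain of a factor $p-1$ per step -- a difference of bookkeeping, not of method). One small correction: the iteration cannot actually produce $\ell_n>3/2$, because $|x|^{\ell}k$ belongs to no $L^q$ when $\ell>3/2$ (the kernel decays only like $|x|^{-3/2}$), so rather than invoking $|x|^{3/2}\leq 1+|x|^{\ell_n}$ you should stop the bootstrap as soon as some $\ell_n>3/(2(p-1))$ ($<3/2$) is reached, at which point your opening weight-$3/2$ splitting -- the $L^\infty\ast L^1$ pairing for the first piece and the H\"older step against $\||x|^{\ell_n}\ff\|_{L^\infty}$ for the second -- closes the argument exactly at the exponent $3/2$.
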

\begin{proof}
	The proof is analogous to   that of Theorem \ref{inf-decay-y}. We divide it into several steps.
	
\noindent {\bf Step 1.}  First we note from \eqref{conv-1} that $|x|^\ell \ff\in L^{q}(\rrt)$ for any $2<q\leq \infty$ and $0\leq2\ell<\frac{1}{2}+\frac{1}{q}$. In fact, by  choosing $r_1,r_2>1$, $q_1\in(1,2)$ and $q_2\in(2,\infty)$ such that  $1+\frac{1}{q}=\frac{1}{r_1}+\frac{1}{r_2}=\frac{1}{q_1}+\frac{1}{q_2}$, $2\ell q_2<1$ and $\frac{1}{r_1}+\frac{2\ell}{3}<1<\ell+\frac{2}{r_1}$, as in \eqref{conv-1}, we get from the Young inequality,
	\[\begin{split}
	\||x|^\ell\ff\|_{L^q}&\lesssim \||x|^\ell k\|_{L^{r_1}}\|\ff\|_{L^{r_2(p-1)}}^{p-1}
	+
	\|k\|_{L^{q_1}}\||x|^\frac{\ell}{p-1} \ff\|_{L^{q_2(p-1)}}^{p-1}<+\infty,
	\end{split}\]
where we used Theorem \ref{regularity}, Lemma \ref{kernel-properties1}, and Lemma \ref{decay-x}.
 The restrictions on $q$ and $\ell$  come  from
	\[
1+\frac{1}{2}>	1+\frac{1}{q}=\frac{1}{q_1}+\frac{1}{q_2}>\frac{1}{2}+2\ell,
	\quad
	1+\frac{1}{q}=\frac{1}{q_1}+\frac{1}{q_2}<1+\frac{1}{2}.
	\]
	In particular, $|x|^\ell\ff\in L^\infty(\rrt)$, if $0\leq\ell<1/4$.\\

\noindent {\bf Step 2.} We now show that $|x|^\ell \ff\in L^{q}(\rrt)$ for any $\max\{1,\frac{2}{p-1}\}=:\overline{q}<q\leq \infty$ and $0\leq2\ell<\frac{p}{2}+\frac{1}{q}$. In fact, by choosing $r_1,r_2>1$, as in Step 1 and $q_1\in(1,2)$ and $q_2\in(\overline{q},\infty)$  such that  $1+\frac{1}{q}=\frac{1}{q_1}+\frac{1}{q_2}$ and $\frac{2\ell}{p-1}<\frac{1}{2}+\frac{1}{q_2(p-1)}$, we deduce
	\[\begin{split}
	\||x|^\ell\ff\|_{L^q}\lesssim \||x|^\ell k\|_{L^{r_1}}\|\ff\|_{L^{r_2(p-1)}}^{p-1}
	+
	\|k\|_{L^{q_1}}\||x|^\frac{\ell}{p-1} \ff\|_{L^{q_2(p-1)}}^{p-1}<+\infty,
	\end{split}\]
	where now to see that the  last term in the above inequality is finite we used the result in Step 1. 	The restrictions on $q$ and $\ell$ follows as in Step 1.
	In particular, $|x|^\ell\ff\in L^\infty(\rrt)$, if $0\leq\ell<p/4$.\\
	
\noindent {\bf Step 3.} We claim that if $p$ satisfies $p(p-1)>5$ then $|x|^{3/2}\ff\in L^\infty$ and the proof of the theorem is completed in this case. Indeed, by choosing  $q_1\in(1,2)$ and $q_2>2$  such that  $1=\frac{1}{q_1}+\frac{1}{q_2}$, we write
	\[
	\||x|^{3/2}\ff\|_{L^\infty}\lesssim \||x|^{3/2} k\|_{L^{\infty}}\|\ff\|_{L^{p-1}}^{p-1}
	+
	\|k\|_{L^{q_1}}\||x|^\frac{3}{2(p-1)} \ff\|_{L^{q_2(p-1)}}^{p-1}<+\infty
\]
The last term in the above inequality is finite in view of Step 2. We point out that conditions on $q$ and $\ell$ in Step 2, is equivalent to $0\leq3<\frac{p(p-1)}{2}+\frac{1}{q_2}$, which holds because
$$
3=\frac{5}{2}+\frac{1}{2}<\frac{p(p-1)}{2}+\frac{1}{q_2}.
$$
This establishes Step 3.\\

Assume from now on that $p$ satisfies $p_0\leq p\leq p_1$, where $p_1$ is the positive root of $p(p-1)=5$.\\

\noindent {\bf Step 4.} We show that $|x|^\ell \ff\in L^{q}(\rrt)$ for any $1<q\leq \infty$ and $0\leq2\ell<\frac{1}{2}+\frac{p(p-1)}{2}+\frac{1}{q}$. Indeed, in order to apply the results in Step 2, we  choose $r_1,r_2>1$, as in Step 1 and $q_1\in(1,2)$ and $q_2\in(1,\infty)$  such that  $1+\frac{1}{q}=\frac{1}{q_1}+\frac{1}{q_2}$ and $\frac{2\ell}{p-1}<\frac{p}{2}+\frac{1}{q_2(p-1)}$. Consequently,
	\[\begin{split}
	\||x|^\ell\ff\|_{L^q}&\lesssim \||x|^\ell k\|_{L^{r_1}}\|\ff\|_{L^{r_2(p-1)}}^{p-1}
	+
	\|k\|_{L^{q_1}}\||x|^\frac{\ell}{p-1} \ff\|_{L^{q_2(p-1)}}^{p-1}<+\infty.
	\end{split}\]
	In particular, $|x|^\ell\ff\in L^\infty(\rrt)$, if $0\leq\ell<\frac{1}{4}+\frac{p(p-1)}{4}=:\ell_0$. Note that $\ell_0<3/2$, which is expected at this stage.\\

\noindent {\bf Step 5.} We finally show that $|x|^{3/2}\ff\in L^\infty(\rrt)$ if $p\in[p_0,p_1]$. In fact, choosing  $q_1\in(1,2)$ and $q_2>2$  satisfying $1=\frac{1}{q_1}+\frac{1}{q_2}$, we get
	\[\begin{split}
	\||x|^{3/2}\ff\|_{L^\infty}&\lesssim \||x|^{3/2} k\|_{L^{\infty}}\|\ff\|_{L^{p-1}}^{p-1}
	+
	\|k\|_{L^{q_1}}\||x|^\frac{3}{2(p-1)} \ff\|_{L^{q_2(p-1)}}^{p-1}<+\infty.
	\end{split}\]
To use Step 4 in order to see that last term is finite, we need to check that $ 3<\frac{p-1}{2}+\frac{p(p-2)^2}{2}+\frac{1}{q_2}$. But note that such a inequality holds trivially if we replace $p$ by $p_0$. Thus the result follows because $p\geq p_0$.

The proof of the theorem is thus completed.
\end{proof}

\begin{remark}
It is worth noting that the solitary wave solution $\ff\in\z$ cannot belong to $L^1(\rrt)$, since $\hat k$ is not continuous at the origin (see \eqref{conv-form}).
\end{remark}

We finish this section with an additional decay property.

\begin{theorem}
	Any nontrivial solitary wave of \eqref{shrira-0} satisfies $\ff\in L_y^rL_x^q(\rrt)\cap L_x^q L_y^r(\rrt)$ for all $1\leq q,r\leq\infty$ satisfying
	\begin{equation}\label{qr-conditions}
	\frac{1}{r}+\frac{1}{q}>1\quad\mbox{and}\quad 	\frac{1}{r}+\frac{2}{q}<3.
	\end{equation}
	In particular $\ff\in L^q_yL^1_x(\rrt)\cap L^1_xL^q_y(\rrt)\cap L^q_xL^1_y(\rrt)\cap L^1_yL^q_x(\rrt)$ for any $1<q\leq\infty$.
\end{theorem}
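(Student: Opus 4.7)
The plan is to base the proof on the pointwise decay estimates for $\ff$ already established in this section. From the preceding theorem on spatial decay in the $x$ variable together with Theorem \ref{inf-decay-y} at $\kappa=3$, we have, uniformly in $(x,y)\in\rrt$,
\[
|\ff(x,y)| \leq C(1+|x|)^{-3/2} \qquad \text{and} \qquad |\ff(x,y)| \leq C(1+|y|)^{-3}.
\]
Combining the two via the elementary inequality $\min(a,b) \leq a^{1-\alpha}b^{\alpha}$ (valid for $a,b>0$ and $\alpha\in[0,1]$) yields the factorized pointwise bound
\[
|\ff(x,y)| \leq \frac{C}{(1+|x|)^{3(1-\alpha)/2}(1+|y|)^{3\alpha}}, \qquad \alpha\in[0,1].
\]
This is the only input I would draw from the earlier results in this section.

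Because the right-hand side is a pure tensor product in $x$ and $y$, both mixed norms of interest collapse to the same product of two one-dimensional integrals:
\[
\|\ff\|_{L^r_yL^q_x},\ \|\ff\|_{L^q_xL^r_y} \leq C\left\|(1+|x|)^{-3(1-\alpha)/2}\right\|_{L^q(\rr)}\cdot \left\|(1+|y|)^{-3\alpha}\right\|_{L^r(\rr)}.
\]
I am thus reduced to choosing $\alpha$ so that both factors are finite. The first factor is finite iff $3(1-\alpha)q/2>1$, and the second iff $3\alpha r>1$; equivalently, $\alpha$ must lie in the open interval $\bigl(\tfrac{1}{3r},\,1-\tfrac{2}{3q}\bigr)$. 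Such an $\alpha$ exists precisely when $\tfrac{1}{3r}<1-\tfrac{2}{3q}$, i.e., when $\tfrac{1}{r}+\tfrac{2}{q}<3$, which is the second condition of \eqref{qr-conditions}. Endpoint cases $r=\infty$ or $q=\infty$ are handled by substituting the trivial sup-bound (equal to $1$) for the corresponding factor, with the open inequality in $\alpha$ replaced by the natural strict version.

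The ``in particular'' statement then follows by specialization: for $\ff\in L^q_yL^1_x$, taking outer exponent $q$ and inner exponent $1$ reduces the binding condition in \eqref{qr-conditions} to $q>1$; the three remaining spaces are handled symmetrically. The main (and only) obstacle is selecting the interpolation parameter $\alpha$, which amounts to a single linear feasibility check. Note in passing that the first condition $\tfrac{1}{r}+\tfrac{1}{q}>1$ is not actually used in this argument, so the decay-based approach already covers the full half-plane $\tfrac{1}{r}+\tfrac{2}{q}<3$ (with $r,q\geq 1$); the more restrictive hypothesis \eqref{qr-conditions} would be the natural outcome of an alternative proof based on mixed-norm Young's inequality applied to $\ff=k\ast f(\ff)$, where the kernel bound on $\|k\|_{L^{r_1}_yL^{q_1}_x}$ carries the additional integrability condition.
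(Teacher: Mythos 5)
Your proof is correct, but it takes a genuinely different route from the paper's. The paper's (one-line) proof deduces the statement from the membership $k\in L_y^rL_x^q(\rrt)\cap L_x^qL_y^r(\rrt)$ of the kernel in \eqref{conv-form} under conditions \eqref{qr-conditions}, i.e., from a mixed-norm Young inequality applied to $\ff=k\ast f(\ff)$ with $f(\ff)\in L^1(\rrt)$ (the latter following from Theorem \ref{regularity}, since $|f(\ff)|\lesssim|\ff|^{p-1}$); your closing remark correctly identifies this alternative, and the two inequalities in \eqref{qr-conditions} are indeed the integrability conditions on $k$. You instead interpolate the two pointwise bounds $|\ff|\lesssim(1+|x|)^{-3/2}$ and $|\ff|\lesssim(1+|y|)^{-3}$ (both genuinely available from the preceding decay theorems together with $\ff\in L^\infty$) into a tensor-product majorant and integrate; the feasibility check $\alpha\in\bigl(\tfrac{1}{3r},1-\tfrac{2}{3q}\bigr)$ is exactly right. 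The trade-offs are worth recording. Your route uses only the second condition $\tfrac1r+\tfrac2q<3$, so it proves a strictly stronger statement and, in particular, legitimately reaches the $q=\infty$ endpoint of the ``in particular'' clause (e.g.\ $L^\infty_yL^1_x$), which a literal application of \eqref{qr-conditions} would exclude since $\tfrac1r+\tfrac1q=1$ there. On the other hand, your argument consumes the full strength of Theorem \ref{inf-decay-y} and of the spatial decay in $x$, and therefore inherits the standing hypotheses of this subsection ($f\in C^1$ and $p\geq p_0=(3+\sqrt5)/2$ from Lemma \ref{35lemma} onward), whereas the kernel/Young route needs only the regularity theory and so is in principle available for the whole range $p\in(2,4)$.
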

\begin{proof}
	The proof is deduced from the fact $k\in  L_y^rL_x^q(\rrt)\cap L_x^q L_y^r(\rrt)$ under conditions \eqref{qr-conditions}.
\end{proof}

\section{Appendix} \label{sec4}

An important question concerning traveling-wave solutions one can ask is about their positivity. In this short appendix we verify that under suitable vanishing conditions at infinity, positive solitary waves do not exist. 

\begin{proposition}[Nonexistence of positive solitary waves]
Suppose that $f$ does not change the sign. Then there is no positive solitary wave solution $\ff$ of \eqref{shrira-0} satisfying
\begin{gather}
\ff\to0,\qquad\mbox{as}\quad |(x,y)|\to+\infty,\label{1-c}\\
\h\ff_x\to0,\qquad\mbox{as}\quad |x|\to+\infty,\label{2-c}\\
\h\ff\to0,\qquad\mbox{as}\quad |y|\to+\infty.\label{3-c}
\end{gather}
\end{proposition}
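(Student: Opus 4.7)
My plan is to argue by contradiction: assuming such a positive solitary wave $\ff$ exists, I will deduce $\ff\equiv 0$. The centerpiece is the convolution representation $\ff=k\ast f(\ff)$ of \eqref{conv-form}, whose symbol $\hat k(\xi,\eta)=|\xi|/(c|\xi|+\xi^2+\eta^2)$ vanishes identically on the line $\{\xi=0,\,\eta\neq 0\}$. This vanishing will ultimately force $\int_\rr \ff(x,y)\,\dd x$ to be identically zero, contradicting the pointwise positivity of $\ff$.

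Set $H(y):=\int_\rr \ff(x,y)\,\dd x$. The first step is to check that $H$ has good properties. Using the decay estimates from Section \ref{sec3}, in particular $|x|^{3/2}\ff\in L^\infty(\rrt)$ and $|y|^3\ff\in L^\infty(\rrt)$, a splitting of the $x$-integral into $|x|\leq|y|^2$ and $|x|>|y|^2$ yields $H(y)\lesssim\min(1,|y|^{-1})$. Hence $H$ is continuous, bounded, nonnegative (because $\ff>0$), belongs to $L^p(\rr)$ for every $1<p\leq\infty$, and tends to $0$ as $|y|\to\infty$.

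Next I take the 2D Fourier transform of $\ff=k\ast f(\ff)$ and restrict to the line $\xi=0$. Thanks to the pointwise bound $|f(\ff)|\lesssim |\ff|^{p-1}$, the sign hypothesis on $f$, and the decay $\ff\lesssim|x|^{-3/2}$, both $\ff(\cdot,y)$ and $f(\ff)(\cdot,y)$ lie in $L^1_x$ for every $y$. Fubini then identifies $\widehat\ff(0,\eta)$ with the $1$D Fourier transform $\widehat H(\eta)$, and similarly $\widehat{f(\ff)}(0,\eta)$ with $\widehat J(\eta)$ where $J(y):=\int f(\ff)(x,y)\,\dd x$. Combining with $\hat k(0,\eta)=0$ for $\eta\neq 0$, the identity $\widehat\ff=\hat k\,\widehat{f(\ff)}$ gives $\widehat H(\eta)=0$ for every $\eta\neq 0$. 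Thus $\widehat H$ is a tempered distribution supported at the origin, so $H$ is a polynomial; combined with $H\in C_0(\rr)$, this forces $H\equiv 0$. However, $\ff$ is continuous on $\rrt$ (by Remark \ref{vanizero}) and strictly positive, so $H(y)>0$ for every $y$---a contradiction.

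The step I expect to be the main technical obstacle is the rigorous identification $\widehat H(\eta)=\widehat\ff(0,\eta)$ together with the evaluation of the convolution identity at $\xi=0$: a priori $\widehat\ff$ is only a tempered distribution (recall from the remark at the end of Section \ref{sec3} that $\ff\notin L^1(\rrt)$), and the multiplier $\hat k$ is discontinuous at the origin, so the trace of $\widehat\ff$ along $\xi=0$ must be handled carefully. The decay estimates from Section \ref{sec3} are exactly what is needed to circumvent this obstruction via Fubini, reducing the Fourier manipulation to an honest $1$D Fourier transform of $H$.
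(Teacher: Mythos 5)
Your strategy is genuinely different from the paper's, and as written it has two real gaps. The paper obtains the identity \eqref{van}, $\int_\rr\h\ff\,\dd x=0$, by integrating equation \eqref{shrira-0} in $x$ --- this is precisely where hypotheses \eqref{2-c} and \eqref{3-c} enter --- and then contradicts it by showing that the kernel $\h k$ in the representation $\h\ff=\h k\ast f(\ff)$ has a definite sign, via an explicit formula for $\h k$. Your argument never uses \eqref{2-c} or \eqref{3-c}; instead it imports the decay theorems of Section \ref{sec3} (namely $|x|^{3/2}\ff\in L^\infty$ and $|y|^3\ff\in L^\infty$), which are proved only under the standing hypotheses of that section: $f\in C^1$, $|f(u)|\leq C|u|^{p-1}$ with $p\in(p_0,4)$, $p_0=(3+\sqrt5)/2$, and $c=1$. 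None of these appears in the statement of the Proposition, so what you establish is a different, conditional result rather than the Proposition as stated.

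The second gap is the Fourier restriction step, which you correctly flag as the main obstacle but do not actually close. Your own estimate $H(y)\lesssim\min(1,|y|^{-1})$ shows $H\notin L^1(\rr)$, so $\widehat H$ is only an $L^2$/distributional object, $\widehat\ff(\xi,\eta)$ is not given by an absolutely convergent iterated integral, and the almost-everywhere identity $\widehat\ff=\widehat k\,\widehat{f(\ff)}$ cannot be evaluated on the null line $\{\xi=0\}$ without first proving that both sides extend continuously to it; Fubini alone does not deliver this. The idea can be rescued entirely on the physical side: one checks that $\int_\rr k(x,y)\,\dd x=0$ for every $y\neq0$, since $k(\cdot,y)\in L^1_x$ makes the partial transform $\mathcal{F}_xk(\xi,y)=c_0\,|\xi|\,(|\xi|+\xi^2)^{-1/2}e^{-\sqrt{|\xi|+\xi^2}\,|y|}$ continuous in $\xi$ with limit $0$ as $\xi\to0$. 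Applying Fubini directly to $H(y)=\int_\rr\bigl(k\ast f(\ff)\bigr)(x,y)\,\dd x$ from \eqref{conv-form} --- the interchange being justified by the single sign of $f(\ff)$ together with the decay of $k$ and $\ff$ --- then gives $H\equiv0$ and the desired contradiction with positivity, with no two-dimensional Fourier analysis at all. Either repair your restriction step along these lines, or switch to the paper's shorter route through \eqref{van}, which has the advantage of using only the hypotheses actually listed in the Proposition.
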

\begin{proof}
It is straightforward to see that if $\ff$ is a nontrivial solution of \eqref{shrira-0} satisfying \eqref{1-c}-\eqref{3-c}, then
\begin{equation}\label{van}
\int_\rr\h\ff(x,y)\;\dd x=0.
\end{equation}
On the other hand, $\h\ff=\h k\ast f(\ff)$, where
\[
\widehat{\h k}(\xi,\eta)=-\ii\frac{\xi}{|\xi|+\xi^2+\eta^2}.
\]
By an argument similar to Lemma \ref{d-kernel}, there holds
\[
\h k(x,y)=\sqrt{\pi}\int_0^{+\infty}t^{5/2}\ee^{-t}
\left(t^2x^2+\left(t^2+y^2\right)^2\right)^{-\frac{3}{2}}
\sin\left(\frac{3}{2}\arctan\left(\frac{t|x|}{t^2+y^2}\right)\right)\;\dd t.
\]
The function $\h k$ does not change the sign, since
\[
\sin(\frac{3}{2}\arctan(x))=\frac{\sqrt{2}}{2}\frac{(1+(1+x^2)^{1/2})^{1/2}}{(1+x^2)^{3/4}}\left(2+(1+x^2)^{1/2}\right)>0.
\]
The proof then follows because if $\ff$ is positive, $\h\ff=\h k\ast f(\ff)$ has a definite sign, contradicting \eqref{van}.
\end{proof}

\section*{Acknowledgment}
The second author is partially supported by CNPq-Brazil and FAPESP-Brazil. The authors would like to thanks F.H. Soriano for the helpful discussion concerning the construction of the extension operator.


\begin{thebibliography}{00}

\bibitem{ass}
 L.A. Abramyan, Yu.A. Stepanyants, V.I. Shrira,
 Multidimensional solitons in shear flows of the boundary-layer type,
 Sov. Phys. Dokl. 37   (1992), 575--578.

 \bibitem{ABLS} J. Angulo, J. L. Bona, F. Linares, and M. Scialom,
 Scaling, stability and singularities for nonlinear, dispersive wave
 equations: the critical case, Nonlinearity  15  (2002), 759--786.

\bibitem{ar}
 A. Ambrosetti, P.H. Rabinowitz,
Dual variational methods in critical point
theory and applications,
J. Funct. Anal.  14 (1973)  49--381.


\bibitem{bl}
J.L. Bona and Y.A. Li, Decay and analyticity of solitary waves, J. Math. Pures Appl.,
76 (1997), 377--430.

\bibitem{bergh}
 J. Bergh and J. L\"ofstr\"om,
 Interpolation Spaces, An introduction,
  Springer-Verlag, New York, 1979.



 \bibitem{dbs1}
 A. de Bouard and    J.-C. Saut,
  {Solitary waves of generalized
  Kadomtsev-Petviashvili equations},
   Ann. Inst. H. Poincar\'e Anal. Non Lin\'eaire  14 (1997), 211--236.


\bibitem{chan}
S. Chandler-Wilde, D.P Hewett, and A. Moiola,
Interpolation of Hilbert and Sobolev spaces: quantitative estimates and counterexamples,
Mathematika 61 (2015), 414--443.

\bibitem{cp1} A. Cunha and A. Pastor,
The IVP for the Benjamin-Ono-Zakharov-Kuznetsov equation in weighted Sobolev spaces,   { J. Math. Anal. Appl.} 417 (2014), 660--693.

\bibitem{cp2}
A. Cunha and A. Pastor,
The IVP for the Benjamin-Ono-Zakharov-Kuznetsov equation in low regularity Sobolev spaces, J. Differential Equations 261 (2016), 2041--2067.


\bibitem{duoandikoetxea}
J. Duoandikoetxea,
Fourier analysis,
Transl. and revised from the 1995 Spanish original by D. Cruz-Uribe, Grad. Stud. Math. 29. Providence (RI): Amer. Math. Soc., 2001.


\bibitem{erd}
A. Erd\'elyi, W. Magnus, F. Oberhettinger, and F.G Tricomi,
Tables of integral transforms, Vol. I, McGraw-Hill Book Company, 1954.


\bibitem{jpha}
A. Esfahani,
Decay properties of the traveling waves of the rotation-generalized Kadomtsev-Petviashvili equation,
J. Phys. A: Math. Theor. 43 (2010), 395201.


 \bibitem{ep-ill}
 A. Esfahani, A. Pastor,
  Ill-posedness results for the (generalized) Benjamin-Ono-Zakharov-Kuznetsov equation,
  Proc. Amer. Math. Soc. 139 (2011), 943--956.

  \bibitem{EP1} A. Esfahani and A. Pastor, Instability of solitary wave solutions for the generalized BO-ZK equation, { J. Differential Equations} 247 (2009), 3181--3201.


  \bibitem{EP3} A. Esfahani and A. Pastor, On the unique continuation property for Kadomtsev-Petviashvili-I
  and Benjamin-Ono-Zakharov-Kuznetsov equations, { Bull. London Math. Soc.}
  43 (2011), 1130--1140.

  \bibitem{epb}
  A. Esfahani, A. Pastor, and  J.L.Bona,
  Stability and decay properties of solitary-wave solutions to the generalized BO-ZK equation,
 Adv. Differential Equations 20 (2015), 801--834.

 \bibitem{GZ} D. G. Gaidashev and S. K. Zhdanov, { On the transverse instability of the two-dimensional
 	Benjamin-Ono solitons}, Phys. Fluid { 16} (2004), 1915--1921.

  \bibitem{hmoli} H. Hajaiej, L. Molinet, T. Ozawa, B. Wang, Necessary and sufficient conditions for the fractional Gagliardo--Nirenberg inequalities and applications to Navier--Stokes and generalized boson equations, in: Harmonic Analysis and Nonlinear Partial Differential Equations, in: RIMS K\^oky\^uroku Bessatsu, vol. B26, Res. Inst. Math. Sci. (RIMS), Kyoto, 2011, 159--175.



\bibitem{kato-pipolo}
K. Kato, P.N. Pipolo,
Analyticity of solitary wave solutions to generalized Kadomtsev-Petviashvili equations,
Proc. Royal Soc. Edinburgh 131A (2001), 391--424.

\bibitem{mclean} W. McLean, Strongly Elliptic Systems and Boundary Integral Equations, Cambridge University Press, 2000.



\bibitem{Latorre}
J. C. Latorre, A. A. Minzoni, N. F. Smyth, and C.A. Vargas,
Evolution of Benjamin-Ono solitons in
the presence of weak Zakharov-Kuznetsov lateral dispersion,
{ Chaos.} 16 (2006), 043103.


\bibitem{lizorkin}
P.I. Lizorkin,
 {Multipliers of Fourier integrals},
 Proc. Steklov Inst. Math., Vol. {89}  (1967), 269--290.

 \bibitem{lopez}
 G.P. L\'opez and F.H. Soriano,
 On the existence and analyticity of solitary waves solutions to a two-dimensional Benjamin-Ono equation,   	arXiv:1503.04291.

\bibitem{maris}
M. Mari\c{s}, On the existence, regularity and decay of solitary waves to a generalized Benjamin-Ono equation, Nonlinear Analysis 51 (2002), 1073--1085.

\bibitem{pelinovsky-shrira}
D.E. Pelinovsky, V.I. Shrira,
Collapse transformation for self-focusing solitary waves in boundary-layer type shear flows,
Phys. Lett. A 206 (1995), 195--202.

\bibitem{pe-st}
D.E. Pelinovsky, Yu.A. Stepanyants,
Self-focusing instability of nonlinear plane waves in shear flows,
 Sov. Phys. JETP 78 (1994), 883--891.

 \bibitem{rb}
F. Ribaud and S. Vento,
Local and global well-posedness results for the Benjamin-Ono-Zakharov-Kuznetsov equation,
Discrete Contin. Dyn. Syst. 37 (2017), 449--483.

\bibitem{riesz}
M. Riesz,
 {Sur les fonctions conjugu\'{e}es},
 Math. Z.  {27}  (1927), 218--244.

\bibitem{shrira}
V.I. Shrira,
On surface waves in the upper quasi-uniform ocean layer,
 Dokl. Akad. Nauk SSSR 308 (1989), 732--736.

 \bibitem{triebel}
H. Triebel,
Interpolation theory, Function spaces, Differential operators, North-Holland Mathematical Library, 18, North-Holland Publishing Co., Amsterdam-New York, 1978.



\bibitem{wi}
M. Willem,
Minimax Theorems,
Birkh\"auser, 1996.

\bibitem{za}
I. Zaiter,
Solitary waves of the two-dimensional Benjamin equation,
Adv. Differential Equations 14 (2009), 835--834.

\end{thebibliography}
\end{document}